\tikzstyle{none}=[]
\tikzstyle{base}=[circle, fill=black!12, draw, inner sep=0pt, minimum width=8pt, minimum height=8pt, line width=0.5pt]
\tikzstyle{wideBase}=[circle, fill=black!12, draw, inner sep=0pt, minimum width=12pt, minimum height=12pt, line width=0.5pt]
\tikzstyle{dashStyle}=[-, dashed]
\tikzstyle{dotStyle}=[-,dotted]
\tikzstyle{baseEdge}=[-, draw=black, line width=2pt]
\tikzstyle{wideLine}=[-, draw=black, line width=2pt]
\tikzstyle{arrow}=[->, draw=black, line width=2pt]
\tikzstyle{slimArrow}=[->, draw=black, line width=1pt]
\tikzstyle{nodeLabel}=[shape=rectangle, fill=white, minimum width=8pt, minimum height=8pt, inner sep=0pt]
\definecolor{softgreen}{HTML}{2cab27}
\definecolor{sanguine}{HTML}{eb5a21}
\definecolor{softyellow}{HTML}{e6e337}
\definecolor{coolpink}{RGB}{228, 127, 226}
\definecolor{coolgreen}{RGB}{127, 211, 125}
\definecolor{coolbrown}{RGB}{204, 174, 161}
\definecolor{coolorange}{RGB}{248, 185, 126}
\definecolor{coolred}{RGB}{255, 127, 125}
\title{Directed hypergraph connectivity augmentation\\ by hyperarc reorientations}
\author{Moritz M\"uhlenthaler, Benjamin Peyrille, Zolt\'an Szigeti}
\date{\today}
\newtheorem{theorem}{Theorem}
\newtheorem{lemma}[theorem]{Lemma}
\newtheorem{claim}{Claim}[theorem]
\newtheorem{gclaim}{Claim}
\crefname{lemma}{Lemma}{Lemmas}
\crefname{theorem}{Theorem}{Theorems}
\crefname{corollary}{Corollary}{Corollaries}
\crefname{definition}{Definition}{Definitions}
\crefname{conjecture}{Conjecture}{Conjectures}
\crefname{figure}{Figure}{Figures}
\crefname{algorithm}{Algorithm}{Algorithms}
\crefname{remark}{Remark}{Remarks}
\crefname{claim}{Claim}{Claims}
\crefname{gclaim}{Claim}{Claims}
\crefname{section}{Section}{Sections}
\newenvironment{claimproof}
  {\begin{proof}}
  {\end{proof}}
\newcommand{\Tight}[1]{\ensuremath{\mathcal{T}_#1}}
\newcommand{\Dangerous}[1]{\ensuremath{\mathcal{D}_#1}}
\newcommand{\Minimal}[1][{}]{\ensuremath{\mathcal{M}_#1}}
\newcommand{\R}{\ensuremath{\mathcal{R}}}
\newcommand{\Hg}{\ensuremath{\mathcal{H}}}
\newcommand{\cev}[1]{\reflectbox{\ensuremath{\vec{\reflectbox{\ensuremath{#1}}}}}}
\begin{document}


\maketitle

\begin{abstract}
The orientation theorem of Nash-Williams states that an undirected graph admits a $k$-arc-connected orientation if and only if it is $2k$-edge-connected. Recently, 
Ito et al.~\cite{ito2022monotone} showed that any orientation of an undirected $2k$-edge-connected graph can be transformed into a $k$-arc-connected orientation by reorienting one arc at a time without decreasing the arc-connectivity at any step, thus providing an algorithmic proof of Nash-Williams' theorem. 
We generalize their result to hypergraphs and therefore provide an algorithmic proof of the characterization of hypergraphs with a $k$-hyperarc-connected orientation originally given by Frank et al.~\cite{frank2003orientation}. We prove that any orientation of an undirected $(k,k)$-partition-connected hypergraph can be transformed into a $k$-hyperarc-connected orientation by reorienting one hyperarc at a time without decreasing the hyperarc-connectivity in any step. Furthermore, we provide a simple combinatorial algorithm for computing such a transformation in polynomial time.
\end{abstract}

\section{Introduction}

An undirected (resp.,~directed) graph is \emph{$k$-edge-connected} (resp., $k$-arc-connected) if every ordered pair of vertices is connected by at least $k$ edge-disjoint (resp.,~arc-disjoint) paths.
For any undirected graph we may obtain an \emph{orientation} of that graph by orienting each of its edges in one of the two possible ways.
In a classic theorem, Nash-Williams~\cite{nash1960orientations} showed that an undirected graph admits a $k$-arc-connected orientation if and only if it is $2k$-edge-connected. Frank \cite{frank1982note} introduced reorientations of undirected graphs and proved that any two $k$-arc-connected orientations of a given undirected graph can be transformed into one another by reorienting directed cycles and directed paths. Recently, Ito et al.~\cite{ito2022monotone} showed that any orientation of an undirected $2k$-edge-connected graph can be transformed into a $k$-arc connected orientation by changing the orientation of a single arc in each step. Furthermore, the arc-connectivity of each orientation is at least as high as that of the previous orientations. Their result gives an algorithmic proof of the non-trivial direction of Nash-Williams' theorem. In fact, Ito et al.~showed such a transformation of length $kn^3$ can be computed in polynomial time, where $n$ is the number of vertices of the input graph. 

We propose a generalization of the result of Ito et al.~to hypergraphs.  
An undirected hypergraph is $(k,k)$-partition connected if for any partition $\mathcal{P} = \{V_1, \ldots, V_t\}$ of the vertex set, the number of hyperedges intersecting at least two members of $\mathcal{P}$ is at least $kt$  (see Section \ref{sec:preliminaries} for other definitions).
Frank et al.~\cite{frank2003orientation} generalized Nash-Williams' theorem by showing that
an undirected hypergraph admits a $k$-hyperarc-connected orientation if and only if it is $(k,k)$-partition connected.
We prove that any orientation of an undirected $(k,k)$-partition-connected hypergraph can be transformed into a $k$-hyperarc-connected orientation by reorienting one hyperarc at a time without decreasing the hyperarc-connectivity at any step.
Furthermore, such a transformation of length at most $kn^3$ can be computed in time polynomial in the size of the input hypergraph. This provides an algorithmic proof for the characterization of hypergraphs that admit a $k$-hyperarc-connected orientations by Frank et al.~\cite{frank2003orientation}. To our knowledge, our result is the first efficient algorithm for computing a $k$-hyperarc-connected orientation of a hypergraph if one exists.

Our approach is similar to that of Ito et al.~\cite{ito2022monotone}: Starting from any orientation of the $(k, k)$-partition-connected input hypergraph, we repeatedly reorient hyperarcs along carefully chosen hyperpaths until a $k$-hyperarc-connected orientation is obtained. 
Our approach generalizes the work of Ito et al.~\cite{ito2022monotone}, provides a simpler proof of their result for graphs and explicitly gives an algorithm.

\section{Definitions}
\label{sec:preliminaries}

Let $\Hg := (V, \mathcal{E})$ be a \emph{hypergraph}, where $\mathcal{E}$ is a multiset of subsets of $V$. The elements of $\mathcal{E}$ are called \emph{hyperedges}.
The \emph{degree} $d_{\Hg}(X)$ of a set $X$ of vertices of $\Hg$ is the number of hyperedges $Z$ such that $Z$ intersects both $X$ and $V - X$.
The hypergraph $\Hg$ is \emph{$k$-hyperedge-connected} if for any non-empty proper subset $X$ of vertices, we have $d_{\Hg}(X) \geq k$. The \emph{hyperedge-connectivity} of $\Hg$, denoted by $\lambda(\Hg)$, is the maximum integer $k$ such that $\Hg$ is $k$-hyperedge-connected.

We can orient a hyperedge $X \in \mathcal{E}$ towards $v \in X$ to obtain a \emph{hyperarc} $(X - v, v)$, where $X-v$ is the set of \emph{tail vertices} of the hyperarc and $v$ is the \emph{head vertex} of the hyperarc. By orienting each edge of $\Hg$ we obtain an \emph{orientation} $\vec{\Hg} = (V, \mathcal{A})$ of $\Hg$ which is a \emph{directed hypergraph}. Our definition of a directed hypergraph coincides with the B-hypergraph of Gallo et al.~in~\cite{gallo1993directed}.
Let $X \subsetneq V$ with $X \neq \emptyset$.
The \emph{in-degree} $d^-_{\vec{\Hg}}(X)$ of $X$ is the number of hyperarcs $(Y, v)$ of $\vec{\Hg}$ such that the head vertex $v$ is in $X$ and at least one tail vertex of $Y$ is not contained in $X$, that is, $Y - X \neq \emptyset$.
The \emph{out-degree} $d^+_{\vec{\Hg}}(X)$ of $X$ is the number of hyperarcs $(Y, v)$ of $\vec{\Hg}$ such that $v \notin X$ and $X \cap Y \neq \emptyset$.
We use $d^-_{\vec{\Hg}}(v)$ for $d^-_{\vec{\Hg}}(\{v\})$ and $d^+_{\vec{\Hg}}(v)$ for $d^+_{\vec{\Hg}}(\{v\}).$
Note that $d^-_{\vec{\Hg}}(X) = d^+_{\vec{\Hg}}(V - X)$.
We say that $\vec{\Hg}$ is \emph{$k$-hyperarc-connected} if for any non-empty proper subset $X$ of vertices, we have $d^+_{\vec{\Hg}}(X)\ge k.$
The \emph{hyperarc-connectivity} of $\vec{\Hg}$, denoted by $\lambda(\vec{\Hg})$, is the maximum integer $k$ such that $\vec{\Hg}$ is $k$-hyperarc-connected.
When it is clear from the context what hypergraph and what orientation we are considering, we will omit the mention of $\Hg$ and $\vec{\Hg}$ from the degree functions.

\begin{figure}[ht]
\centering
\begin{tikzpicture}[scale=1.0]
	\begin{pgfonlayer}{nodelayer}
		\node [style=wideBase] (0) at (-3, 0) {};
		\node [style=wideBase] (1) at (-3, 3) {};
		\node [style=wideBase] (2) at (0, 3) {};
		\node [style=wideBase] (3) at (0, 0) {};
		\node [style=wideBase] (4) at (3, 0) {};
		\node [style=wideBase] (5) at (3, 3) {};
		\node [style=none] (6) at (-2, 2) {};
		\node [style=none] (7) at (-1.25, 0.5) {};
		\node [style=none] (8) at (2, 0.75) {};
	\end{pgfonlayer}
	\begin{pgfonlayer}{edgelayer}
		\draw [style=baseEdge, draw={coolorange!125}, in=-45, out=30] (0) to (6.center);
		\draw [style=baseEdge, draw={coolorange!125}, in=-120, out=-45] (6.center) to (2);
		\draw [style=arrow, draw={coolpink!125}, in=135, out=45, looseness=0.75] (1) to (2);
		\draw [style=baseEdge, draw={softyellow!125}, in=30, out=75, looseness=1.50] (3) to (7.center);
		\draw [style=baseEdge, draw={softyellow!125}, in=30, out=-180] (5) to (7.center);
		\draw [style=arrow, draw={softyellow!125}, in=0, out=-150, looseness=0.75] (7.center) to (0);
		\draw [style=arrow, draw={coolred!125}, in=-60, out=60] (4) to (5);
		\draw [style=baseEdge, draw={coolbrown!125}, in=135, out=-60, looseness=0.75] (2) to (8.center);
		\draw [style=baseEdge, draw={coolbrown!125}, in=60, out=135, looseness=1.25] (8.center) to (3);
		\draw [style=arrow, draw={coolbrown!125}, in=135, out=-45, looseness=0.75] (8.center) to (4);
		\draw [style=baseEdge, draw={coolbrown!125}, in=-150, out=135, looseness=1.25] (8.center) to (5);
		\draw [style=arrow, draw={coolorange!125}] (6.center) to (1);
	\end{pgfonlayer}
\end{tikzpicture}
\caption{Directed hypergraph with 5 hyperarcs}
\end{figure}
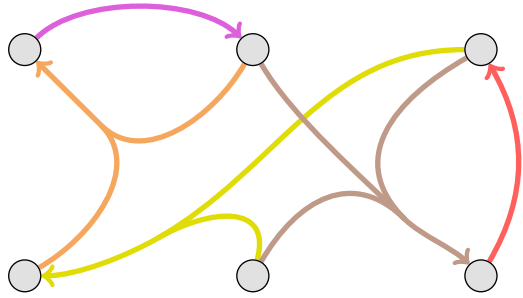

For $s,t\in V$, a  subset $X$ of $V$ that contains $s$ but not $t$ is called an \emph{$(s,t)$-separator}.
Let $\mathcal{P}$ be a partition of $V$. We denote by $e_{\Hg}(\mathcal{P})$ the number of hyperedges of $\Hg$ that intersect at least two members of $\mathcal{P}$.
The hypergraph $\Hg$ is \emph{$(k,k)$-partition-connected} if for any partition $\mathcal{P}$ of $V,$ we have $e_{\Hg}(\mathcal{P}) \geq k |\mathcal{P}|$. Applying $(k,k)$-partition-connectivity to any partition of size 2 shows that $(k,k)$-partition-connectivity implies $2k$-hyperedge-connectivity. The converse is not true: consider the hypergraph with vertex set  $\{a,b,c\}$ and two identical hyperedges $\{a,b,c\}$. This graph is $2$-hyperedge-connected, but not $(1,1)$-partition-connected. 

Let $P = (A_1, a_1), (A_2, a_2), \ldots, (A_{\ell}, a_{\ell})$ be a finite sequence of hyperarcs of $\vec{\Hg}$. Then $P$ is an \emph{$(s,t)$-hyperpath} if $s \in A_1$, $t = a_{\ell}$,   $a_i \in A_{i+1}$ for $i \in 1, \ldots, \ell-1$, and the vertices $a_i$ are all distinct. An example of an $(s, t)$-hyperpath is shown in Figure~\ref{fig:hyperpath-example}. 
If $s=t$ then the $(s,t)$-hyperpath is called a \emph{directed hypercycle}.
The \emph{trimming} of an $(s,t)$-hyperpath $P$ is a directed $(s, t)$-path $P'$ whose vertices are in the order $s, a_1, a_2, \ldots, a_{\ell-1}, t$.
We \emph{reorient} a hyperarc $(X, v)$ of $\vec{\Hg}$ towards $u \in X$, $u \neq v$, by replacing $(X,v)$ by the hyperarc $(X - u + v, u)$.

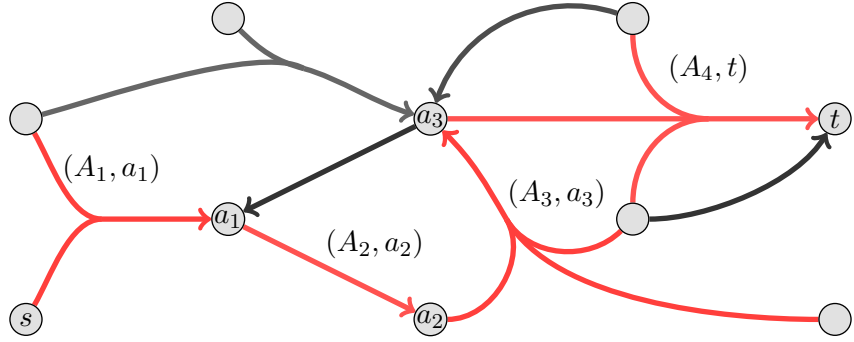
\begin{figure}
    \centering
\begin{tikzpicture}[scale=1.33]
	\begin{pgfonlayer}{nodelayer}
		\node [style=wideBase] (0) at (0, 0) {$s$};
		\node [style=wideBase] (1) at (2, 1) {$a_1$};
		\node [style=wideBase] (2) at (0, 2) {};
		\node [style=wideBase] (3) at (4, 2) {$a_3$};
		\node [style=wideBase] (4) at (2, 3) {};
		\node [style=wideBase] (5) at (4, 0) {$a_2$};
		\node [style=wideBase] (6) at (8, 2) {$t$};
		\node [style=wideBase] (7) at (6, 3) {};
		\node [style=wideBase] (8) at (6, 1) {};
		\node [style=wideBase] (9) at (8, 0) {};
		\node [style=none] (10) at (0.75, 1) {};
		\node [style=none] (11) at (0.85, 1.5) {$(A_1,a_1)$};
		\node [style=none] (12) at (4.75, 1) {};
		\node [style=none] (13) at (3.45, 0.75) {$(A_2,a_2)$};
		\node [style=none] (14) at (5.25, 1.25) {$(A_3,a_3)$};
		\node [style=none] (15) at (6.75, 2) {};
		\node [style=none] (16) at (6.75, 2.5) {$(A_4,t)$};
		\node [style=none] (17) at (2.75, 2.5) {};
	\end{pgfonlayer}
	\begin{pgfonlayer}{edgelayer}
		\draw [style=baseEdge, in=180, out=60, looseness=0.75, draw=coolred!150] (0) to (10.center);
		\draw [style=baseEdge, in=180, out=-60, looseness=0.75, draw=coolred!150] (2) to (10.center);
		\draw [style=arrow, in=180, out=0, draw=coolred!150] (10.center) to (1);
		\draw [style=arrow, , draw=coolred!135] (1) to (5);
		\draw [style=baseEdge, in=-60, out=-180, looseness=0.75, draw=coolred!150] (9) to (12.center);
		\draw [style=baseEdge, in=-60, out=0, draw=coolred!150] (5) to (12.center);
		\draw [style=baseEdge, in=-60, out=-135, draw=coolred!150] (8) to (12.center);
		\draw [style=arrow, in=-45, out=120, draw=coolred!150] (12.center) to (3);
		\draw [style=arrow, draw=coolred!135] (15.center) to (6);
		\draw [style=baseEdge, in=-180, out=-90, draw=coolred!135] (7) to (15.center);
		\draw [style=baseEdge, in=180, out=0, draw=coolred!135] (3) to (15.center);
		\draw [style=arrow, in=-120, out=0, looseness=0.75, draw=black!80] (8) to (6);
		\draw [style=baseEdge, in=150, out=15, looseness=0.50, draw=black!60] (2) to (17.center);
		\draw [style=baseEdge, in=165, out=-45, draw=black!60] (4) to (17.center);
		\draw [style=arrow, in=165, out=-15, looseness=0.75, draw=black!60] (17.center) to (3);
		\draw [style=arrow, draw=black!80] (3) to (1);
		\draw [style=arrow, bend right=45, draw=black!70] (7) to (3);
        \draw [style=baseEdge, in=180, out=90, draw=coolred!135] (8) to (15.center);
	\end{pgfonlayer}
\end{tikzpicture}
    \caption{$(s,t)$-Hyperpath in red}
    \label{fig:hyperpath-example}
\end{figure}

We say that a function $f$ is \emph{submodular} if for any two vertex sets $X,Y,$ we have $f(X) + f(Y) \geq f(X \cap Y) + f(X \cup Y)$.
Remark that each presented degree function is submodular (see \cite{frank2011connections}).
Two vertex sets $X, Y$ are said to be \emph{crossing} if $X \cap Y \neq \emptyset$, $X \cup Y \neq V$ and $X - Y \neq \emptyset \neq Y - X$. 

\section{Previous work}

The following fundamental result on orientations of undirected graphs is due to Nash-Williams~\cite{nash1960orientations}. The original proof of Nash-Williams is not algorithmic, but meanwhile many different proofs are known, including algorithmic ones~\cite{lovasz1979combinatorial}.

\begin{theorem}{{\cite{nash1960orientations}}}
\label{nash_williams_weak_orientation_theorem}
An undirected graph $G$ has a $k$-edge-connected orientation if and only if $G$ is $2k$-edge-connected.
\end{theorem}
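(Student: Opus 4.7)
The plan is to prove the easy direction directly from the degree identity, and then tackle the non-trivial direction by induction on $|V(G)|$ via the splitting-off technique. For necessity, suppose $\vec{G}$ is a $k$-arc-connected orientation of $G$. For every non-empty proper $X \subsetneq V$ we have both $d^+_{\vec{G}}(X) \geq k$ and $d^-_{\vec{G}}(X) = d^+_{\vec{G}}(V\setminus X) \geq k$, so $d_G(X) = d^+_{\vec{G}}(X) + d^-_{\vec{G}}(X) \geq 2k$, which means $G$ is $2k$-edge-connected.

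For the sufficiency direction I would induct on $|V(G)|$, the base case $|V|=1$ being trivial. For the induction step, fix any vertex $v$ and seek a pairing of the edges incident to $v$ such that replacing each pair $\{uv,vw\}$ by a single new edge $uw$ yields a graph $G'$ on $V\setminus\{v\}$ that is again $2k$-edge-connected. By the induction hypothesis, $G'$ admits a $k$-arc-connected orientation $\vec{G}'$. Lift this to $\vec{G}$ by orienting each original pair consistently with its split-off edge: if $uw$ was oriented $u\to w$ in $\vec{G}'$, orient the pair as $u\to v$ and $v\to w$. A direct cut-counting check then shows that $\vec{G}$ is $k$-arc-connected, using the inequality $d^+_{\vec{G}}(X) \geq d^+_{\vec{G}'}(X\setminus\{v\})$ whenever $X\setminus\{v\}$ is a non-empty proper subset of $V\setminus\{v\}$, and $d^+_{\vec{G}}(\{v\}) = d^-_{\vec{G}}(\{v\}) = d_G(v)/2 \geq k$ to take care of the singleton cuts $\{v\}$ and $V\setminus\{v\}$.

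The hard part will be establishing the splitting-off lemma itself: if $G$ is $2k$-edge-connected and $d_G(v)$ is even with $d_G(v) \geq 2$, then the edges at $v$ admit a complete pairing whose simultaneous splitting preserves $2k$-edge-connectivity on $V\setminus\{v\}$. I would attack this via submodularity of the cut function together with a careful analysis of \emph{dangerous sets}, namely sets $X \subseteq V\setminus\{v\}$ with $d_G(X) \leq 2k+1$: for any two edges $uv, u'v$ with $u \neq u'$, either the pair can be split off safely, or some dangerous set separates $u$ from $u'$, and an uncrossing argument between two such dangerous sets then produces a safe pair. Parity issues when $d_G(v)$ is odd are handled via the classical trick of adjoining an auxiliary vertex $s$ joined to every odd-degree vertex, running the argument in the auxiliary graph, and restricting the resulting orientation back to $G$.
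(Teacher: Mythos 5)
Your necessity argument is exactly the standard counting (it is the graph case of the computation the paper gives after \cref{thm:conn_augm_total_hypergraph}), and your sufficiency plan is a genuinely different route from the paper's: the paper never reproves Nash--Williams by splitting off, but obtains it as the graph specialization of its monotone reorientation theorem, whereas you propose the classical induction on $|V|$ via Lov\'asz's complete splitting at a vertex. The even-degree case of your induction is sound (the lifting inequalities $d^+_{\vec{G}}(X)\ge d^+_{\vec{G}'}(X\setminus\{v\})$ and $d^+_{\vec{G}}(\{v\})=d_G(v)/2\ge k$ are correct, and your dangerous-set sketch of the splitting lemma is the standard proof). The genuine gap is the parity step. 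The odd-degree case cannot be dodged by choosing another vertex: $K_{2k+2}$ is $2k$-edge-connected and every degree equals $2k+1$, so an induction that can only split at even-degree vertices must say something real here. The ``classical trick'' you invoke --- add an auxiliary vertex $s$ joined to all odd-degree vertices, argue in the auxiliary graph, restrict back --- does not deliver this: restricting back means deleting the arcs at $s$ (or the surrogate edges created by splitting off at $s$), and deleting arcs can push the out-degree of a cut below $k$; nothing in the construction prevents it. Moreover, in the reading where $s$ is kept during the induction, the auxiliary graph need not even be $2k$-edge-connected (the degree of $s$ is the number of odd vertices, possibly $2$), so the induction hypothesis does not apply to it.

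To see the failure concretely in the ``split off at $s$ first, then restrict'' reading: you obtain $G^*=G+F$ with all degrees even and $G^*$ $2k$-edge-connected, and any Eulerian (or merely $k$-arc-connected) orientation only guarantees $d^+_{G^*}(X)\ge k$, whence $d^+_G(X)\ge k+d_F(X)/2-d^+_F(X)$, which drops below $k$ whenever a tight cut of $G$ has more than half of its $F$-edges oriented outward. For instance, take $k=1$ and $G$ on $\{a,b,c,d\}$ with two parallel $ab$ edges, two parallel $cd$ edges, and single edges $ac$, $bd$: this is $2$-edge-connected with all degrees odd; with $F=\{ad,bc\}$ the orientation $a\to b$, $b\to a$, $c\to a$, $d\to b$, $c\to d$, $d\to c$, $a\to d$, $b\to c$ is Eulerian and strongly connected on $G+F$, yet its restriction to $G$ has $d^+(\{a,b\})=0$. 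So the auxiliary-vertex trick (which is fine in settings where one only needs the augmented graph) does not transfer the orientation back to $G$, and this is precisely where the real difficulty of the theorem lives; you would need an honest treatment of odd degrees --- e.g.\ Nash-Williams' own well-balanced orientation argument, Frank's orientation theorem, or a strengthened induction statement --- or, alternatively, the paper's route via $(k,k)$-partition-connectivity and \cref{KPP_hypergraph}, where no parity issue arises.
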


Since then, orientations of undirected graphs have been studied widely \cite{bernath2008recent,szigeti2010orientations,frank1980orientation}.

\medskip

In his seminal work~\cite{frank1982note}, Frank considered reorientations of $k$-arc-connected directed graphs.

\begin{theorem}{{\cite[Theorem 1]{frank1982note}}}
\label{thm:frank_reorientation_graphs}
    If $\vec{G}$ and $\vec{G}'$ are two $k$-arc-connected orientations of an undirected graph $G$ then there is a sequence $\vec{G} = \vec{G}_0, \vec{G}_1,\ldots,\vec{G}_{\ell}=\vec{G}'$ of $k$-arc-connected orientations of $G$ such that each $\vec{G}_i$ arises from $\vec{G}_{i-1}$ by reversing one directed cycle or directed path.
\end{theorem}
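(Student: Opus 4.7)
My plan is to prove the theorem by induction on the number $|D|$ of edges of $G$ on which $\vec{G}$ and $\vec{G}'$ disagree; the base case $|D|=0$ is immediate. For the inductive step, I will exhibit a single directed cycle or directed path in $\vec{G}$ whose reversal yields a $k$-arc-connected orientation $\vec{G}_1$ with strictly smaller $|D(\vec{G}_1, \vec{G}')|$, then apply the induction hypothesis.

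The main object of study is the directed subgraph $\vec{D}_G$ of $\vec{G}$ consisting of the arcs whose orientation is reversed in $\vec{G}'$. Reversing an arc of $\vec{D}_G$ in $\vec{G}$ strictly decreases $|D|$, so it suffices to choose a directed cycle or directed path \emph{contained in} $\vec{D}_G$ whose reversal preserves $k$-arc-connectivity. The central algebraic identity is that for every $X\subsetneq V$,
\[
 d^+_{\vec{G}'}(X)-d^+_{\vec{G}}(X)=d^-_{\vec{D}_G}(X)-d^+_{\vec{D}_G}(X),
\]
which, combined with the $k$-arc-connectivity of $\vec{G}'$, implies the key inequality $d^+_{\vec{D}_G}(X)\le d^-_{\vec{D}_G}(X)$ for every tight cut $X$ of $\vec{G}$ (one with $d^+_{\vec{G}}(X)=k$).

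If $\vec{D}_G$ contains a directed cycle $C$, I reverse $C$: a directed cycle contributes equally to arcs entering and leaving any vertex set, so every $d^+_{\vec{G}}(X)$ is preserved and $|D|$ drops by $|C|$. Otherwise $\vec{D}_G$ is acyclic and I look for a directed path $P$ in $\vec{D}_G$ from a source $v$ (with $d^+_{\vec{D}_G}(v)>d^-_{\vec{D}_G}(v)$) to a sink $u$ (with $d^-_{\vec{D}_G}(u)>d^+_{\vec{D}_G}(u)$). A short computation shows that reversing such a $P$ changes $d^+_{\vec{G}}(X)$ by $-1$ exactly when $v\in X$ and $u\notin X$, and by a non-negative amount otherwise; hence preservation of $k$-arc-connectivity is equivalent to choosing $(v,u)$ so that no tight cut of $\vec{G}$ contains $v$ and excludes $u$.

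The main obstacle is therefore to produce an \emph{admissible} source--sink pair $(v,u)$ that is additionally connected by a directed $v$-to-$u$ path inside $\vec{D}_G$. Admissibility will come from the submodularity of $d^+_{\vec{G}}$, which implies that the family of tight cuts of $\vec{G}$ is closed under intersection and union: if every source--sink pair were inadmissible, then for each source $v$ one could intersect the witnessing tight cuts over all sinks to obtain a tight set $Y_v\ni v$ disjoint from every sink, and the union $Y=\bigcup_{v}Y_v$ over all sources would be tight, would contain every source, and would exclude every sink, so that $\sum_{w\in Y}\bigl(d^+_{\vec{D}_G}(w)-d^-_{\vec{D}_G}(w)\bigr)>0$, contradicting the key inequality applied to $Y$. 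The most delicate step is to guarantee simultaneously that $u$ is reachable from $v$ in $\vec{D}_G$; my plan is to combine the admissibility argument with the reachability observation that the set $R_v$ of vertices reachable from $v$ in $\vec{D}_G$ satisfies $d^+_{\vec{D}_G}(R_v)=0$ and therefore must contain at least one sink, and then to restrict the intersection/union argument to the lattice of tight cuts whose trace on $R_v$ is itself tight, so that an admissible sink can be located inside $R_v$.
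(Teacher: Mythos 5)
Your algebraic identity, the key inequality $d^+_{\vec{D}_G}(X)\le d^-_{\vec{D}_G}(X)$ for tight cuts, and the cycle case are all correct, but the heart of your plan fails: when $\vec{D}_G$ is acyclic there need not exist \emph{any} directed path inside $\vec{D}_G$ whose reversal preserves $k$-arc-connectivity, so the induction on the number of disagreeing edges cannot even make its first step. Concretely, take $k=1$ and let $G$ have vertices $v,u,v',u'$, the edges $vu$ and $v'u'$, two parallel edges between $v$ and $u'$, and two parallel edges between $v'$ and $u$. Let $\vec{G}$ have arcs $v\to u$, $v'\to u'$, $v\to u'$, $u'\to v$, $v'\to u$, $u\to v'$, and let $\vec{G}'$ be identical except that $vu$ and $v'u'$ are reversed; both orientations are strongly connected. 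Here $\vec{D}_G$ consists of the two vertex-disjoint arcs $v\to u$ and $v'\to u'$, so it is acyclic and its only directed paths are these two single arcs. But $\{v,u'\}$ is a tight cut of $\vec{G}$ containing $v$ and avoiding $u$, and $\{v',u\}$ is a tight cut containing $v'$ and avoiding $u'$, so reversing either arc destroys strong connectivity. (If you object to parallel edges, subdivide them; the same computation goes through on the resulting simple graph.) Thus no move of your scheme is available even though $\vec{G}\neq\vec{G}'$: any first reversal that keeps $1$-arc-connectivity must use edges outside the disagreement set and temporarily increases your induction measure.

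The example also pinpoints where your counting argument breaks: your contradiction needs a tight set containing a source and avoiding \emph{every} sink, but inadmissibility only supplies witnesses for sinks \emph{reachable} from that source, and the witness may absorb an unreachable sink whose negative excess restores $d^+_{\vec{D}_G}(X)\le d^-_{\vec{D}_G}(X)$ (above, $\{v,u'\}$ contains the unreachable sink $u'$). So the "delicate step" you flagged is not a technicality to be patched; the statement you would need is false. A secondary issue is the claim that tight cuts are closed under union and intersection: this holds only for crossing pairs (nonempty intersection, union $\neq V$), which is exactly why Frank, and this paper in Section 5, fix a reference vertex and juggle both in-tight and out-tight families. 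Frank's actual argument, sketched in Section 4 of the paper, is two-phase and measures progress differently: first reverse directed paths of $\vec{G}$ that are \emph{not} confined to the disagreement digraph, going from a vertex whose in-degree must increase to one whose in-degree must decrease, with a submodularity argument guaranteeing a choice that preserves $k$-arc-connectivity, until the in-degree vectors of the two orientations coincide; at that point the disagreement digraph is Eulerian, and reversing the directed cycles of a cycle decomposition one by one (your cycle case) finishes the proof.
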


In \cite{ito2022monotone}, Ito et al.~noticed that given two $k$-arc-connected orientations $\vec{G}$ and $\vec{G}'$ of an undirected graph $G$, it is not always possible to get  $\vec{G}'$ from  $\vec{G}$ by reorienting arcs one by one such that each intermediate orientation to be $k$-arc-connected. For an example, see the two $1$-arc-connected orientations of a cycle of length $3.$ However, for a $(2k+2)$-edge-connected undirected graph, they proved that such a reorientation is possible, using  \cref{thm:frank_reorientation_graphs} and the following result as a subroutine.
They mentioned that this result also provides a new algorithmic proof of \cref{nash_williams_weak_orientation_theorem}.

\clearpage

\begin{theorem}{{\cite[Theorem 1.1]{ito2022monotone}}}
\label{thm:conn_augm_total_graph}
    Let $G = (V, E)$ be a $2k$-edge-connected graph and $\vec{G}$ an orientation of $G$. 
    Then there exists a sequence $\vec{G}=\vec{G}_0, \vec{G}_1, \vec{G}_2, \ldots, \vec{G}_\ell$ of orientations of $G$ such that for $1 \leq i \leq \ell$, the orientation $\vec{G}_i$ is obtained from $\vec{G}_{i-1}$ by reorienting a single arc  of $\vec{G}_{i-1}$, $\lambda(\vec{G}_{i-1}) \leq \lambda(\vec{G}_i)$, 
    and $\lambda(\vec{G}_{\ell}) = k$. 
    Furthermore, such a sequence with $\ell \leq (k-\lambda(\vec{G}))|V|^3$ can be computed in time polynomial in the size of $G$.
\end{theorem}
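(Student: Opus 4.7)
\medskip
\noindent\textbf{Proof plan.}
My plan is to induct on $k - \lambda(\vec{G})$ and reduce \cref{thm:conn_augm_total_graph} to the following augmenting claim: if $\ell := \lambda(\vec{G}) < k$, then there is a sequence of at most $|V|^3$ single-arc reorientations, each non-decreasing $\lambda$, that terminates at an orientation with arc-connectivity $\ell + 1$. Iterating this claim at most $k - \lambda(\vec{G})$ times then yields the theorem together with the $(k - \lambda(\vec{G}))|V|^3$ bound.

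To prove the augmenting claim I would study the family $\Tight{\ell} := \{X \subsetneq V : X \neq \emptyset,\, d^+(X) = \ell\}$ of \emph{tight} sets and show that it can be driven to $\emptyset$. Two structural ingredients drive the argument. First, $d^+$ is submodular, so if two members of $\Tight{\ell}$ cross, then both their intersection and union lie in $\Tight{\ell}$; in particular, an inclusion-minimal $X \in \Tight{\ell}$ is not crossed by any other tight set, and every tight set is either disjoint from $X$ or a superset of $X$. Second, $2k$-edge-connectivity gives $d(X) \geq 2k$, hence $d^-(X) \geq 2k - \ell > \ell$, and in particular there is some $t \in X$ that is the head of an arc whose tail lies in $V \setminus X$.

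With these tools in hand I would fix a minimal $X \in \Tight{\ell}$, choose $s \in V \setminus X$ avoiding every tight set disjoint from $X$, take a directed path $P$ in $\vec{G}$ from $s$ to $t$, and reverse the arcs of $P$ one at a time, starting from the arc whose head is $t$ and moving backward along $P$. The existence of a valid $s$ follows from a partition argument: if every vertex of $V \setminus X$ were covered by a tight set disjoint from $X$, the maximal such tight sets would (by laminarity) partition $V \setminus X$, producing a partition $\mathcal{P} = \{X, Y_1, \ldots, Y_r\}$ of $V$ all of whose parts are tight, hence $e(\mathcal{P}) = \ell(r+1)$, contradicting the lower bound $e(\mathcal{P}) \geq k(r+1)$ from $2k$-edge-connectivity. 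After the full reversal of $P$ the quantity $d^+(X)$ has increased by exactly one, while $d^+(Y)$ does not decrease for any other tight $Y$ thanks to the choice of $s$, so a potential such as $\sum_{Y \in \Tight{\ell}} |Y|$ strictly decreases each round, yielding a polynomial bound on the number of rounds.

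The main obstacle will be showing that the monotonicity $\lambda(\vec{G}_{i-1}) \leq \lambda(\vec{G}_i)$ already holds under each \emph{individual} arc flip along $P$, not merely after the whole path has been reversed; a priori, a partially-reversed suffix of $P$ could temporarily create a new cut of value $\ell - 1$. I would handle this by processing $P$ from $t$ toward $s$, so that after each flip the partially-reoriented graph can be regarded as one whose new ``target vertex'' plays the role of $t$, and then arguing by another round of submodular uncrossing (together with the choices of $X$ and $s$) that no tight set is broken at any intermediate step. The $O(|V|^3)$ bound on the number of flips per round, and hence the overall $(k - \lambda(\vec{G}))|V|^3$ bound, then follows from a potential-function accounting in the spirit of \cite{ito2022monotone}.
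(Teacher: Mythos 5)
Your high-level strategy (increase $\lambda$ by one per phase, work with minimal tight sets, send a directed path into a minimal tight set and reverse it arc by arc, control progress with a potential) is indeed the skeleton of the argument in this paper (which proves the statement via its hypergraph generalization, Sections 5--9), but two of your key steps have genuine gaps. First, your structural claim that an inclusion-minimal tight set $X$ satisfies ``every tight set is either disjoint from $X$ or a superset of $X$'' is false: submodular uncrossing only applies to \emph{crossing} pairs, and a tight set $Y$ with $X\cup Y=V$, $X\cap Y\neq\emptyset$, neither containing the other, is not crossed by $X$. Such a $Y$ (equivalently, an in-tight set $V-Y\subseteq X$ with $d^-(V-Y)=\ell$) can contain your source $s$ and miss your sink $t$, and your choice of $s$ --- which only avoids tight sets \emph{disjoint} from $X$ --- does nothing to protect it; reversing the path then drops $d^+(Y)$ to $\ell-1$. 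This is exactly why the paper fixes a root $r$, keeps the two families $\mathcal{T}_-$ and $\mathcal{T}_+$ separate, requires \emph{both} a safe source and a safe sink (with condition (a) forcing every tight set through $s$, resp.\ $t$, to contain $S$, resp.\ $T$), couples them inside a set $R\in\mathcal{R}$, and proves Lemma~\ref{lem:safe_to_safe_kp1} to guarantee that every separator of $s$ from $t$ in the relevant direction has degree at least $k+1$. Your plan has no counterpart to the sink-side conditions, and the partition argument you give cannot supply them.

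Second, your potential $\sum_{Y\in\mathcal{T}_\ell}|Y|$ does not work as stated: reversing the path turns ``dangerous'' sets (those with $d^+ = \ell+1$ containing $s$ but not $t$) into new tight sets, so the sum need not decrease, and since the number of tight sets can be exponential the quantity is not polynomially bounded anyway. The paper instead measures progress by the family $\mathcal{M}$ of \emph{minimal} tight sets (Lemma~\ref{lem:value_decreasing}), and controlling the newly created minimal tight sets is precisely what condition (b) in the definitions of safe source/sink --- the condition about sets in $\mathcal{D}_\pm$ containing a tight subset avoiding $s$ or $t$ --- is for; your proposal never engages with dangerous sets. Relatedly, the step you yourself flag as the main obstacle (monotonicity under each individual flip, not just after the whole reversal) is where the paper does most of its work: the path cannot be an arbitrary $s$--$t$ path but must be produced by the guided searches of Algorithms~\ref{alg:adm-path-intight}/\ref{alg:adm-path-outtight}, so that it never leaves $R$ nor leaves (resp.\ enters) any tight set of the opposite family avoiding $s,t$, and the flipping order depends on whether $R\in\mathcal{T}_-$ or $R\in\mathcal{T}_+$; ``process from $t$ backward plus more uncrossing'' is not by itself a proof of this step.
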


Using \cref{thm:frank_reorientation_graphs,thm:conn_augm_total_graph}, the following result is given by Ito et al.~\cite{ito2022monotone}.

\begin{theorem}{{\cite[Theorem 1.2]{ito2022monotone}}}
\label{thm:reconfiguration_graph}
If $\vec{G}$ and $\vec{G}'$ are two $k$-arc-connected orientations of a $(2k+2)$-edge-connected graph $G$ then there exists a sequence $\vec{G} = \vec{G}_0, \vec{G}_1,\ldots,\vec{G}_{\ell}=\vec{G}'$ of $k$-arc-connected orientations of $G$ such that each $\vec{G}_i$ arises from $\vec{G}_{i-1}$ by reorienting a single arc.
\end{theorem}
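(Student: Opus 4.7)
The plan is to combine \cref{thm:frank_reorientation_graphs} and \cref{thm:conn_augm_total_graph} via a ``buffer'' of one extra unit of arc-connectivity above $k$. Since $G$ is $(2k+2)$-edge-connected, \cref{nash_williams_weak_orientation_theorem} tells us that $G$ admits a $(k+1)$-arc-connected orientation. I would first apply \cref{thm:conn_augm_total_graph} with target connectivity $k+1$ to $\vec{G}$, producing a sequence of single-arc reversals from $\vec{G}$ to some $(k+1)$-arc-connected orientation $\vec{H}$ along which the arc-connectivity is monotonically non-decreasing and therefore always at least $k$. Analogously, I would produce a sequence of single-arc reversals from $\vec{G}'$ to a $(k+1)$-arc-connected orientation $\vec{H}'$; reversing this sequence later would bring us back from $\vec{H}'$ to $\vec{G}'$, and since it is a monotone sequence starting at a $k$-arc-connected orientation, every orientation it visits has arc-connectivity at least $k$.

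To connect $\vec{H}$ to $\vec{H}'$, I would invoke \cref{thm:frank_reorientation_graphs} with parameter $k+1$, which gives a sequence of $(k+1)$-arc-connected orientations $\vec{H} = \vec{H}_0, \vec{H}_1, \ldots, \vec{H}_m = \vec{H}'$ where each $\vec{H}_i$ arises from $\vec{H}_{i-1}$ by reversing a directed cycle or a directed path. The remaining task is to implement each such cycle or path reversal as a sequence of single-arc reversals, all of whose intermediate orientations are at least $k$-arc-connected.

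The crux is a simple telescoping observation. Let $v_0, v_1, \ldots, v_\ell$ be the vertices along the cycle (with $v_\ell = v_0$) or the path in $\vec{H}_{i-1}$, and reverse its arcs one at a time in that order. For any non-empty proper subset $X \subsetneq V$, reversing the single arc $(v_i, v_{i+1})$ changes $d^+(X)$ by $\chi_X(v_{i+1}) - \chi_X(v_i)$, where $\chi_X$ is the indicator of $X$. Summing, after $j$ single-arc reversals the total change in $d^+(X)$ telescopes to $\chi_X(v_j) - \chi_X(v_0) \in \{-1, 0, 1\}$, so the arc-connectivity drops by at most $1$ during any intermediate step. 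Starting from a $(k+1)$-arc-connected orientation, every intermediate orientation is therefore at least $k$-arc-connected. Concatenating the three pieces $\vec{G} \to \vec{H}$, $\vec{H} \to \vec{H}'$, and $\vec{H}' \to \vec{G}'$ then gives the desired transformation. I expect the main obstacle to be spotting the $(k+1)$-buffer idea in the first place; once one has decided to work above the target connectivity, the telescoping bound handles both cycles and paths uniformly and the rest of the argument follows directly from the two previous theorems.
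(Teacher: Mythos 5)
Your proof is correct and follows essentially the route the paper indicates: it attributes \cref{thm:reconfiguration_graph} to the combination of \cref{thm:frank_reorientation_graphs} and \cref{thm:conn_augm_total_graph}, which is precisely your $(k+1)$-buffer construction. Your telescoping bound for replacing each cycle/path reversal by single-arc reversals is the standard way this combination is made to work, so nothing genuinely different is being done here.
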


In this paper, we study reorientations of hypergraphs.
The following characterization of hypergraphs that admit a $k$-hyperarc-connected orientation is due to Frank, Kir\'aly, and Kir\'aly~\cite{frank2003orientation}.

\begin{theorem}{{\cite[Corollary 3.5]{frank2003orientation}}}
\label{KPP_hypergraph}
A hypergraph $\Hg$ admits a $k$-hyperarc-connected orientation if and only if $\Hg$ is $(k, k)$-partition-connected.
\end{theorem}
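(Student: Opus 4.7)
The plan is to prove the two directions separately, treating the easy direction directly and deriving the hard direction from the algorithmic statement that is the main contribution of the paper. For necessity, suppose $\vec{\Hg}$ is a $k$-hyperarc-connected orientation of $\Hg$. Given any partition $\mathcal{P} = \{V_1, \ldots, V_t\}$, every hyperedge that intersects at least two members of $\mathcal{P}$ has its head in a unique $V_i$ and at least one tail outside $V_i$, so it contributes exactly $1$ to $d^-_{\vec{\Hg}}(V_i)$ and $0$ to $d^-_{\vec{\Hg}}(V_j)$ for $j \neq i$. Summing,
\[
e_\Hg(\mathcal{P}) \;=\; \sum_{i=1}^t d^-_{\vec{\Hg}}(V_i) \;\geq\; kt,
\]
which is exactly $(k,k)$-partition-connectivity.

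For sufficiency, I would prove the stronger algorithmic statement that any orientation of a $(k,k)$-partition-connected hypergraph can be transformed into a $k$-hyperarc-connected one by a monotone sequence of single hyperarc reorientations, from which \cref{KPP_hypergraph} follows immediately by taking the final element of the sequence. Let $\vec{\Hg}$ be an arbitrary orientation with $\lambda(\vec{\Hg}) = \ell < k$. The plan is to iteratively strictly improve the pair (minimum value $\ell$, number of sets $X$ attaining $d^-(X) = \ell$) in lexicographic order. Fix a set $X$ with $d^-(X) = \ell$. The goal is to find an $(s,t)$-hyperpath $P$ with $s \in V - X$ and $t \in X$ entirely supported on hyperarcs whose reorientation along the trimming of $P$ increases $d^-(X)$ and does not decrease $d^-(Y)$ for any other set $Y$. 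Reorienting along $P$ shifts the head of exactly one hyperarc from $V - X$ into $X$, raising $d^-(X)$ by one, while every intermediate head-swap is internal to the family of sets whose in-degrees are preserved by submodularity.

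The central technical step will be to establish that such a hyperpath exists whenever $\Hg$ is $(k,k)$-partition-connected and $\ell < k$. The approach I would take is to consider the family $\mathcal{T}_{\vec{\Hg}}$ of tight sets (those with $d^-$ equal to the current minimum) and its minimal members $\Minimal$. Build from $\Minimal$ and the ``reachability'' structure induced by hyperarcs a candidate partition $\mathcal{P}$ of $V$, and assume for contradiction no augmenting hyperpath reaches $X$. One then shows that every hyperedge intersecting two classes of $\mathcal{P}$ must be oriented inward into one of the tight classes, bounding $e_\Hg(\mathcal{P})$ by $\sum_{V_i \in \mathcal{P}} d^-(V_i) = \ell |\mathcal{P}| < k|\mathcal{P}|$, contradicting $(k,k)$-partition-connectivity. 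Crossing tight sets can be handled using the submodularity of $d^-$ to replace a crossing pair by their intersection and union, both of which remain tight, so the partition built from minimal tight sets is well-defined.

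The main obstacle is adapting the submodular arguments of Ito et al.~to hyperarcs: reorienting a single hyperarc simultaneously flips its contribution to the in-degree and out-degree of \emph{many} sets, because the tail is a set rather than a vertex. In particular, the definition of ``dangerous'' sets (those with $d^-$ equal to $\ell+1$) must be carefully chosen so that a hyperarc reorientation cannot create a new deficient set via a dangerous set containing several tail vertices. Once the augmenting hyperpath lemma is established, the polynomial bound of $kn^3$ follows by noting that each reorientation increases the lexicographic potential, that the minimum in-degree is bounded by $k$, and that the number of tight sets attaining a fixed in-degree is at most a polynomial in $n$ by standard arguments on the lattice of minimizers of a submodular function.
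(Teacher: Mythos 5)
Your necessity argument is exactly the paper's own (the identity $e_\Hg(\mathcal{P})=\sum_{X\in\mathcal{P}}d^-_{\vec{\Hg}}(X)$), and your plan to obtain sufficiency from the stronger monotone-reorientation statement is also the paper's route via \cref{thm:conn_augm_total_hypergraph}. The genuine gap is in the step you only sketch: the existence of an augmenting hyperpath whose reorientation helps the chosen deficient set without harming any other set. Reorienting the hyperarcs along an $(s,t)$-hyperpath can decrease by one the out-degree of every set containing $s$ but not $t$ and the in-degree of every set containing $t$ but not $s$; hence arbitrary endpoints $s\in V-X$, $t\in X$ will in general turn some other tight set into a deficient one. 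The crux is to choose the endpoints so that every such separator already has degree at least $k+1$, which is precisely what the paper's notions of \emph{safe source} and \emph{safe sink} and \cref{lem:safe_source_exists,lem:safe_sink_exists,lem:safe_to_safe_kp1} deliver; their proofs are where $(k+1,k+1)$-partition-connectivity is really used (a Lov\'asz-type counting argument over $\Tight{+}\cup\Dangerous{+}$, and a partition count over maximal members of $\Tight{-}$ and $\Dangerous{-}$), and your sketch contains no substitute for them. Your ``no augmenting hyperpath yields a partition into tight classes'' contradiction is plausible in outline but does not produce endpoints with this separator property, nor does it address the requirement, which you say you will prove, that connectivity must not drop at any \emph{intermediate} single-hyperarc step; the paper reorients the path from start to end or from end to start depending on whether $R\in\Tight{-}$ or $R\in\Tight{+}$, and constrains the search so the trimmed path never leaves a tight set it has entered, exactly for this reason.

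Your termination and complexity argument also fails as stated: the number of sets attaining the minimum in-degree can be exponential (minimum directed cuts are not polynomially many in general), so a potential of the form (connectivity, number of tight sets) gives no $kn^3$ bound, and the appeal to ``the lattice of minimizers'' does not repair this. The paper instead measures progress by $|\Minimal{}|$ together with the number of vertices covered by $\Minimal{}$; both are at most $|V|$ because, by \cref{clm:inter_union_tight_is_tight}, the minimal tight sets form a subpartition, and \cref{lem:value_decreasing} shows this pair improves after each path reorientation. For the bare statement of \cref{KPP_hypergraph} mere finite termination would suffice, but since you explicitly route the proof through the quantitative algorithmic theorem, this counting gap, and above all the missing safe-endpoint argument, must be filled.
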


Remark that not only \cref{KPP_hypergraph} is the hypergraphic analogue of Nash-Williams' theorem but also a generalization of it.

\section{New results}
\label{sec:main}

Our aim is to generalize \cref{thm:conn_augm_total_graph} to hypergraphs, providing a combinatorial algorithmic proof for \cref{KPP_hypergraph} as well as an explicit algorithm for connectivity-augmentation of directed hypergraphs by reorienting hyperarcs. The main result of the present paper is the following theorem.

\begin{theorem}
\label{thm:conn_augm_total_hypergraph}
Let $\Hg = (V,\mathcal{E})$ be a $(k,k)$-partition-connected undirected hypergraph and $\vec{\Hg}$ an orientation of $\Hg$. Then there exists a sequence $\vec{\Hg} = \vec{\Hg}_0, \vec{\Hg}_1, \ldots, \vec{\Hg}_{\ell}$ of orientations of $\Hg$ such that for $1 \leq i \leq \ell$, the orientation $\vec{\Hg}_{i}$ is obtained from $\vec{\Hg}_{i-1}$ by reorienting a single hyperarc of $\vec{\Hg}_{i-1}$, $\lambda(\vec{\Hg}_{i-1}) \leq \lambda(\vec{\Hg}_i)$, and $\lambda(\vec{\Hg}_{\ell}) = k$. Furthermore, such a sequence with $\ell \leq (k - \lambda(\vec{\Hg}))|V|^3$ can be computed in time polynomial in the size of $\Hg$.
\end{theorem}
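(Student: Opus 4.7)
The plan is to adapt the approach of Ito et al.~\cite{ito2022monotone} to hypergraphs, leveraging submodularity of $d^-$ and using $(k,k)$-partition-connectivity in place of $2k$-edge-connectivity. I proceed by a two-level argument: an outer induction on $k - \lambda(\vec{\Hg})$ reduces to the inductive step \emph{if $\ell := \lambda(\vec{\Hg}) < k$, then a finite sequence of single-hyperarc reorientations, each preserving $\lambda \geq \ell$, makes $\lambda$ strictly larger}. An inner loop achieves each such strict increase by iteratively performing a hyperpath reorientation that strictly decreases a combinatorial potential bounded by $O(|V|^2)$, such as one built from the sizes of inclusion-minimal tight sets.

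The core computation is a local property of hyperpath reorientations. For an $(s,t)$-hyperpath $P = (A_1, a_1), \ldots, (A_\ell, a_\ell)$ with $a_0 := s$ and $a_\ell = t$, reorienting the $i$-th hyperarc (replacing $(A_i, a_i)$ by $(A_i - a_{i-1} + a_i, a_{i-1})$) changes $d^-(X)$ by exactly $\chi_X(a_{i-1}) - \chi_X(a_i)$ for every $X \subsetneq V$, independently of the rest of $A_i$. This follows from a four-case analysis on the side of the cut $(X, V-X)$ on which $a_{i-1}$ and $a_i$ lie: when they are on the same side, removing $a_{i-1}$ from the tail and adding $a_i$ does not alter whether the modified tail is contained in $X$ (giving zero change), and when they are on opposite sides the head crosses and accounts for the $\pm 1$ shifts. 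Telescoping, the cumulative change in $d^-(X)$ after $j$ partial reorientations is $\chi_X(s) - \chi_X(a_j)$. Thus to raise $d^-(T)$ by one while preserving $\lambda \geq \ell$ at every intermediate state, it suffices to find a minimal tight set $T$, vertices $s \in T$ and $t \notin T$, and an $(s,t)$-hyperpath in $\vec{\Hg}$ such that every tight set containing any $a_j$ for $j \geq 1$ (including $a_\ell = t$) also contains $s$.

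The main obstacle is to prove the existence of such a hyperpath via $(k,k)$-partition-connectivity; this is the step that genuinely uses the combinatorial assumption. Fix an inclusion-minimal $T \in \Tight{}$; by minimality and submodular uncrossing, $T$ is the smallest tight set containing any of its vertices. Pick $s \in T$ and let $R$ be the set of vertices reachable from $s$ by a hyperpath obeying the tight-set dominance condition above. If $R \not\subseteq T$, then any $t \in R \setminus T$ gives the desired hyperpath. Otherwise, $R \subseteq T$; I would then show that $R$, together with inclusion-minimal tight sets partitioning $V \setminus R$ (obtained by iteratively uncrossing tight sets avoiding $R$), yields a partition $\mathcal{P}$ of $V$ every part of which is tight in $\vec{\Hg}$. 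Since each hyperedge of $\Hg$ crossing $\mathcal{P}$ contributes exactly $1$ to $\sum_{P \in \mathcal{P}} d^-_{\vec{\Hg}}(P)$ (namely to the part containing the head of its hyperarc), we obtain $e_\Hg(\mathcal{P}) = \ell |\mathcal{P}| < k |\mathcal{P}|$, contradicting $(k,k)$-partition-connectivity of $\Hg$.

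Finally, the length bound follows because each completed hyperpath reorientation either strictly increases $\lambda$ or strictly decreases the chosen potential. Since the potential is $O(|V|^2)$ and each hyperpath has at most $|V|$ hyperarcs, the inner loop contributes $O(|V|^3)$ single reorientations per unit increase of $\lambda$, yielding the claimed bound $(k - \lambda(\vec{\Hg}))|V|^3$. Polynomial-time computability of each step reduces to standard primitives: min-cut computations in directed hypergraphs (to identify tight sets) and forward reachability in an auxiliary digraph encoding the dominance constraint (to construct the hyperpath).
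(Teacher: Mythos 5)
Your outer induction, the local exchange computation (reorienting $(A_i,a_i)$ towards $a_{i-1}$ changes $d^-(X)$ by exactly $\chi_X(a_{i-1})-\chi_X(a_i)$, hence the telescoping criterion for never dropping below the current connectivity value), and the identity $e_\Hg(\mathcal{P})=\sum_{P\in\mathcal{P}}d^-_{\vec{\Hg}}(P)$ are all correct and consistent with the paper. However, the two places where the paper does its real work are missing. First, the existence argument: in your ``otherwise'' branch ($R\subseteq T$) you claim that $R$, together with inclusion-minimal tight sets partitioning $V\setminus R$, forms a partition all of whose parts are tight, but neither half of this is justified. If $R\subsetneq T$ then $R$ cannot be tight (that would contradict the minimality of $T$) and its in-degree is not bounded by anything useful; worse, a vertex outside $R$ all of whose tight sets contain $s$ (your dominance condition does not force such a vertex to be reachable) cannot lie in any tight set disjoint from $R$, so $V\setminus R$ need not decompose into tight sets at all --- knowing that every unreachable-for-a-reason vertex lies in \emph{some} tight set avoiding $s$ is far weaker than having tight sets \emph{contained in} $V\setminus R$ covering it. The paper gets the corresponding contradiction only with substantially more structure: it works with both families $\Tight{-}$ and $\Tight{+}$, confines the hyperpath to a minimal set $R\in\R$, and proves the existence of suitable endpoints by two different counting arguments (the Lov\'asz-type argument of \cref{lem:safe_source_exists} for safe sources, and the partition-connectivity count over maximal tight and dangerous sets of \cref{lem:safe_sink_exists} for safe sinks), combined with \cref{lem:path_in_qsp_qtm,lem:safe_to_safe_kp1} and the two search algorithms.

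Second, termination and the $|V|^3$ bound: you merely assert that a potential built from inclusion-minimal tight sets decreases. Your dominance condition constrains tight sets only, while the sets that become tight after a completed hyperpath reorientation are precisely the previously dangerous sets (in-degree one above the connectivity) that contain $t$ and avoid $s$; nothing in your construction (arbitrary $s\in T$, arbitrary dominance-respecting path, arbitrary $t$) prevents many new, small, minimal tight sets from appearing, so neither the number of minimal tight sets nor the number of vertices they cover need improve, and your inner loop has no termination guarantee. Controlling exactly this phenomenon is why the paper introduces condition (b) in the definitions of safe sources and safe sinks (which refers to $\Dangerous{+}$ and $\Dangerous{-}$), the family $\R$, and the path-finding algorithms guaranteeing that the trimmed path never leaves $R$ and never crosses other tight sets in the forbidden direction; these are the ingredients that make the lexicographic potential argument of \cref{lem:value_decreasing} (whose two claims characterize the new minimal sets) go through. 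Without an argument of this kind, your proof shows at best that the in-degree of one minimal tight set can be raised once, not that the process reaches a higher-connected orientation within $|V|^2$ hyperpath reorientations, so the claimed bound $\ell\le(k-\lambda(\vec{\Hg}))|V|^3$ and indeed termination itself remain unproven.
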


\cref{thm:conn_augm_total_hypergraph} provides the proof of sufficiency in \cref{KPP_hypergraph}.
Notice that the necessity in \cref{KPP_hypergraph} is straight-forward. Indeed, let $\vec{\Hg}$ be a $k$-hyperarc-connected orientation of $\Hg$. Then, for any partition $\mathcal{P}$ of $V$, we have $e_\Hg(\mathcal{P}) = \sum_{X \in \mathcal{P}} d^-_{\vec{\Hg}}(X) \geq k |\mathcal{P}|$ and hence $\Hg$ is $(k,k)$-partition-connected.
\medskip

\cref{thm:conn_augm_total_hypergraph} is obtained by applying  $k - \lambda({\vec{\Hg}})$ times \cref{thm:conn_augm_hypergraph}.

\begin{theorem}
\label{thm:conn_augm_hypergraph}
    Let $\Hg = (V, \mathcal{E})$ be a $(k+1, k+1)$-partition-connected hypergraph and $\vec{\Hg}$ a $k$-hyperarc-connected orientation of $\Hg$. 
    Then there exists a sequence $\vec{\Hg} = \vec{\Hg}_0, \vec{\Hg}_1, \vec{\Hg}_2, \ldots, \vec{\Hg}_\ell$ of orientations of $\Hg$ such that for $1 \leq i \leq \ell$, the orientation $\vec{\Hg}_i$ is obtained from $\vec{\Hg}_{i-1}$ by reorienting a single hyperarc  of $\vec{\Hg}_{i-1}$, $\lambda(\vec{\Hg}_{i-1}) \leq \lambda(\vec{\Hg}_i)$, 
    and $\lambda(\vec{\Hg}_{\ell}) = k+1$. 
    Furthermore, such a sequence with $\ell \leq |V|^3$ can be computed in time polynomial in the size of $\Hg$.
\end{theorem}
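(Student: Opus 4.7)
My plan is to generalize the potential-function argument of Ito et al.\ (\cref{thm:conn_augm_total_graph}) to directed hypergraphs. Call a proper nonempty set $X \subsetneq V$ \emph{tight} if $d^+(X) = k$. Since $\vec{\mathcal{H}}$ is $k$-hyperarc-connected, no set has $d^+(X) < k$, and $\vec{\mathcal{H}}$ is $(k+1)$-hyperarc-connected precisely when no tight set exists. I would associate to $\vec{\mathcal{H}}$ a potential of the form
\[
\Phi(\vec{\mathcal{H}}) \;=\; \sum_{v \in V} \min\bigl\{|X| : v \in X,\ X \text{ tight}\bigr\},
\]
with $\min \emptyset := |V|$, so that $\Phi \le |V|^2$ and $\Phi = |V|^2$ exactly when $\vec{\mathcal{H}}$ is $(k+1)$-hyperarc-connected. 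The inductive step will be an \emph{augmentation phase} consisting of at most $|V|$ single-hyperarc reorientations that preserves $\lambda \ge k$ throughout and strictly increases $\Phi$; $|V|^2$ phases of length at most $|V|$ then give the claimed bound of $|V|^3$ single reorientations.

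\textbf{The hyperpath-reorientation lemma.}
The core technical statement I would establish is the following: if $\vec{\mathcal{H}}$ is $k$-hyperarc-connected with at least one tight set and $\mathcal{H}$ is $(k+1,k+1)$-partition-connected, then there exists a minimum tight set $X$, vertices $s \in X$ and $t \in V - X$, and an $(s,t)$-hyperpath $P = (A_1,a_1),\dots,(A_\ell,a_\ell)$ in $\vec{\mathcal{H}}$, such that reorienting the hyperarcs of $P$ in a specific order, each having its head moved one step along the trimming of $P$, can be performed \emph{one hyperarc at a time} while maintaining $\lambda \ge k$ throughout, and such that after the full reorientation no minimum tight set contains $s$. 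This increases $\Phi$ by at least one. Repeated application gives \cref{thm:conn_augm_hypergraph}.

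\textbf{Constructing $P$ via $(k+1,k+1)$-partition-connectivity.}
The partition-connectivity hypothesis enters in two places. First, it implies $d_{\mathcal{H}}(X) \ge 2k+2$ for every nonempty proper $X$, so any tight $X$ satisfies $d^-(X) \ge k+2$, which gives ample room to pick the first hyperarc to flip. Second, and more crucially, it is the witness for the existence of $P$: assume for contradiction that no suitable hyperpath from $X$ to $V-X$ is available, and consider the family of tight sets and \emph{dangerous} sets (those with $d^+ = k+1$) that would obstruct any candidate hyperpath. By standard uncrossing using the submodularity of $d^+$ and $d^-$, this obstructing family can be refined into a partition $\mathcal{P}$ of $V$ with $e_{\mathcal{H}}(\mathcal{P}) < (k+1)|\mathcal{P}|$, contradicting $(k+1,k+1)$-partition-connectivity. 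Each single flip along $P$ preserves $\lambda \ge k$ by a direct degree-change analysis: when $(A,a)$ is reoriented to $(A-b+a,b)$ the only sets whose out-degree can decrease are the $(b,a)$-separators, and the structure of $P$ combined with the uncrossing argument rules out tight $(b,a)$-separators at the moment of flipping. Polynomial-time implementation follows because the relevant tight and dangerous sets can be computed by submodular-function / min-cut routines on $\vec{\mathcal{H}}$.

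\textbf{Main obstacle.}
The principal difficulty compared to the graph case is that reorienting a single hyperarc $(A,a) \mapsto (A-b+a,b)$ perturbs the in/out-degree of \emph{every} set that crosses $A$, in a way that depends on which tail $b$ is chosen; the arc-by-arc bookkeeping of Ito et al.\ therefore cannot be transcribed verbatim. The work concentrates in (i) identifying the correct family of obstructing sets — tight and dangerous sets crossing the intended hyperpath in the right way, (ii) performing the uncrossing of $d^+$ that survives the passage from graphs to hypergraphs (one must be careful because a hyperarc can be cut by both $X \cap Y$ and $X \cup Y$ simultaneously), and (iii) assembling the resulting obstruction family into a partition witness contradicting $(k+1,k+1)$-partition-connectivity. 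Once this structural lemma is in place, the inductive argument and polynomial bound on $\ell$ follow essentially as in the graph case.
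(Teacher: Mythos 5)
There is a genuine gap, and it sits exactly where the paper has to work hardest: the progress argument. You assert that one augmentation phase ends with ``no minimum tight set contains $s$'' and that this ``increases $\Phi$ by at least one,'' but reorienting the hyperpath changes the out-degree of every set the path crosses, and in particular sets that were \emph{dangerous} ($d^+=k+1$) before the phase can become tight after it. Nothing in your proposal prevents such newly tight sets from being small or from containing many vertices whose terms in $\Phi$ then drop, so $\Phi$ is not obviously (and not in general) monotone under an arbitrarily chosen phase. Controlling exactly these newly created tight sets is the heart of the paper's proof: it is why the safe-source/safe-sink definitions carry the second condition about dangerous sets, why the path is confined to a minimal set $R\in\R$ built from nested in-tight and out-tight sets, and why \cref{lem:value_decreasing} needs its two claims showing that every new minimal tight set is either $R$ itself or strictly contains $S$ or $T$; only then does a (different) potential --- first $|\Minimal{}|$, then the number of covered vertices --- strictly improve. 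Your sketch mentions dangerous sets only in the obstruction analysis for the existence of $P$, never in the progress measure, so the inductive step is unproven.

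A secondary but real problem is the orientation of the augmentation itself. Reorienting the hyperarcs of an $(s,t)$-hyperpath head-backwards (the only reorientation the hyperpath structure permits, since $a_i\in A_{i+1}$) reverses the path, which \emph{decreases} $d^+$ of every set containing $s$ but not $t$ by one. So placing $s$ inside the minimum out-tight set $X$ you want to repair, with $t\in V-X$, pushes $d^+(X)$ to $k-1$ rather than $k+1$. The correct setup, as in the paper, requires the source to lie in a minimal \emph{in}-tight set and the sink in a minimal \emph{out}-tight set, both inside a common $R\in\R$, together with the guarantee (the paper's \cref{lem:safe_to_safe_kp1}) that all $(s,t)$-separating sets of the wrong kind already have degree at least $k+1$. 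Also note that the existence of suitable endpoints is not a single uncrossing-to-partition argument: the paper needs a partition-connectivity count for safe sinks but a Lov\'asz-type counting inequality for safe sources, reflecting the asymmetry of directed hypergraphs. Your overall plan (path reorientation preserving $\lambda\ge k$, a potential bounded by $|V|^2$, phases of length $<|V|$) is the right skeleton, but without the machinery above the key steps do not go through.
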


The proof of \cref{thm:conn_augm_hypergraph} can be found in \cref{sec:preliminary,sec:main_algorithm,sec:existence,sec:finding,sec:admissability,sec:complexity}.

\medskip

We can also generalize  \cref{thm:frank_reorientation_graphs} to hypergraphs as follows.

\begin{theorem}
\label{thm:frank_reorientation_hypergraphs}
    If $\vec{\Hg}$ and $\vec{\Hg}'$ are two $k$-hyperarc-connected orientations of an undirected hypergraph $\Hg$ then there is a sequence $\vec{\Hg} = \vec{\Hg}_0, \vec{\Hg}_1,\ldots,\vec{\Hg}_{\ell}=\vec{\Hg}'$ of $k$-hyperarc-connected orientations of $\Hg$ such that each $\vec{\Hg}_i$ arises from $\vec{\Hg}_{i-1}$ by reversing one directed hypercycle or  hyperpath.
\end{theorem}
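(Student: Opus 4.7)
The plan is to adapt Frank's proof of \cref{thm:frank_reorientation_graphs} by induction on the number $|F|$ of hyperedges on which $\vec{\Hg}$ and $\vec{\Hg}'$ disagree. The inductive step aims to exhibit a directed hypercycle or an $(s,t)$-hyperpath formed entirely of hyperarcs of $F$ in the $\vec{\Hg}$-orientation whose reversal preserves $k$-hyperarc-connectivity. Such a reversal reorients the involved hyperarcs into their $\vec{\Hg}'$-orientation and strictly decreases $|F|$, so the induction closes.

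The foundation is two elementary degree-change identities. For a directed hypercycle $(A_1,a_1),\ldots,(A_\ell,a_\ell)$ with indices taken modulo $\ell$, the change in $d^-(X)$ under reversal, for any $X\subseteq V$, equals
\[
\Delta(X)=\sum_{i=1}^{\ell}\mathbf{1}_{e_i\not\subseteq X}\bigl(\mathbf{1}_{a_{i-1}\in X}-\mathbf{1}_{a_i\in X}\bigr),\qquad e_i:=A_i\cup\{a_i\}.
\]
Writing $\mathbf{1}_{e_i\not\subseteq X}=1-\mathbf{1}_{e_i\subseteq X}$, the first half telescopes to $0$ around the cycle, while the second half contributes $0$ since whenever $e_i\subseteq X$ both $a_{i-1},a_i\in e_i$ lie in $X$. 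Hence reversing a directed hypercycle preserves $d^-$ on every set and is unconditionally admissible. The same calculation on an $(s,t)$-hyperpath (setting $a_0:=s$, $a_\ell=t$ and omitting the modular wrap-around) yields $\Delta(X)=\mathbf{1}_{s\in X}-\mathbf{1}_{t\in X}$, so hyperpath reversal is admissible if and only if no $(s,t)$-separator $Y$ of $\vec{\Hg}$ is tight, i.e., $d^+_{\vec{\Hg}}(Y)=k$.

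If the restriction $\vec{\Hg}[F]$ of $\vec{\Hg}$ to its $F$-hyperarcs contains a directed hypercycle, the first identity settles the inductive step. Otherwise, the acyclic structure of $\vec{\Hg}[F]$ yields, via an out-reachability argument, a source $s$, a sink $t$, and an $(s,t)$-hyperpath on $F$-hyperarcs. The main obstacle is then to choose $s$, $t$, and the hyperpath so that no $(s,t)$-separator of $\vec{\Hg}$ is tight. My plan is a contradiction argument in the spirit of Frank: if every such choice is blocked by a tight $(s,t)$-separator of $\vec{\Hg}$, apply submodular uncrossing to the family of tight $(s,t)$-separators (closed under union and intersection by submodularity of $d^+_{\vec{\Hg}}$) to extract a canonical tight set $Y$; using that $s$ is a source and $t$ a sink in $\vec{\Hg}[F]$, one constrains the $F$-hyperarcs crossing $Y$ enough that reorienting them to recover $\vec{\Hg}'$ forces $d^+_{\vec{\Hg}'}(Y)<k$, contradicting the $k$-hyperarc-connectivity of $\vec{\Hg}'$. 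The technical subtlety compared to the graph case is that a single hyperarc's head and tails can straddle $Y$ in several configurations, each contributing differently to $d^+(Y)$ before and after reorientation, so the bookkeeping is more delicate and is the hardest step. Once admissibility is established, iterating the reversals produces the desired sequence $\vec{\Hg}=\vec{\Hg}_0,\vec{\Hg}_1,\ldots,\vec{\Hg}_\ell=\vec{\Hg}'$.
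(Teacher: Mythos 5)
Your two degree-change identities are correct (the paper uses them implicitly), but the induction on the number $|F|$ of disagreeing hyperedges has a genuine gap at its first step: reversing a directed hypercycle (or hyperpath) made of $F$-hyperarcs does \emph{not} in general put those hyperarcs into their $\vec{\Hg}'$-orientation. The reversal reorients each hyperarc $(A_i,a_i)$ towards the preceding head $a_{i-1}$, which is some vertex of the underlying hyperedge but need not be its head in $\vec{\Hg}'$; unlike a graph edge, a hyperedge has many possible heads, so ``disagreeing before and touched by the reversal'' does not imply ``agreeing after''. Hence $|F|$ need not decrease and the induction does not close. This is exactly where the paper departs from a verbatim transcription of Frank's proof of \cref{thm:frank_reorientation_graphs}: it builds an auxiliary digraph $D^r$ with one arc $uv$ per red (disagreeing) hyperedge, going from its $\vec{\Hg}'$-head $u$ to its $\vec{\Hg}$-head $v$. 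When $\vec{\Hg}$ and $\vec{\Hg}'$ have the same in-degree at every vertex, $D^r$ has equal in- and out-degree everywhere, hence decomposes into directed cycles, and each such cycle corresponds to a directed hypercycle of $\vec{\Hg}$ whose reversal sends every hyperarc exactly onto its $\vec{\Hg}'$-head (while preserving $d^-$ of every set, by your identity). An arbitrary directed hypercycle of $\vec{\Hg}[F]$ does not have this alignment property; you need the chaining through the $\vec{\Hg}'$-heads.

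Your hyperpath phase also conflates two different situations. In Frank's scheme, which the paper follows, hyperpath reversals serve only to equalize the in-degree vectors of $\vec{\Hg}$ and $\vec{\Hg}'$: one picks a vertex $t$ with $d^-_{\vec{\Hg}}(t) > d^-_{\vec{\Hg}'}(t)$, takes the minimal set $X$ with $d^-_{\vec{\Hg}}(X)=k$ containing $t$ (if any), finds inside it a deficient vertex $s$ with $d^-_{\vec{\Hg}}(s) < d^-_{\vec{\Hg}'}(s)$ by comparing $\sum_{v\in X} d^-(v)$ in the two orientations, and reverses an $(s,t)$-hyperpath of $\vec{\Hg}$ that is \emph{not} restricted to disagreeing hyperarcs; progress is measured by $\sum_v |d^-_{\vec{\Hg}}(v)-d^-_{\vec{\Hg}'}(v)|$, not by $|F|$ (such a reversal may well create new disagreements). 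In your setup, taking $s$ and $t$ to be a source and a sink of $\vec{\Hg}[F]$ with the path confined to $F$ gives no control over the blocking tight sets: being a source or sink only constrains the $F$-hyperarcs incident to $s$ and $t$, whereas excluding a tight $(s,t)$-separator $Y$ requires the global in-degree counting above, so the contradiction $d^+_{\vec{\Hg}'}(Y)<k$ you hope for is not forced (and when the two in-degree vectors already agree, no surplus/deficiency exists and a path reversal actively moves the vertex in-degrees away from those of $\vec{\Hg}'$). The argument should therefore be organized in two phases, as in the paper: first equalize the in-degree vectors by hyperpath reversals chosen via surplus/deficiency and uncrossing, then realign the red hyperedges by hypercycle reversals guided by a cycle decomposition of $D^r$.
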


The proof of \cref{thm:frank_reorientation_hypergraphs} is similar to that of Frank in \cite{frank1980orientation} for \cref{thm:frank_reorientation_graphs}. We briefly explain what must be changed in that proof to apply it for hypergraphs. Frank first considers the case when $d^-_{\vec{G}}(v) = d^-_{\vec{G}'}(v)$ for all vertices of $G$. He calls \emph{red} the arcs of $\vec{G}$ that are oriented differently in $\vec{G}$ and in $\vec{G}'$. Notice that in the directed graph $D^r$ induced by the red arcs, the in-degree is equal to the out-degree for every vertex. Frank then decomposes $D^r$ into directed cycles and reorients each. For hypergraphs, we call \emph{red} the hyperedges of $\Hg$ that are oriented differently in $\vec{\Hg}$ and in $\vec{\Hg}'$. We define a directed graph $D^r$ on the vertices of $\Hg$. For each red hyperedge $X$, we add the arc $uv$ to $D^r$ where $u$ and $v$ are the head vertices of the corresponding hyperarc $(X-v, v)$ of $\vec{\Hg}$ and hyperarc $(X - u, u)$ of $\vec{\Hg}'$. We then proceed to decompose $D^r$ into directed cycles and reorient the corresponding directed hypercycles. The rest of the proof is identical.

\medskip

Finally, we generalize \cref{thm:reconfiguration_graph} to hypergraphs.

\begin{theorem}
\label{thm:reconfiguration_hypergraph}
If $\vec{\Hg}$ and $\vec{\Hg}'$ are two $k$-hyperarc-connected orientations of a $(k+1, k+1)$-partition-connected hypergraph $\Hg$ then there exists a sequence $\vec{\Hg} = \vec{\Hg}_0, \vec{\Hg}_1,\ldots,\vec{\Hg}_{\ell}=\vec{\Hg}'$ of $k$-hyperarc-connected orientations of $\Hg$ such that each $\vec{\Hg}_i$ arises from $\vec{\Hg}_{i-1}$ by reorienting a single hyperarc.
\end{theorem}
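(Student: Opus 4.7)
The plan is to chain \cref{thm:conn_augm_hypergraph} and \cref{thm:frank_reorientation_hypergraphs}, mirroring how Ito et al.\ derive \cref{thm:reconfiguration_graph} from \cref{thm:frank_reorientation_graphs,thm:conn_augm_total_graph}. Concretely, I first push both $\vec{\Hg}$ and $\vec{\Hg}'$ up to $(k+1)$-hyperarc-connected orientations, then move between them at the higher connectivity level using hypercycle and hyperpath reversals, and finally refine each such reversal into single-hyperarc reorientations that preserve $k$-hyperarc-connectivity.

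For the augmentation phase, I apply \cref{thm:conn_augm_hypergraph} to $\vec{\Hg}$ and to $\vec{\Hg}'$ separately, obtaining two sequences of single-hyperarc reorientations, the first from $\vec{\Hg}$ to some $(k+1)$-hyperarc-connected orientation $\vec{\Hg}^*$ and the second from $\vec{\Hg}'$ to some $(k+1)$-hyperarc-connected orientation $(\vec{\Hg}')^*$. Both starting orientations are $k$-hyperarc-connected and $\lambda$ is non-decreasing along each sequence, so every intermediate orientation has $\lambda \geq k$. For the middle phase, \cref{thm:frank_reorientation_hypergraphs} gives a sequence of $(k+1)$-hyperarc-connected orientations from $\vec{\Hg}^*$ to $(\vec{\Hg}')^*$ in which consecutive orientations differ by the reversal of a single directed hypercycle or hyperpath.

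The remaining task, and the main obstacle, is to expand each cycle/path reversal into individual hyperarc reorientations maintaining $\lambda \geq k$ throughout. Let $P = (A_1, a_1), (A_2, a_2), \ldots, (A_\ell, a_\ell)$ be the hypercycle or hyperpath being reversed, set $a_0 := s$ (so that $a_0 = a_\ell$ in the hypercycle case), and reorient the hyperarcs one by one in the order $i = 1, 2, \ldots, \ell$, replacing $(A_i, a_i)$ by $(A_i - a_{i-1} + a_i, a_{i-1})$. For a fixed nonempty proper $S \subsetneq V$, a case distinction on whether $a_{i-1}$ and $a_i$ lie in $S$ shows that the $i$-th step alters $d^-(S)$ by exactly $\chi_S(a_{i-1}) - \chi_S(a_i)$; the remaining tails of $A_i$ do not affect the change, because in the two cases where the contribution to $d^-(S)$ actually changes, one of $a_{i-1}, a_i$ lies outside $S$ and already witnesses that the relevant tail set is not contained in $S$ both before and after the reorientation. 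Summing over $i = 1, \ldots, j$, the cumulative change telescopes to $\chi_S(a_0) - \chi_S(a_j) \in \{-1, 0, +1\}$, and since the orientation before the expansion is $(k+1)$-hyperarc-connected, every intermediate orientation still satisfies $d^-(S) \geq k$ for all $S$.

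I expect this local degree-change analysis to be the main technical step: it is precisely the place where a hypergraph reorientation might a priori behave worse than in the graph case, since a hyperarc carries an entire tail set rather than a single tail vertex, but the telescoping argument still goes through cleanly. Once this is in place, concatenating the first augmentation sequence forward, the middle sequence with each cycle/path reversal expanded as above, and the second augmentation sequence in reverse produces the required sequence of single-hyperarc reorientations from $\vec{\Hg}$ to $\vec{\Hg}'$, all of whose orientations are $k$-hyperarc-connected.
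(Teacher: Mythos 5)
Your proposal follows exactly the paper's (and Ito et al.'s) route: augment both orientations to $(k+1)$-hyperarc-connectivity via \cref{thm:conn_augm_hypergraph}, connect them at that level by \cref{thm:frank_reorientation_hypergraphs}, and expand each hypercycle/hyperpath reversal into single-hyperarc reorientations that keep $\lambda \geq k$. Your telescoping analysis of the change in $d^-(S)$ correctly supplies the one detail the paper leaves implicit, so the proof is correct and essentially identical in approach.
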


Similarly to how \cref{thm:reconfiguration_graph} is obtained in \cite{ito2022monotone} from \cref{thm:conn_augm_total_graph,thm:frank_reorientation_graphs}, \cref{thm:reconfiguration_hypergraph} is obtained by using \cref{thm:conn_augm_total_hypergraph,thm:frank_reorientation_hypergraphs}.

\section{Tight and dangerous sets}
\label{sec:preliminary}

For the remainder of this article we fix a $(k+1, k+1)$-partition-connected hypergraph $\Hg = (V, \mathcal{E})$. In order to avoid the verification of $X \cup Y = V$ for whether two sets $X, Y \subseteq V$ are crossing, we fix an arbitrary vertex $r \in V$ in the following definitions.
To prove \cref{thm:conn_augm_hypergraph}, we follow the paper of Ito et al.~\cite{ito2022monotone} and introduce the following sets with respect to an arbitrary $k$-hyperarc-connected orientation $\vec{\Hg}$ of $\Hg$, which will be clear from the context.

\begin{align*}
\Tight{-} &= \{X \subseteq V - r : d^-(X) = k\} \cup \{V\}\\
\Tight{+} &= \{X \subseteq V - r : d^+(X) = k\} \cup \{V\}\\
\Minimal{-} &= \text{ inclusion-wise minimal members of } \Tight{-}\\
\Minimal{+} &= \text{ inclusion-wise minimal members of } \Tight{+}\\
\Minimal{} &= \text{ inclusion-wise minimal members of }\Minimal{-} \cup \Minimal{+}\\
\Dangerous{-} &= \{X \subseteq V - r : d^-(X) = k+1\}\\
\Dangerous{+} &= \{X \subseteq V - r : d^+(X) = k+1\}\\
\R &= \{ R \subseteq V - r : \text{$R$ is inclusion wise minimal such that} \\
 &\qquad \hskip 1.9truecm \text{either $R \in \Tight{-}$ and $R$ contains a member of $\Tight{+}$} \\
 &\qquad \hskip 1.9truecm \text{or $R \in \Tight{+}$ and $R$ contains a member of $\Tight{-}$} \}
\end{align*}

We present briefly some properties of these sets that follow from submodularity of the degree functions.

\begin{gclaim}
\label{clm:inter_union_tight_is_tight}\label{clm:substraction_tight_is_tight}

    Let $X, Y$ be two crossing  sets in $V$. 
    
    (a) If $X, Y\in \Tight{-}$ then $X \cup Y$ and $X \cap Y\in\Tight{-}$. 
 
    (b) If $X, Y\in \Tight{+}$ then $X \cup Y$ and $X \cap Y\in\Tight{+}$. 
    
    (c) If $X\in\Tight{-}$ and  $Y\in\Tight{+}$ then $X - Y \in \Tight{-}$ and $Y - X \in \Tight{+}$.
  
    (d) If $X\in\Minimal{-}$ and  $Y\in\Tight{+}\cup\Dangerous{+}$ then $Y\in\Dangerous{+}$ and $Y - X \in \Tight{+}$.

    (e) If $X\in\Tight{-}\cup\Dangerous{-}$ and $Y\in\Minimal{+}$  then $X\in\Dangerous{-}$ and $X - Y \in \Tight{-}$.
\end{gclaim}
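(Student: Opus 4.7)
The plan is to derive all five parts from two standard inequalities for the degree functions of a directed hypergraph: the submodularity $d^-(X) + d^-(Y) \geq d^-(X \cap Y) + d^-(X \cup Y)$ (and its $d^+$-analogue) noted in the preliminaries, and the \emph{crossing} inequality $d^-(X) + d^+(Y) \geq d^-(X - Y) + d^+(Y - X)$ valid whenever $X \cap Y \neq \emptyset$, which is established by the same one-hyperarc-at-a-time case analysis on the head vertex. The $k$-hyperarc-connectivity of $\vec{\Hg}$ enters repeatedly as the lower bound $d^-(Z), d^+(Z) \geq k$ for every non-empty proper subset $Z$ of $V$, and the crossing hypothesis on $X,Y$ ensures that every set appearing below ($X \cap Y$, $X \cup Y$, $X-Y$, $Y-X$) is a non-empty subset of $V - r$ distinct from $V$, hence a legitimate candidate for membership in the tight and dangerous families.

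For (a) and (b), applying submodularity to $X, Y \in \Tight{-}$ (resp.\ $\Tight{+}$) yields the chain $2k = d^-(X) + d^-(Y) \geq d^-(X \cap Y) + d^-(X \cup Y) \geq 2k$, forcing equality throughout and placing both the intersection and the union in $\Tight{-}$ (resp.\ $\Tight{+}$). For (c), the crossing inequality applied to $X \in \Tight{-}$, $Y \in \Tight{+}$ gives $2k = d^-(X) + d^+(Y) \geq d^-(X - Y) + d^+(Y - X) \geq 2k$, and equality delivers $X - Y \in \Tight{-}$ and $Y - X \in \Tight{+}$.

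For (d), I would first rule out $Y \in \Tight{+}$: otherwise (c) would yield $X - Y \in \Tight{-}$, which is a non-empty proper subset of $X$ (non-empty and strict because $X,Y$ cross), contradicting $X \in \Minimal{-}$; hence $Y \in \Dangerous{+}$. The crossing inequality then gives $d^-(X - Y) + d^+(Y - X) \leq 2k + 1$, while the minimality of $X$ forces $d^-(X - Y) \geq k+1$, which combined with the lower bound $d^+(Y - X) \geq k$ pins $d^+(Y - X)$ to exactly $k$, i.e.\ $Y - X \in \Tight{+}$. Part (e) is the mirror image of (d), obtained by swapping the roles of $d^-$ and $d^+$, of $X$ and $Y$, and of $\Minimal{-}$ and $\Minimal{+}$.

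The main care point throughout is the bookkeeping that the intermediate sets really are non-empty proper subsets of $V$ avoiding the reference vertex $r$; this reduces to the crossing hypothesis together with the observation that $r$ lies in no member of $\Tight{\pm} \cup \Dangerous{\pm}$ other than $V$. Beyond this routine verification, no serious obstacle is expected: the five statements are obtained by a uniform pattern of sandwiching a sum of degrees between $2k$ (or $2k+1$) on one side and the connectivity lower bound on the other, combined with the minimality assumptions in (d) and (e).
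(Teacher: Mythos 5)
Your proposal is correct and takes essentially the same approach as the paper: (a)--(c) are the identical submodular sandwich arguments (your ``crossing inequality'' is exactly the paper's application of submodularity of $d^-$ to $X$ and $V-Y$ via $d^+(Y)=d^-(V-Y)$), and (d)/(e) use the same ingredients---minimality forcing $d^-(X-Y)\ge k+1$, the connectivity lower bounds, and the upper bound on $d^+(Y)$. The only difference is organizational: you first exclude $Y\in\Tight{+}$ via (c) and then sandwich, whereas the paper obtains $Y\in\Dangerous{+}$ and $Y-X\in\Tight{+}$ from a single inequality chain.
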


\begin{proof}
(a) By $X, Y \in \Tight{-}$, submodularity of $d^-$, $X\cap Y\neq\emptyset,$ $X\cup Y\neq V,$ and $\lambda(\vec{\mathcal{H}})=k,$ we have
\begin{equation*}
    k + k = d^-(X) + d^-(Y) \geq d^-(X \cup Y) + d^-(X \cap Y) \geq k + k.
\end{equation*}
Hence equality holds everywhere, so we have $X \cup Y, X \cap Y \in \Tight{-}$.

(b) can be proved in the same way as (a) by  replacing $d^-$ by $d^+$.

(c) By $X\in \Tight{-}$, $Y\in \Tight{+}$, $d^-(V - Y) = d^+(Y)$, submodularity of $d^-$, $X-Y\neq\emptyset\neq Y-X,$ and $\lambda(\vec{\mathcal{H}})=k,$ we have 
    \begin{align*}
        k + k &= d^-(X) + d^-(V - Y)\\ 
        		&\geq d^-(X \cap (V - Y)) + d^-(X \cup (V - Y))\\ 
		&= d^-(X - Y) + d^+(Y - X)\\ 
		&\geq k + k.
    \end{align*}
Hence equality holds everywhere, so we have  $X - Y \in \Tight{-}$ and $Y - X \in \Tight{+}$.

(d) By $X\in\Minimal{-}$,  $Y\in\Tight{+}\cup\Dangerous{+}$, the submodularity of $d^-$, $\lambda(\vec{\mathcal{H}})=k,$ $X-Y\neq\emptyset\neq Y-X,$ and minimality of $X$, we obtain
\begin{align*}
    k + (k+1) & \geq d^-(X) + d^+(Y) \\
            & = d^-(X) + d^-(V - Y) \\
            & \geq d^-(X - Y) + d^-(V-(Y-X)) \\
            & = d^-(X-Y) + d^+(Y-X) \\
            & \geq (k+1) + k.
\end{align*}
It follows that $d^+(Y) = k+1$ and that $d^+(Y - X) = k$.

(e) can be proved similarly as (d).
\end{proof}

By \cref{clm:inter_union_tight_is_tight,clm:substraction_tight_is_tight}, each of the families $\Minimal{-}, \Minimal{+} \text{ and } \Minimal{}$ is a subpartition of $V$.

\section{Main algorithm}
\label{sec:main_algorithm}

We follow the framework presented by \cite{ito2022monotone} and adapt it for hypergraphs.

\medskip

{
\setlength\parindent{7.25pt}

\noindent
For $S \in \Minimal{-}$, a vertex $s \in S$ is a \emph{safe source} in $S$ if

\smallskip

(a) for every $s \in X \in \Tight{+}$, we have $S \subsetneq X$,

\smallskip

(b) for every $s \in X \in \Dangerous{+}$ such that $S - X \neq \emptyset$, there exists $Y \in \Tight{+}$ such that $s \not\in Y \subsetneq X$.

\medskip

\noindent
For $T \in \Minimal{+}$, a vertex $t \in T$ is a \emph{safe sink} in $T$ if

\smallskip

(a) for every $t \in X \in \Tight{-}$, we have $T \subsetneq X$,

\smallskip

(b) for every $t \in X \in \Dangerous{-}$ such that $T - X \neq \emptyset$, there exists $Y \in \Tight{-}$ such that $t \not\in Y \subsetneq X$.
}
\medskip

\noindent
Let $P$ be a directed $(s,t)$-hyperpath in $\vec{\Hg}$ and $R$ a set in $\R$. We say that $P$ is \emph{admissible} in $R$ if
\begin{itemize}
\setlength\itemsep{1pt}
	\item [(a)] $s$ is a safe source in a  set $S \in \Minimal{-}$, 
		$t$ is a safe sink in a  set $T \in \Minimal{+}$, 
		$S$ and $T$ are contained in $R$, 
	\item [(b)] reorienting all hyperarcs in $P$ one by one, 
 			from start to end if $R\in \R\cap\Tight{+}$ and 
 			from end to start if $R\in \R\cap\Tight{-}$, 
		never decreases the hyperarc-connectivity of the current directed hypergraph, 
	\item [(c)] after all hyperarcs of $P$ have been reoriented, the new orientation's 
		$\Minimal{}'$ set has 
			either less elements than $\Minimal{}$, 
			or the same number of elements as $\Minimal{}$ 
		but covers more vertices than $\Minimal{}$.
\end{itemize}
\medskip

\cref{alg:conn-augmentation-global} aims to compute a $(k+1)$-hyperarc-connected orientation of $\Hg$ from a given $k$-hyperarc-connected orientation $\vec{\Hg}$ of $\Hg$. The algorithm starts by selecting an arbitrary vertex $r$ of $\Hg$, which will allow us to compute the sets $\R, \Minimal{-}, \Minimal{+}$ with respect to $\vec{\Hg}$ and $r$. If $\Minimal{-} = \Minimal{+} = \{V\}$ then for all $X \subseteq V - r$, we have $d^-_{\vec{\Hg}}(X) \geq k+1$ and $d^+_{\vec{\Hg}}(X) \geq k+1$, this implies that we have a $(k+1)$-hyperarc-connected orientation and we are done.
Otherwise, we pick an arbitrary set $R$ in $\R$ and find an admissible $(s,t)$-hyperpath $P$ in $R$. Depending on whether $R$ is in $\Tight{+}$ or $\Tight{-}$, we reorient one by one each hyperarc of $P$ either from start to end or from end to start such that we do not decrease the hyperarc-connectivity of $\vec{\Hg}$ in each reorientation step. We repeat the process with the new orientation until $|\Minimal{}| = \{V\}$.
Since $\Minimal{}$ is a subpartition of $V$ and by the definition of an admissible hyperpath, the case $|\Minimal{}'| = |\Minimal{}|$ can only occur at most $|V|$ times until $|\Minimal{}'| < |\Minimal{}|$, which itself can only happen at most $|V|$ times. Thus, since no two hyperarcs in a hyperpath have the same head vertex, we reorient at most $|V|^3$ hyperarcs and hence the algorithm terminates.
It then suffices to show the existence of an admissible path and that reorienting its hyperarcs one by one does not decrease the hyperarc-connectivity to prove \cref{thm:conn_augm_hypergraph}.

\medskip

\begin{algorithm}[H]
\caption{Hypergraph connectivity augmentation algorithm}\label{alg:conn-augmentation-global}
\KwData{A $k$-hyperarc-connected orientation $\vec{\Hg}$ of a $(k+1,k+1)$-partition-connected hypergraph $\Hg$.}
\KwResult{A $(k+1)$-hyperarc-connected orientation $\vec{\Hg'}$ of $\Hg$.}
\SetKw{KwGoTo}{go to line}
Take an arbitrary vertex $r$ of $\Hg$\;
Let $\vec{\Hg'} := \vec{\Hg}$\;
Compute the sets $\R$, $\Minimal{-}$, $\Minimal{+}$ of $\vec{\Hg'}$\; \label{alglabel:compute_sets}
\If{$\Minimal{-} = \Minimal{+} = \{V\}$}{
    Return $\vec{\Hg'}$\;
}
Take a set $R \in \R$\;
Find an admissible $(s,t)$-hyperpath $P = (A_1, a_1),$ $(A_2, a_2),$ $\ldots, (A_\ell, a_\ell)$ in $R,$
using \cref{alg:adm-path-intight} if $R\in\Tight{-}$ and \cref{alg:adm-path-outtight} if $R\in\Tight{+}$;

\eIf{$R \in \Tight{-}$}{
    \For{$i = \ell, \ell-1, \ldots, 2$}{
        Reorient $(A_i, a_i)$ towards $a_{i-1}$\;
    }
    Reorient $(A_1, a_1)$ towards $s$\;
}{
    Reorient $(A_1, a_1)$ towards $s$\;
    \For{$i = 2, 3, \ldots, \ell$}{
        Reorient $(A_i, a_i)$ towards $a_{i-1}$;
    }
}
\KwGoTo \ref{alglabel:compute_sets}
\end{algorithm}

\section{Existence of safe sources and safe sinks}
\label{sec:existence}

We first show that any $S \in \Minimal{-}$ contains a safe source and any $T \in \Minimal{+}$ contains a safe sink. Notice that the proofs are not the same and, because of asymmetry of directed hypergraphs, the proof for the existence of safe sources cannot be used for the existence of safe sinks and vice versa.
We start with the existence of a safe source whose  proof  is inspired by a proof of Lov{\'a}sz  \cite{lovasz1973connectivity}.

\begin{lemma}
\label{lem:safe_source_exists}
For $S \in \Minimal{-}$, there exists a safe source $s$ in $S$.
\end{lemma}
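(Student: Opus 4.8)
The plan is to argue by contradiction: assuming that no vertex of $S$ is a safe source, I would build a partition $\mathcal{P}$ of $V$ that violates $(k+1,k+1)$-partition-connectivity. Three structural facts about $S$ drive the argument. First, applying $(k+1,k+1)$-partition-connectivity to $\{S, V - S\}$ gives $d^-(S) + d^+(S) = e_{\Hg}(\{S,V-S\}) \ge 2(k+1)$, so since $d^-(S) = k$ we get $d^+(S) \ge k+2$; in particular $S \notin \Tight{+} \cup \Dangerous{+}$. Second, no $X \in \Tight{+}$ crosses $S$: by \cref{clm:inter_union_tight_is_tight}(d) applied with $S \in \Minimal{-}$ and $X$, a crossing set in $\Tight{+}$ would lie in $\Dangerous{+}$, which is impossible. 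Third, by the same \cref{clm:inter_union_tight_is_tight}(d), if $X \in \Dangerous{+}$ crosses $S$ then $X - S \in \Tight{+}$; as $X - S \subsetneq X$ and $(X-S) \cap S = \emptyset$, the set $Y := X - S$ witnesses condition (b) for every $s \in S \cap X$, so such an $X$ never causes a genuine failure of condition (b).

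Consequently, every vertex $s \in S$ that is not a safe source is so because of a set \emph{strictly inside} $S$: either (a) fails, witnessed by some $X \in \Tight{+}$ with $s \in X \subsetneq S$; or (b) fails, witnessed by some $X \in \Dangerous{+}$ with $s \in X \subsetneq S$ for which no $Y \in \Tight{+}$ satisfies $s \notin Y \subsetneq X$. Let $B_a$ be the union of all members of $\Tight{+}$ that are proper subsets of $S$. Since $S \notin \Tight{+}$, \cref{clm:inter_union_tight_is_tight}(b) shows the inclusion-wise maximal such sets $X_1,\dots,X_p$ are pairwise disjoint, and $B_a = X_1 \cup \dots \cup X_p$. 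Suppose moreover that every vertex of $S - B_a$ fails (b). For such a vertex $s$, its witness $X \in \Dangerous{+}$ contains no member of $\Tight{+}$ whatsoever (a member $Y \subsetneq X$ would satisfy $s \notin Y$, because $s \notin B_a$, hence would witness (b) for $s$); a short submodularity computation with this $X$ and the sets $X_i$ then forces $X \cap X_i = \emptyset$ for all $i$, so $X \subseteq S - B_a$. Uncrossing these witnesses — two crossing ones have union and intersection of out-degree exactly $k+1$, since out-degree $k$ would produce a member of $\Tight{+}$ either inside $S - B_a$ or strictly inside one of the witnesses, both impossible — yields pairwise disjoint sets $Y_1,\dots,Y_m \in \Dangerous{+}$ contained in, and covering, $S - B_a$.

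If no vertex of $S$ is a safe source, then $X_1,\dots,X_p, Y_1,\dots,Y_m$ partition $S$, so $\mathcal{P} := \{X_1,\dots,X_p, Y_1,\dots,Y_m, V - S\}$ is a partition of $V$ of size $p+m+1$. Since every hyperedge meeting $t \ge 2$ members of $\mathcal{P}$ contributes $t - 1 \ge 1$ to $\sum_{P \in \mathcal{P}} d^+(P)$, we get $\sum_{P \in \mathcal{P}} d^+(P) \ge e_{\Hg}(\mathcal{P}) \ge (k+1)(p+m+1)$. On the other hand $d^+(X_i) = k$, $d^+(Y_j) = k+1$, and $d^+(V - S) = d^-(S) = k$, so the left-hand side equals $pk + m(k+1) + k$; the inequality therefore collapses to $p + 1 \le 0$, which is absurd. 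Hence some vertex of $S$ is a safe source.

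The main obstacle is the condition-(b) uncrossing in the second paragraph: one must verify that the effective dangerous witnesses can be merged into pairwise disjoint sets that still cover $S - B_a$, that this merging keeps them inside $S - B_a$, and that they never acquire a tight-out subset along the way. All of this rests on the delicate case analysis of the out-degrees of intersections and unions, where \cref{clm:inter_union_tight_is_tight} and the minimality of $S$ in $\Minimal{-}$ are exactly what is needed; a secondary point to handle carefully is that failures of (a) and failures of (b) must be fed simultaneously into the single partition $\mathcal{P}$.
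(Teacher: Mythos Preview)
Your argument is correct (with the trivial caveat that the case $S=V$, where $V-S$ is empty and your partition $\mathcal{P}$ collapses, needs the separate one-line treatment the paper gives). It is, however, a genuinely different proof from the paper's. The paper proves \cref{lem:safe_source_exists} via a Lov\'asz-style counting argument: it takes a covering family $\mathcal{X}=\{X_1,\dots,X_\ell\}$ of $\Tight{+}\cup\Dangerous{+}$ subsets of $S$ that is maximal and then minimal in size, and shows $\mathcal{X}$ cannot cover $S$ by comparing $\sum d^-(Y_i)$ with $\sum d^+(X_i)$, where the $Y_i$ are the ``private parts'' $X_i - \bigcup_{j\neq i}X_j$; any uncovered vertex is then a safe source by \cref{clm:inter_union_tight_is_tight}(d). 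Your proof instead mirrors the structure of the paper's proof of \cref{lem:safe_sink_exists}: you uncross the maximal $\Tight{+}$ and $\Dangerous{+}$ subsets into a genuine partition of $S$ and feed it directly into the $(k+1,k+1)$-partition-connectivity hypothesis. The paper explicitly remarks, just before \cref{lem:safe_source_exists}, that because of the asymmetry of directed hypergraphs the safe-sink proof ``cannot be used'' for safe sources; the asymmetry in question is that for a partition $\mathcal{P}$ one only has $\sum_{P\in\mathcal{P}} d^+(P) \ge e_{\Hg}(\mathcal{P})$ rather than the equality $\sum_{P\in\mathcal{P}} d^-(P) = e_{\Hg}(\mathcal{P})$ used in \cref{lem:safe_sink_exists}. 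Your proof observes, correctly, that this inequality is still in the right direction for the desired contradiction $0 \ge p+1$, so the safe-sink style argument does in fact transfer. What the paper's approach buys is a more constructive statement (it exhibits a safe source rather than arguing by contradiction) and it only consumes the $2(k+1)$-edge-connectivity consequence of partition-connectivity; what your approach buys is a uniform treatment of the source and sink lemmas.
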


\begin{proof}
Let $S \in \Minimal{-}$. If $S = V$ then $s = r$ is a safe source, so assume that $S \neq V$.
Let $\mathcal{X} = \{X_1, X_2, \cdots, X_\ell\}$ be a family of vertex sets such that $X_i \subseteq S, X_i \in \Tight{+} \cup \Dangerous{+}$ and $\mathcal{X}$ covers as many vertices of $S$ as possible and under this condition minimizes $\ell$.

\begin{claim}
\label{claim1_safe_source}
The family $\mathcal{X}$ does not cover all vertices of $S$.
\end{claim}
\begin{claimproof}
Suppose for a contradiction that $\bigcup_{X \in \mathcal{X}} X = S$.
Since $S \in \Tight{-}$, $S \neq V$, and $\Hg$ is $(k+1, k+1)$-partition-connected, we have $d^+(S) \ge 2(k+1)-d^-(S)= k+2$. Thus, $S \not\in \Tight{+} \cup \Dangerous{+}$ and $\ell \geq 2$. We also have
\begin{equation}\label{eq:barS}
    2 \leq d^+(S) - d^-(S) = d^-(\bar{S}) - d^+(\bar{S}).
\end{equation}
For every $1 \leq i \leq \ell$, let $Y_i = X_i - \bigcup_{i \neq j} X_j$. By the minimality of $\ell$, we have $Y_i \neq \emptyset$. Let $X_0 = Y_0 = \bar{S} \neq \emptyset$. We show that
\begin{equation}\label{eq:lovasz_counting_2}
    \sum_{i = 0}^\ell d^-(Y_i) \leq \sum_{i = 0}^\ell d^+(X_i).
\end{equation}
Indeed, since $\{Y_1, \ldots, Y_\ell\}$ is a subpartition of $V$, every hyperarc contributes at most 1 to the left-hand side. Moreover, if a hyperarc $(Z, v)$ contributes 1 to the left-hand side then $v \in Y_i$ for some $i \in \{0, 1, \ldots, \ell\}$ and there is some tail $u \in Z$ such that $u \in \bar{Y_i} = \bigcup_{j \neq i} X_j$. Therefore, $u \in X_j$ for some $j \neq i$, so $(Z,v)$ contributes at least 1 to the right-hand side and \eqref{eq:lovasz_counting_2} follows.

Furthermore, for $1 \leq i \leq \ell$, by $\ell \geq 2$, $X_i \in \Tight{+} \cup \Dangerous{+}$, $\lambda(\vec{\mathcal{H}})=k,$ and $\emptyset \neq Y_i \subsetneq S \in \Minimal{-}$, we have
\begin{equation}\label{eq:dXi}
    d^+(X_i) \leq k + 1 \leq d^-(Y_i).
\end{equation}

By equations \eqref{eq:barS}--\eqref{eq:dXi}, we obtain 
\begin{equation*}
    2 \leq d^-(\bar{S}) - d^+(\bar{S}) = d^-(Y_0) - d^+(X_0) \leq \sum_{i = 1}^\ell (d^+(X_i) - d^-(Y_i)) \leq 0,
\end{equation*}
a contradiction that proves \cref{claim1_safe_source}.
\end{claimproof}

By \cref{claim1_safe_source}, there exists a vertex $s \in S - \bigcup_{i=1}^\ell X_i$. We aim to show $s$ is a safe source in $S$. Suppose that $s \in X \in \Tight{+} \cup \Dangerous{+}$. By the definition of $s$ and $X$, we have $X - S \neq \emptyset$. Suppose now that $S - X \neq \emptyset$.
By \cref{clm:inter_union_tight_is_tight}(d), we have  $d^+(X) = k+1$ and $d^+(X - S) = k$. Then, for every $s \in X \in \Tight{+}$, we have $S \subsetneq X$, and for every $s \in X \in \Dangerous{+}$ such that $S - X \neq \emptyset$, we have $X - S \in \Tight{+}$. Therefore, $s$ is a safe source in $S$.
\end{proof}

We next show the existence of a safe sink whose proof is similar to the one found in \cite{ito2022monotone}.

\begin{lemma}
\label{lem:safe_sink_exists}
For $T \in \Minimal{+}$, there exists a safe sink $t$ in $T$.
\end{lemma}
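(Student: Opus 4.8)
The plan is to mirror the structure of the proof of \cref{lem:safe_source_exists}, but with the roles of in-degree and out-degree swapped, and to exploit \cref{clm:inter_union_tight_is_tight}(e) in place of (d). Fix $T \in \Minimal{+}$; if $T = V$ then $t = r$ works, so assume $T \neq V$. Choose a family $\mathcal{X} = \{X_1, \ldots, X_\ell\}$ with each $X_i \subseteq T$, each $X_i \in \Tight{-} \cup \Dangerous{-}$, covering as many vertices of $T$ as possible, and subject to that minimizing $\ell$. As before, each $Y_i := X_i - \bigcup_{j \neq i} X_j$ is nonempty by minimality of $\ell$. The goal is to show $\mathcal{X}$ does not cover all of $T$, and then that any uncovered vertex $t \in T - \bigcup_i X_i$ is a safe sink.

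For the covering claim, suppose $\bigcup_i X_i = T$. Since $T \in \Tight{+}$, $T \neq V$, and $\Hg$ is $(k+1,k+1)$-partition-connected, applying partition-connectivity to $\{T, V-T\}$ gives $d^-(T) \geq 2(k+1) - d^+(T) = k+2$, so $T \notin \Tight{-} \cup \Dangerous{-}$ and $\ell \geq 2$; moreover $2 \leq d^-(T) - d^+(T) = d^+(\bar T) - d^-(\bar T)$. Set $X_0 = Y_0 = \bar T$. The counting step should produce $\sum_{i=0}^\ell d^+(Y_i) \leq \sum_{i=0}^\ell d^-(X_i)$: since $\{Y_0, \ldots, Y_\ell\}$ is a subpartition, each hyperarc contributes at most $1$ to the left side, and if a hyperarc $(Z, v)$ contributes $1$ on the left then $v \notin Y_i$ forces... — here I need to be careful, because the out-degree condition is not symmetric to the in-degree one. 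For $d^+(Y_i)$ a hyperarc $(Z,v)$ counts when $v \notin Y_i$ and $Z \cap Y_i \neq \emptyset$; I want to charge it to some $d^-(X_j)$, i.e. find $j$ with $v \in X_j$ and some tail outside $X_j$. Taking a tail $u \in Z \cap Y_i \subseteq X_i$, and using that either $v \in \bar Y_i = \bigcup_{j \neq i} X_j$ (so $v \in X_j$, $u \in X_i \setminus X_j$, giving a contribution to $d^-(X_j)$) or $v \in Y_0 = \bar T$ handled via $i \neq 0$; this should go through analogously to the source case. Combined with $d^-(X_i) \leq k+1 \leq d^+(Y_i)$ for $1 \leq i \leq \ell$ (from $X_i \in \Tight{-} \cup \Dangerous{-}$ and $\emptyset \neq Y_i \subsetneq T \in \Minimal{+}$), the chain $2 \leq d^+(\bar T) - d^-(\bar T) = d^+(Y_0) - d^-(X_0) \leq \sum_{i=1}^\ell (d^-(X_i) - d^+(Y_i)) \leq 0$ is the desired contradiction.

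With the covering claim in hand, pick $t \in T - \bigcup_{i=1}^\ell X_i$. To check condition (a) of the safe-sink definition, suppose $t \in X \in \Tight{-}$; then $X - T \neq \emptyset$ since $t$ is uncovered, and if also $T - X \neq \emptyset$ then $X, T$ are crossing with $X \in \Tight{-}$, $T \in \Minimal{+}$, so \cref{clm:inter_union_tight_is_tight}(e) forces $X \in \Dangerous{-}$, contradicting $X \in \Tight{-}$; hence $T \subsetneq X$, giving (a). For condition (b), suppose $t \in X \in \Dangerous{-}$ with $T - X \neq \emptyset$; again $X - T \neq \emptyset$, so $X, T$ cross, and \cref{clm:inter_union_tight_is_tight}(e) with $X \in \Tight{-} \cup \Dangerous{-}$ and $T \in \Minimal{+}$ yields $X - T \in \Tight{-}$; since $t \in T$ we have $t \notin X - T \subsetneq X$, which is exactly the witness $Y := X - T$ required by (b). Therefore $t$ is a safe sink in $T$.

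The main obstacle I anticipate is the counting inequality $\sum_{i=0}^\ell d^+(Y_i) \leq \sum_{i=0}^\ell d^-(X_i)$: the in-degree/out-degree asymmetry of directed hypergraphs (a hyperarc has one head but many tails) means the charging argument is not a verbatim transpose of the source case, and one must verify that for a hyperarc contributing to some $d^+(Y_i)$ via a tail in $Y_i$, the head $v$ genuinely lands in some $X_j$ with $j \neq i$ — in particular handling the boundary index $i=0$ (where $Y_0 = \bar T$) correctly, using $\bigcup_{i=1}^\ell X_i = T$ to locate $v$. Everything after the covering claim is a routine application of \cref{clm:inter_union_tight_is_tight}(e), so the proof should be short once that inequality is nailed down.
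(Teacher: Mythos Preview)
Your approach has a genuine gap at precisely the point you flagged. The inequality $\sum_{i=0}^\ell d^+(Y_i) \leq \sum_{i=0}^\ell d^-(X_i)$ does not follow from the charging you sketch, and in fact need not hold. The asymmetry is this: a hyperarc has a unique head but many tails. In the safe-source proof the left side is $\sum d^-(Y_i)$, and a hyperarc contributes to at most one term because its single head lies in at most one of the disjoint $Y_i$. In your mirrored version the left side is $\sum d^+(Y_i)$, and a hyperarc $(Z,v)$ contributes to every $d^+(Y_i)$ with $Z\cap Y_i\neq\emptyset$ and $v\notin Y_i$; since the tail set $Z$ can meet several of the disjoint $Y_i$'s, one hyperarc may contribute more than once. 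Concretely, if $X_1,X_2,X_3$ are pairwise disjoint (so $Y_i=X_i$) and a hyperarc has one tail in $Y_1$, one tail in $Y_2$, and head in $Y_3$, it contributes $2$ to the left but only $1$ to the right (the head lies in $X_3$ and in no other $X_j$). Your rule does locate, for each such $i$, some $j\neq i$ with $v\in X_j$ and a tail outside $X_j$, but distinct $i$'s can all point to the same $j$, while the contribution to $d^-(X_j)$ remains $1$. So the sentence ``each hyperarc contributes at most $1$ to the left side'' is simply false here.

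The paper anticipates exactly this obstruction (it remarks that the two proofs cannot be interchanged ``because of asymmetry of directed hypergraphs'') and establishes the covering claim by a different route: it takes the family $\mathcal{Y}$ of inclusionwise \emph{maximal} members of $\Tight{-}$ contained in $T$, then the family $\mathcal{Z}$ of inclusionwise maximal members of $\Dangerous{-}$ contained in $T-\bigcup_{Y\in\mathcal{Y}}Y$, shows via submodularity that $\mathcal{Y}\cup\mathcal{Z}$ is a subpartition, and if it covered $T$ applies $(k+1,k+1)$-partition-connectivity to the partition $\mathcal{P}=\mathcal{Y}\cup\mathcal{Z}\cup\{\bar T\}$ together with the identity $e_\Hg(\mathcal{P})=\sum_{X\in\mathcal{P}}d^-_{\vec\Hg}(X)$ to reach the contradiction $\alpha k+\beta(k+1)+k\geq(\alpha+\beta+1)(k+1)$. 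Your verification of the safe-sink conditions (a) and (b) for an uncovered vertex via \cref{clm:inter_union_tight_is_tight}(e) is correct; only the covering step needs to be replaced.
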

\begin{proof}
Let $T \in \Minimal{+}$. If $T = V$ then $t = r$ is a safe sink, so we consider $T \neq V$. Let $\mathcal{Y} = \{Y_1, \ldots, Y_\alpha\}$ be the inclusionwise maximal vertex sets that are in $\Tight{-}$ and contained in $T$. Using \cref{clm:inter_union_tight_is_tight}, if two members of $\mathcal{Y}$ did cross then their union would be in $\Tight{-}$, thus the family $\mathcal{Y}$ is a subpartition.

Let $\mathcal{Z} = \{Z_1, \ldots, Z_\beta\}$ be the inclusionwise maximal vertex sets that are in $\Dangerous{-}$ and contained in $T - \bigcup_{Y \in \mathcal{Y}} Y$.
We show that $\mathcal{Z}$ is a subpartition; suppose for a contradiction that this is not the case. Then there exist distinct indices $i \neq j$ such that $Z_i \cap Z_j \neq \emptyset$. If $d^-(Z_i \cap Z_j) = k$ then $Z_i \cap Z_j \subseteq Y_{\ell} \in \mathcal{Y}$ for some $\ell$, contradicting that $Z_i \subseteq T - \bigcup_{Y \in \mathcal{Y}} Y$. Similarly, if $d^-(Z_i \cup Z_j) = k$ then $Z_i \cup Z_j \subseteq Y_{\ell} \in \mathcal{Y}$ for some $\ell$, contradicting that $Z_i \subseteq T - \bigcup_{Y \in \mathcal{Y}} Y$. Hence, by the submodularity of $d^-$ and $\lambda(\vec{\mathcal{H}})=k,$
\[
    2(k+1) = d^-(Z_i) + d^-(Z_j) \geq d^-(Z_i \cap Z_j) + d^-(Z_i \cup Z_j) \geq 2(k+1),
\]
so $d^-(Z_i \cup Z_j) = k+1$, which contradicts the maximality of $Z_i$.

\begin{claim}
\label{claim1_safe_sink}
    The family $\mathcal{Y} \cup \mathcal{Z}$ does not cover all vertices of $T$.
\end{claim}
\begin{claimproof}
Suppose this is not the case. Then $\mathcal{P} = \mathcal{Y} \cup \mathcal{Z} \cup \{\bar{T}\}$ is a partition of $V$. Notice that $d^-(\bar{T}) = d^+(T) = k$ and $e_\Hg(\mathcal{P}) = \sum_{X \in \mathcal{P}} d^-(X)$. Then, by $(k+1,k+1)$-partition-connectivity of $\mathcal{H}$, we have a contradiction:
    $\alpha k + \beta (k+1) + k = e_\Hg(\mathcal{P}) \geq |\mathcal{P}|(k+1) = (\alpha + \beta + 1)(k+1).$
\end{claimproof}

By \cref{claim1_safe_sink}, there exists a vertex $t \in T - \bigcup_{Y \in \mathcal{Y}} Y - \bigcup_{Z \in \mathcal{Z}} Z$. We first show that any set $X \in \Tight{-}$ that contains $t$ also contains $T$. If $X \subseteq T$ then $X \subseteq Y_i$ for some $i$, contradicting the choice of $t$. We hence suppose that $X - T \neq \emptyset$. If $T - X \neq \emptyset$ then by 
\cref{clm:inter_union_tight_is_tight}(c), we have  $T-X \in \Tight{+}$ which contradicts the minimality of $T$. This proves $T \subsetneq X$.

We now show that any set $X \in \Dangerous{-}$ such that $t \in X$ and $T - X \neq \emptyset$ contains a set $Y \subseteq X - t$ such that $Y \in \Tight{-}$. First, suppose $X \subsetneq T$. Since $t \in X \subsetneq T$, $X \in \Dangerous{-}$, and $t \not\in \bigcup_{Z \in \mathcal{Z}}Z$, we get that  $X$ is not contained in $T - \bigcup_{Y \in \mathcal{Y}} Y$. Thus, $X$ must intersect a set $Y_i$ for some $i$. By the submodularity of $d^-$, $\lambda(\vec{\Hg}) = k$, and the maximality of $Y_i$, we have 
\begin{equation*}
(k+1) + k = d^-(X) + d^-(Y_i) \geq d^-(X \cap Y_i) + d^-(X \cup Y_i) \geq k + (k+1).
\end{equation*}
 Therefore, $X \cap Y_i \in \Tight{-}.$
 Finally, suppose that $X - T \neq \emptyset$. Then, by the minimality of $T$ and \cref{clm:inter_union_tight_is_tight}(e), we have $X - T \in \Tight{-}$, proving $t$ is a safe sink in $T$.
\end{proof}

For $v \in V$, $Q^v_-$ denotes the inclusion-wise minimal set of $\Tight{-}$ containing $v$ and $Q^v_+$ denotes the inclusion-wise minimal set of $\Tight{+}$ containing $v$. Notice that there is no ambiguity since, by \cref{clm:inter_union_tight_is_tight}(a), the sets $Q^v_-$ et $Q^v_+$ are unique.

\begin{lemma}[see {\cite[Lemma 4.12]{ito2022monotone}}]
\label{lem:path_in_qsp_qtm}
\mbox{}
\begin{description}
    \item[(a)] For every $s \in V$ and $t \in Q^s_+$, there exists an $(s,t)$-hyperpath $P$ that does not leave $Q^s_+$.
    \item[(b)] For every $t \in V$ and $s \in Q^t_-$, there exists an $(s,t)$-hyperpath $P$  that does not leave $Q^t_-$.
\end{description}
\end{lemma}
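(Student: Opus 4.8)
The plan is to derive both parts from a single reachability argument: the point is that $Q^s_+$ (resp.\ $Q^t_-$) is, by \cref{clm:inter_union_tight_is_tight}, the unique inclusionwise minimal member of $\Tight{+}$ (resp.\ $\Tight{-}$) containing $s$ (resp.\ $t$), so it suffices to show that the set of vertices reachable from $s$ by hyperpaths that stay inside $Q^s_+$ is itself tight and therefore, by minimality, equals $Q^s_+$. I will carry out (a) in detail and indicate the modification needed for (b).

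For (a), fix $s$ and write $Q := Q^s_+$. If $s = t$ the empty hyperpath works, and if $Q = V$ the requirement of not leaving $Q$ is vacuous, so the claim reduces to $\vec{\Hg}$ being strongly connected (a nonempty proper set not left by any hyperarc would have out-degree $0$, contradicting $k$-hyperarc-connectivity); hence assume $s \neq t$ and $Q \neq V$, so $Q \subseteq V - r$ and $d^+(Q) = k$. Let $Z$ be the least subset of $V$ with $s \in Z$ that is closed under the rule: if $w \in Z$ and $(A,a)$ is a hyperarc with $w \in A$ and $a \in Q$, then $a \in Z$. An induction on hyperpath length (appending one hyperarc at a time and shortcutting to delete repeated heads) shows that $Z$ equals $\{s\}$ together with all heads $a$ that are the endpoint of some $(s,a)$-hyperpath whose trimming lies in $Q$; in particular $Z \subseteq Q$.

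I then claim $Z \in \Tight{+}$. Since $s \in Z \subseteq Q \subseteq V - r$, $k$-hyperarc-connectivity gives $d^+(Z) \geq k$. Conversely, take a hyperarc $(Y,v)$ counted in $d^+(Z)$, so $v \notin Z$ and $Y \cap Z \neq \emptyset$; if $v$ lay in $Q$ the closure rule would put $v$ into $Z$, a contradiction, so $v \notin Q$, and since $Z \subseteq Q$ we also have $Y \cap Q \neq \emptyset$, so $(Y,v)$ is counted in $d^+(Q)$. Hence $d^+(Z) \leq d^+(Q) = k$, so $d^+(Z) = k$ and $Z \in \Tight{+}$. As $s \in Z$ and $Q^s_+$ is the minimal member of $\Tight{+}$ containing $s$, we get $Q \subseteq Z$, hence $Z = Q$; in particular $t \in Q = Z$, which yields the desired hyperpath.

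Part (b) is dual but not a literal reversal, because a hyperarc has a single head and possibly many tails: forward reachability through $(A,a)$ is triggered by one tail being reached, whereas backward reachability forces all tails to be reachable once the head is. Concretely, fix $t$, set $Q := Q^t_-$ (the case $Q = V$ handled as above), and let $Z$ be the least subset of $V$ with $t \in Z$ closed under: if $b \in Z$ and $(B,b)$ is a hyperarc, then every vertex of $B \cap Q$ is in $Z$. As before, $Z$ is $\{t\}$ together with the start vertices of $(\cdot,t)$-hyperpaths not leaving $Q$, so $Z \subseteq Q$; and if $(Y,v)$ is counted in $d^-(Z)$, i.e.\ $v \in Z$ and $Y - Z \neq \emptyset$, then the closure rule gives $Y \cap Q \subseteq Z$, whence $Y - Z = Y - Q \neq \emptyset$, and with $v \in Z \subseteq Q$ this hyperarc is counted in $d^-(Q)$; thus $d^-(Z) \leq d^-(Q) = k \leq d^-(Z)$, so $Z \in \Tight{-}$, and minimality of $Q^t_-$ forces $Z = Q \ni s$. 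The step that needs care in both parts is exactly this identification of $Z$ with a set of hyperpath endpoints: appending or prepending a hyperarc only produces a walk, and one must verify that shortcutting it to a genuine hyperpath keeps the whole trimming inside $Q$. This is routine, and I expect it to be the only place where the head/tail asymmetry between (a) and (b) really intervenes.
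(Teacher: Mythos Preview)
Your proof is correct and follows essentially the same approach as the paper's: both identify a reachability set $Z$ with $s \in Z \subseteq Q^s_+$ (resp.\ $t \in Z \subseteq Q^t_-$) such that every hyperarc leaving (resp.\ entering) $Z$ also leaves (resp.\ enters) $Q$, deduce $d^+(Z) \le d^+(Q) = k$ (resp.\ $d^-(Z) \le d^-(Q) = k$) so that $Z$ is tight, and invoke minimality of $Q$ to force $Z = Q$. The paper phrases this as a short contradiction argument, while you construct $Z$ explicitly as a closure and spell out the identification of $Z$ with hyperpath endpoints (including the walk-to-hyperpath shortcutting), but the underlying idea is the same.
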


\begin{proof}
(a) 
Suppose that the claim does not hold, so there exist $s \in V$ and $t \in Q^s_+$ such that any $(s,t)$-hyperpath leaves $Q^s_+$. Then there exists a vertex set $s \in Z \subseteq Q^s_+-t$ such that any hyperarc that leaves $Z$ also leaves $Q^s_+$. 
Therefore, we have $k \ge d^+(Q^s_+) \geq d^+(Z) \geq k$, which implies that $Z \in \Tight{+}$, contradicting the minimality of $Q^s_+$.

(b) can be proved similarly as (a).
Suppose that the claim does not hold, so there exist $t \in V-r$ and $s \in Q^t_-$ such that any $(s,t)$-hyperpath leaves $Q^t_-$. Then there exists a vertex set $t \in Z \subseteq Q^t_--s$ such that any hyperarc that enters $Z$ also enters $Q^t_-$. Therefore, we have  $k \ge d^-(Q^t_-) \geq d^-(Z) \geq k$, which shows that $Z\in \Tight{-}$, contradicting the minimality of $Q^t_-$.
\end{proof}

\begin{lemma}
\label{lem:safe_to_safe_kp1}
Let $R \in \R, S \in \Minimal{-}, T \in \Minimal{+}$ such that $S, T \subseteq R$. Furthermore, let $s$ be a safe source in $S$, and $t $  a safe sink in $T$. Then for all sets $X \subseteq V - r$ such that $s \in X, t \not\in X$, we have $d^+(X) \geq k + 1$, and for all sets $X \subseteq V - r$ such that $t \in X, s \not\in X$, we have $d^-(X) \geq k + 1$. 
\end{lemma}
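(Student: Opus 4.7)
The plan is to prove both statements by contradiction; they are symmetric, so I focus on the first and indicate how the second is obtained. Suppose there exists $X \subseteq V - r$ with $s \in X$, $t \notin X$, and $d^+(X) \leq k$. Since $\vec{\Hg}$ is $k$-hyperarc-connected, $d^+(X) = k$, so $X \in \Tight{+}$. Applying condition (a) of ``safe source'' to $s \in X \in \Tight{+}$ yields $S \subsetneq X$. The remainder of the argument combines this with the hypotheses $S, T \subseteq R$, the minimality of $T$ in $\Minimal{+}$, and the minimality of $R$ in $\R$ to reach a contradiction.

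First I compare $X$ with $T$. Both lie in $\Tight{+}$ and in $V - r$, so in particular $T \cup X \neq V$. If $T$ and $X$ cross, then by \cref{clm:inter_union_tight_is_tight}(b) $T \cap X \in \Tight{+}$; but $t \in T \setminus X$ forces $T \cap X \subsetneq T$, contradicting the minimality of $T$ in $\Minimal{+}$. The case $T \subseteq X$ is excluded directly since it would put $t$ in $X$. Hence either $X \subseteq T$ or $T \cap X = \emptyset$.

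Next I compare $X$ with $R$. Since $S \subsetneq X$ and $S \subseteq R$ we have $S \subseteq R \cap X$, so $R \cap X \neq \emptyset$, and $R \cup X \neq V$ as before. Note also that $X \in \Tight{+}$ contains $S \in \Tight{-}$, so $X$ is itself a candidate for $\R$. The case $R \subseteq X$ is impossible because $t \in T \subseteq R \subseteq X$; and if $X \subseteq R$ then $X \subsetneq R$ (since $t \in R \setminus X$), contradicting the minimality of $R$. It remains to handle the crossing case, where the analysis splits according to the type of $R$. If $R \in \Tight{+}$, then \cref{clm:inter_union_tight_is_tight}(b) gives $R \cap X \in \Tight{+}$, a proper subset of $R$ containing $S \in \Tight{-}$, which again contradicts the minimality of $R$. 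If instead $R \in \Tight{-}$, then \cref{clm:inter_union_tight_is_tight}(c) yields $R - X \in \Tight{-}$; in this subcase crossing forces $X \not\subseteq R$, hence $X \not\subseteq T$ (as $T \subseteq R$), so by the dichotomy of Step~2 we must have $T \cap X = \emptyset$ and therefore $T \subseteq R - X$. Thus $R - X$ is an $\R$-candidate strictly contained in $R$, again contradicting minimality.

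The second statement is obtained by a completely symmetric argument, swapping the roles of $s \leftrightarrow t$, $S \leftrightarrow T$, $\Tight{-} \leftrightarrow \Tight{+}$, and $\Minimal{-} \leftrightarrow \Minimal{+}$, and invoking condition (a) of ``safe sink'' for $t$ together with parts (a) and (c) of \cref{clm:inter_union_tight_is_tight}. The main technical obstacle is the bookkeeping in the crossing case when $R$ and $X$ are tight sets of opposite types: one must invoke the dichotomy from Step~2 and use $X \not\subseteq R$ (hence $X \not\subseteq T$) to force $T \cap X = \emptyset$, after which each subcase collapses cleanly into a violation of the minimality of $R$.
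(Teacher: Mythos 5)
Your proposal is correct and follows essentially the same route as the paper's proof: assume a violating tight set $X$, use condition (a) of safe source/sink to get $S \subsetneq X$ (resp.\ $T \subsetneq X$), then uncross $X$ with $T$ (resp.\ $S$) and with $R$, splitting on whether $R \in \Tight{-}$ or $R \in \Tight{+}$, and contradict the minimality of $T$, $S$, or $R$ via \cref{clm:inter_union_tight_is_tight}. The only difference is organizational (you establish the $X$ vs.\ $T$ dichotomy up front, while the paper first rules out $X \subseteq R$), and the symmetric treatment of the second statement matches the paper's case (b).
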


\begin{proof}
If the claim does not hold then, by $\lambda(\vec{\mathcal{H}})=k,$ there exists $X \subseteq V - r$ such that either (a) $s \in X, t \not\in X$ and $X \in \Tight{+}$ or (b) $t \in X, s \not\in X$ and $X \in \Tight{-}$.

\begin{itemize}
    \item[(a)] By $S \in \Tight{-}$, $s \in X \in \Tight{+}$ and since $s$ is a safe source, we have $S \subsetneq X$. 
As $t \in R - X$ and $R \in \R$, we have that $X \nsubseteq R$ which implies $X - R \neq \emptyset$.
\begin{itemize}
    \item[-] If $R \in \Tight{-}$ then by $t \in R - X$, $X \in \Tight{+}$ and \cref{clm:substraction_tight_is_tight} on $X$ and $R$, we get that $R - X \in \Tight{-}$. If $X \cap T \neq \emptyset$, then, by \cref{clm:inter_union_tight_is_tight}, we get that $X \cap T \in \Tight{+}$, contradicting the minimality of $T$. Hence $T \subseteq R - X$. This, by $T \in \Tight{+}$ and $R - X \in \Tight{-}$, contradicts that $R \in \R$.
    \item[-] If $R \in \Tight{+}$, by $X \in \Tight{+}$ and \cref{clm:inter_union_tight_is_tight}, we get that $X \cap R \in \Tight{+}$. This, by $S \in \Tight{-}$, $S \subseteq R \cap X$ and $t \in R - X$, contradicts that $R \in \R$.
\end{itemize}

    \item[(b)] By $T \in \Tight{+}, t \in X \in \Tight{-}$ and since $t$ is a safe sink, we have $T \subsetneq X$. As $s \in R - X$ and $R \in \R$, we have that $X \not\subseteq R$ which implies $X - R \neq \emptyset$.
\begin{itemize}
    \item[-] If $R \in \Tight{-}$ then by $X \in \Tight{-}$ and \cref{clm:inter_union_tight_is_tight}, we get that $X \cap R \in \Tight{-}$. This, by $T \in \Tight{+}$, $T \subseteq X \cap R$ and $t \in R - X$, contradicts that $R \in \R$.
    \item[-] If $R \in \Tight{+}$, then by $s \in R - X$, $X \in \Tight{-}$ and \cref{clm:substraction_tight_is_tight} on $X$ and $R$, we get that $R - X \in \Tight{+}$. If $X \cap S \neq \emptyset$, then, by \cref{clm:inter_union_tight_is_tight}, we get that $X \cap S \in \Tight{-}$, contradicting the minimality of $S$. Hence $S \subseteq R - X$. This, by $S \in \Tight{-}$ and $R - X \in \Tight{+}$, contradicts that $R \in \R$.
\end{itemize}
\end{itemize}
\end{proof}

\section{Finding an admissible hyperpath}
\label{sec:finding}

In this section we present two algorithms to find an admissible hyperpath in a set $R$ of $\R$. We use \cref{alg:adm-path-intight} when $R$ is in $\Tight{-}$ and \cref{alg:adm-path-outtight} when $R$ is in $\Tight{+}$.

\medskip

\cref{alg:adm-path-intight} finds an $(s,t)$-hyperpath $P$ such that its trimming $P'$ neither leaves $R$ nor any set of $\Tight{+}$ it enters.
The algorithm works as follows. First, we choose a minimal set $S$ contained in $R$ (possibly $R$ itself), such that $S \in \Minimal{-}$ as well as a safe source $s$ in $S$.
We keep track of a set $Z$ of vertices we already explored (that contains $s$ initially), a \emph{search $s$-out-arborescence} $F$ and a set $V'$ of vertices that will potentially be explored and which is initialized to $R$.
While there is a hyperarc $(X, v)$ leaving $Z$ and whose head $v$ is in $V'$, we pick an arbitrary vertex $u \in X \cap Z$. This vertex $u$ will serve as the tail of the trimmed hyperarc. Note that $u$ exists by the choice of $(X,v)$. We mark the head vertex $v$ as explored, trim the hyperarc $(X, v)$ into the arc $uv$, and add it to $F$.
We ensure that if our search enters a set in $\Tight{+}$, we do not leave that set (see Line 11 of \cref{alg:adm-path-intight}).
We let $T$ be the last set in $\Tight{+}$ entered. We will see that $T \in \Minimal{+}$ and we find a safe sink $t$ in $T$.
Finally, we let $P'$ be the unique directed $(s, t)$-path in $F$ denoted by $F[s, t]$. Notice that $P'$ is the trimming of an $(s, t)$-hyperpath $P$ and the vertex set of $P'$ is contained in $R$. The algorithm returns $S, T, s, t, P$ and terminates.

\begin{algorithm}[H]
\caption{Admissible hyperpath in $R \in \Tight{-}$}\label{alg:adm-path-intight}
\KwData{A set $R \in \R \cap \Tight{-}$.}
\KwResult{A safe source $s$ in $S\in \Minimal{-}$ with $S\subseteq R$, a safe sink $t$ in $T\in\Minimal{+}$ with $T\subseteq R$, and an $(s,t)$-hyperpath $P$ that does not leave $R$.}
Take a set $S \in \Minimal{-}$ with $S \subseteq R$\;
Take a safe source $s$ in $S$\;
$Z := \{s\}$\;
$F := (Z, \emptyset)$\;
$V' := R$\;
\While{an $(X, v)$ hyperarc exists in $\vec{\Hg}$ such that $v\in V'-Z$ and $X \cap Z\neq\emptyset$}
	{
    Take an arbitrary vertex $u \in X \cap Z$\;
    $Z := Z \cup \{v\}$\;
    $F := F + uv$\;
    \If{$Q^v_+ \subsetneq V'$}
    	{
        $V' := Q^v_+$\;
    	}
	}
$T := V'$\;
Take a safe sink $t$ in $T$\;
$P' := F[s, t]$\;
Let $P$ be the hyperpath in $\vec{\Hg}$ corresponding to $P'$\;
Return $S,T,s,t,P$\;
\end{algorithm}

\medskip

We apply \cref{alg:adm-path-intight} to prove the following lemma that we need in order to show the existence of an admissible path in case of $R \in \R \cap \Tight{-}$. 

\begin{lemma}
\label{lem:algo_intight}
    If $R \in \R \cap \Tight{-}$ then there exist $S \in \Minimal{-}$, $T \in \Minimal{+}$,  a safe source $s$ in $S$, a safe sink $t$ in $T,$ and an $(s,t)$-hyperpath $P$ such that  $S \subseteq R$, $T \subsetneq R$,  $P$ does not leave $R$,  $Q^t_- = R$ and $\delta^+_{P'}(X) = \emptyset$ for every $X \in \Tight{+}$ with $s,t \not\in X$  where $P'$ is the unique directed $(s,t)$-path obtained from $P$ by trimming.
\end{lemma}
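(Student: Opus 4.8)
The plan is to run Algorithm \ref{alg:adm-path-intight} on the given $R \in \R \cap \Tight{-}$ and verify that its output has all the claimed properties. First I would extract the trivial facts directly from the algorithm's construction: the set $S$ chosen in the first line lies in $\Minimal{-}$ with $S \subseteq R$ (such $S$ exists because $R$ itself, or any minimal tight-in set inside it, works), and $s$ is a safe source in $S$ by Lemma \ref{lem:safe_source_exists}. Likewise, $F$ is maintained as an $s$-out-arborescence on $Z$, so once I show $t \in Z$ at termination, $P' = F[s,t]$ is a well-defined directed $(s,t)$-path, and it is the trimming of an $(s,t)$-hyperpath $P$ by reversing the trimming operation (each arc $uv \in F$ came from a hyperarc $(X,v)$ with $u \in X$). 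Since $V'$ is only ever shrunk to sets of the form $Q^v_+ \subseteq R$ and starts at $R$, the whole arborescence stays inside $R$, so $P$ does not leave $R$.

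The substantive claims are that $T := V'$ at termination satisfies $T \in \Minimal{+}$, that $T \subsetneq R$, that $t$ is a safe sink in $T$ (so Lemma \ref{lem:safe_sink_exists} can even be invoked), that $Q^t_- = R$, and the "no forward edge" property $\delta^+_{P'}(X) = \emptyset$ for every $X \in \Tight{+}$ with $s,t \notin X$. For $T \in \Tight{+}$: the while-loop terminates because there is no hyperarc leaving $Z$ whose head is in $V' - Z$; I would argue this forces $d^+(Z) \le d^+(V')$ — every hyperarc counted in $d^+(Z)$ has its head outside $Z$ but, not being allowed, must have its head outside $V'$ too, hence is also counted in $d^+(V')$ — combined with the fact that $V'$ was last set to some $Q^v_+ \in \Tight{+}$ (or $V' = R$, but then I show $Z$ would have to be all of $R$, contradicting $t \in R-S$ being reachable... actually one handles this by noting the loop can only stop after $V'$ has been updated at least once, or $Z = R$; if $Z=R$ then $R \in \Tight{+}$ too, making $R \in \Tight{+}\cap\Tight{-}$ which via $\R$ membership and $S \subsetneq R$, $T \subsetneq R$ forces a contradiction). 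So $k \le d^+(Z) \le d^+(V') = d^+(T) = k$, giving $Z = T$ reached, in particular $t \in Z$. Minimality of $T$ in $\Tight{+}$ follows because $V'$ was always updated to the minimal tight-out set $Q^v_+$ through the head $v$; any strictly smaller tight-out set $T' \subsetneq T$ containing the last head vertex would have been chosen instead, and any tight-out set inside $T$ not containing that head still cannot be escaped by the arborescence and yields the same contradiction. That $T \subsetneq R$: if $T = R$ then $R \in \Tight{+}\cap\Tight{-}$ and $S \subsetneq R = T$ with $S \in \Tight{-}$ contradicts $R$ being inclusion-minimal in $\R$ (it would contain the member $S$ of $\Tight{-}$ and itself be in $\Tight{+}$, so a strictly smaller such set must exist, or the definition is violated) — this is the step I expect to need the most care with the exact definition of $\R$.

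For $Q^t_- = R$: by Lemma \ref{lem:safe_to_safe_kp1} applied to $R, S, T, s, t$, every $X \subseteq V-r$ with $t \in X$, $s \notin X$ has $d^-(X) \ge k+1$, so any member of $\Tight{-}$ containing $t$ must also contain $s \in S \subseteq R$; and since $R \in \Tight{-}$ contains $t$, the minimal such set $Q^t_-$ is contained in $R$; conversely I show $R \subseteq Q^t_-$ using that $R$ is minimal in $\R$ containing both an $S' \in \Tight{-}$ and a member of $\Tight{+}$ — if $Q^t_- \subsetneq R$ then $Q^t_-$ itself, being in $\Tight{-}$ and (since $T \subseteq Q^t_-$ as $T$ is the minimal tight-out set, wait, need $T \subseteq Q^t_-$: actually $t \in T \cap Q^t_-$ and $T$ minimal tight-out, so either $T \subseteq Q^t_-$ by crossing arguments via Claim \ref{clm:inter_union_tight_is_tight}, or they cross and $T \cap Q^t_-$ contradicts minimality of $T$) containing $T \in \Tight{+}$, would violate minimality of $R$ in $\R$. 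Finally the forward-edge property: suppose $uv \in P'$ with $u \notin X$, $v \in X$ for some $X \in \Tight{+}$, $s,t \notin X$; the hyperarc $(Y,v)$ giving rise to $uv$ had $v$ added to $Z$ at some iteration, at which point $V'$ became $Q^v_+ \subseteq X$ (since $Q^v_+$ is the minimal tight-out set through $v$ and $X \in \Tight{+}$ contains $v$, giving $Q^v_+ \subseteq X$ when they don't cross, else $Q^v_+ \cap X$ contradicts minimality). Hence $T = $ final $V' \subseteq X$, so $t \in T \subseteq X$, contradicting $t \notin X$. I would also note $u \ne s$ here since $s \in S$ and... actually $s \notin X$ is assumed, consistent. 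The main obstacle throughout is bookkeeping the interaction between "the last $Q^v_+$ entered" and arbitrary tight-out sets $X$ via the crossing/non-crossing case analysis of Claim \ref{clm:inter_union_tight_is_tight}, plus pinning down $T \subsetneq R$ and $Q^t_- = R$ from the precise inclusion-minimality defining $\R$.
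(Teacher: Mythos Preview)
Your overall plan is right, and your treatment of the forward-edge property $\delta^+_{P'}(X)=\emptyset$ is essentially the paper's argument. The genuine gap is in how you establish that the final $V'$ lies in $\Minimal{+}$ (and hence that $t\in Z$ and $T\subsetneq R$).

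The inequality $d^+(Z)\le d^+(V')$ does not hold. Once $V'$ has been shrunk to some $Q^v_+$, the previously explored vertices of $Z$ (in particular $s$) need not lie in $V'$; a hyperarc with a tail in $Z- V'$ and head outside $V'$ contributes to $d^+(Z)$ but not to $d^+(V')$. Consequently the chain ``$k\le d^+(Z)\le d^+(V')=k$'' collapses, and your conclusion ``$Z=T$'' is unjustified. Your fallback for the case $V'=R$ at termination is also broken: termination only says that no hyperarc leaves $Z$ with head in $R-Z$, which neither forces $Z=R$ nor gives $R\in\Tight{+}$.

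The paper sidesteps this entirely. It first picks \emph{any} $T'\in\Minimal{+}$ with $T'\subsetneq R$ (such $T'$ exists because $R\in\R\cap\Tight{-}$) and a safe sink $t'\in T'$; then, directly from the safe-sink property and \cref{clm:inter_union_tight_is_tight}(a), it gets $T'\subsetneq Q^{t'}_-\subseteq R$, whence $Q^{t'}_-=R$ by the minimality defining $\R$. Now \cref{lem:path_in_qsp_qtm}(b) guarantees an $(s,t')$-hyperpath inside $R$, so the search is forced to enter some member of $\Minimal{+}$; the last such set entered is $T$, and $V'$ cannot shrink below it. This simultaneously yields $T\in\Minimal{+}$, $T\subsetneq R$, $t\in Z$, and (rerunning the same argument with $t$ in place of $t'$) $Q^t_-=R$ --- without any appeal to \cref{lem:safe_to_safe_kp1}, which in your argument would be circular since it already presupposes $T\in\Minimal{+}$.
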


\begin{proof}
The algorithm starts by taking a safe source in $S$. Such a safe source exists by \cref{lem:safe_source_exists}. 

We show that \cref{alg:adm-path-intight} provides the required hyperpath $P$.
Let $T' \in \Minimal{+}$ with $T' \subsetneq R$ and $t'$ a safe sink in $T'$ which exists by \cref{lem:safe_sink_exists}. Then, by the definition of a safe sink and \cref{clm:inter_union_tight_is_tight}(a), $T' \subsetneq Q^{t'}_-\subseteq R.$ Thus, by minimality of $R$, we have $Q^{t'}_-=R.$ Hence, by \cref{lem:path_in_qsp_qtm}(b), $t'$ is reachable from $s$ by a hyperpath $P$  that does not leave $Q^{t'}_-$. This implies that the algorithm will eventually enter a set $T \in \Minimal{+}$, containing a safe sink $t$ by \cref{lem:safe_sink_exists}. Thus, the algorithm terminates with a safe source $s$ in $S\in \mathcal{M}_-$ with $S\subseteq R$, a safe sink $t$ in $T\in\mathcal{M}_+$ with $T\subseteq R$, and an $(s,t)$-hyperpath $P$ that does not leave $R$.

Recall that $P'=F[s,t].$
It remains to prove that for all $X \in \Tight{+}$ such that $s, t \not\in X$, $\delta^+_{P'}(X) = \emptyset$. Suppose that $\delta^+_{P'}(X) \neq \emptyset$. Since $s \not\in X$, the path $P'$ enters $X$. Let $uv$ be the first arc of $P'$ that enters $X$. Let $Y = Q^v_+$ and note that $P'$ enters $Y$. As $v \in X \cap Y, X, Y \in \Tight{+}$, we get, using \cref{clm:inter_union_tight_is_tight}(b) on $X$ and $Y$ and the minimality of $Y$, that $Y \subseteq X \subsetneq V - t$. As $Y \in \Tight{+}$ and $t \in T \in \Minimal{+}$, we get, by \cref{clm:inter_union_tight_is_tight}(b) and the minimality of $T$, that $Y$ and $T$ are disjoint. If $Y - R \neq \emptyset$ then, by $Y \in \Tight{+}$, $R \in \Tight{-}$, $t \in R - Y$ and by \cref{clm:substraction_tight_is_tight}(c), we obtain that $R - Y \in \Tight{-}$. Since $T \subseteq R - Y$, this contradicts $R \in \R$. Hence,  $Q^v_+=Y \subsetneq R$. Then, since $Q_v^+\subseteq Q_u^+$,  by the \emph{if} condition on line 10 of  \cref{alg:adm-path-intight}, we have that $F$ and hence $P'$ contains no arc leaving $Q^v_+$ and thus leaving $X$.  This contradicts $d^+_{P'}(X) \neq \emptyset$, completing the proof.
\end{proof}

\cref{alg:adm-path-outtight} finds an $(s,t)$-hyperpath $P$ such that its trimming $P'$ neither leaves $R$ nor enters any set of $\Tight{-}$ that does not contain neither $s$ nor $t$.

\begin{algorithm}[ht]
\caption{Admissible hyperpath in $R \in \Tight{+}$}\label{alg:adm-path-outtight}
\KwData{A set $R \in \R \cap \Tight{+}$}
\KwResult{A  safe source $s$ in $S\in \Minimal{-}$ with $S \subseteq R$, a safe sink $t$ in $T \in \Minimal{+}$ with $T\subseteq R$, and an $(s,t)$-hyperpath $P$ that does not leave $R$.}
Take a set $T \in \Minimal{+}$ with $T \subseteq R$\;
Take a safe sink $t$ in $T$\;
$Z := \{t\}$\;
$F := (Z, \emptyset)$\;
$V' := R$\;
\While{an $(X, v)$ hyperarc exists in $\vec{\Hg}$ such that $v\in Z$ and $(X \cap V') - Z\neq\emptyset$}{
    \While{$(X \cap V') - Z \neq \emptyset$}{
        Take $u$ in $(X \cap V') - Z$\;
        $Z := Z \cup \{u\}$\;
        $F := F + uv$\;
        \If{$Q^u_- \subsetneq V'$}{
            $V' := Q^u_-$\;
        }
    }
}
$S := V'$\;
Take a safe source $s$ in $S$\;
$P' := F[s, t]$\;
Let $P$ be the hyperpath in $\vec{\Hg}$ corresponding to $P'$\;
Return $S,T,s,t,P$\;
\end{algorithm}

\cref{alg:adm-path-outtight} works as follows. First, we choose a minimal set $T$ contained in $R$ (possibly $R$ itself), such that $T \in \Minimal{+}$ as well as a safe sink $t$ in $T$.
We keep track of a set $Z$ of vertices we already explored (that contains $t$ initially), a \emph{search $t$-in-arborescence} $F$ and a set $V'$ of vertices that will potentially be explored and which is initialized to $R$.
While there is a hyperarc $(X, v)$ entering $Z$ and whose tail vertex set $X$ contains unexplored vertices in $V'$, we consider all possible trimmings of the hyperarc $(X, v)$ into $(u, v)$, where $u \in (X \cap V') - Z$. We mark the tail vertex $u$ as explored, trim the hyperarc $(X, v)$ into the arc $uv$, and add it to $F$.
We ensure that if our search enters a set in $\Tight{-}$, we do not leave that set (see Line 11 of \cref{alg:adm-path-outtight}). We will see that the last set in $\Tight{-}$ entered by the search is in $\Minimal{-}$ and we let $S$ be that set.
We find a safe source $s$ in $S$.
Finally, we let $P'$ be the unique directed $(s, t)$-path $F[s, t]$. Notice that $P'$ is the trimming of an $(s, t)$-hyperpath $P$ and the vertex set of $P'$ is contained in $R$. The algorithm returns $S, T, s, t, P$ and terminates.
\medskip

The proof of~\cref{lem:algo_outtight} is similar to that of~\cref{lem:algo_intight}.

\begin{lemma}
\label{lem:algo_outtight}
    If $R \in \R \cap \Tight{+}$, then there exist  $S \in \Minimal{-}$, $T \in \Minimal{+}$, a safe source $s$ in $S$, a safe sink $t$ in $T$,  and an $(s,t)$-hyperpath $P$ such that  $S \subsetneq R$, $T \subseteq R$,  $P$ does not leave $R$,  $Q^s_+ = R$ and $\delta^-_{P'}(X) = \emptyset$ for every $X \in \Tight{-}$ with $s,t \not\in X$  where $P'$ is the unique directed $(s,t)$-path obtained from $P$ by trimming.
\end{lemma}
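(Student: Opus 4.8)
The plan is to mirror the proof of \cref{lem:algo_intight}, exchanging the roles of $\Tight{-}$ and $\Tight{+}$, of in-degree and out-degree, and of source and sink, while using \cref{alg:adm-path-outtight} in place of \cref{alg:adm-path-intight}. By symmetry the two algorithms build dual search structures: \cref{alg:adm-path-outtight} grows a $t$-in-arborescence $F$ starting from a safe sink $t$ in some $T\in\Minimal{+}$ with $T\subseteq R$, and shrinks $V'$ whenever the search enters a set of $\Tight{-}$, tracking the last such set as $S$.

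First I would establish that the algorithm terminates with the claimed objects. Take any $S'\in\Minimal{-}$ with $S'\subseteq R$ (which exists since $\Minimal{-}$ is a subpartition of $V$ and $R\in\R\cap\Tight{+}$ contains a member of $\Tight{-}$, hence a member of $\Minimal{-}$) and a safe source $s'$ in $S'$, which exists by \cref{lem:safe_source_exists}. By the definition of a safe source and \cref{clm:inter_union_tight_is_tight}(a) applied to $\Tight{+}$ (the safe-source condition (a) forces $S'\subsetneq Q^{s'}_+$, and $Q^{s'}_+\subseteq R$ because $s'\in R\in\Tight{+}$), we get $S'\subsetneq Q^{s'}_+\subseteq R$; by minimality of $R$ in $\R$ this yields $Q^{s'}_+=R$. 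By \cref{lem:path_in_qsp_qtm}(a) there is an $(s',t)$-hyperpath that does not leave $Q^{s'}_+=R$, so reversing it we see that $s'$ (and hence some vertex of $\Minimal{-}$ inside $R$) is reachable \emph{into} $t$ within $R$; thus the backward search in \cref{alg:adm-path-outtight} eventually enters a set $S\in\Minimal{-}$, in which a safe source $s$ exists by \cref{lem:safe_source_exists}. This gives $S\in\Minimal{-}$, $T\in\Minimal{+}$, $S\subseteq R$, $T\subseteq R$, an $(s,t)$-hyperpath $P$ not leaving $R$, and $Q^s_+=R$; combined with $T\subsetneq Q^{t}_-$... and here I would argue $S\subsetneq R$ strictly: if $S=R$ then $R\in\Minimal{-}\cap\Tight{+}$ would contain a member of $\Tight{+}$ (itself), contradicting minimality in $\R$ unless... actually the cleaner route is that $R$ contains $T\in\Minimal{+}\subseteq\Tight{+}$ and $R\in\Tight{+}$, so if $S=R$ then $R\in\Tight{-}$ too, contradicting that the degree bounds and $(k+1,k+1)$-partition-connectivity force $d^-(R)+d^+(R)\geq 2(k+1)$ with equality impossible for a set that is both tight-in and tight-out — I would spell this out using \cref{claim1_safe_source}'s inequality $d^+(S)\geq k+2$ for $S\in\Tight{-}$, $S\neq V$.

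Next I would prove the key structural property $\delta^-_{P'}(X)=\emptyset$ for every $X\in\Tight{-}$ with $s,t\notin X$. Suppose $\delta^-_{P'}(X)\neq\emptyset$; since $t\notin X$, the path $P'$ leaves $X$. Let $uv$ be the last arc of $P'$ leaving $X$ (equivalently, the first arc in the backward search order entering $X$ when we read $P'$ from $t$ back to $s$), so $v\in X$ and $u\notin X$; the backward search added $uv$ while $v\in Z$ and $u\in(X_{\text{hyper}}\cap V')-Z$. Set $Y=Q^v_-$. Since $v\in X\cap Y$ and $X,Y\in\Tight{-}$, \cref{clm:inter_union_tight_is_tight}(a) and minimality of $Y$ give $Y\subseteq X\subsetneq V-s$. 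Since $Y\in\Tight{-}$ and $s\in S\in\Minimal{-}$, \cref{clm:inter_union_tight_is_tight}(a) and minimality of $S$ force $Y$ and $S$ disjoint. If $Y-R\neq\emptyset$, then from $Y\in\Tight{-}$, $R\in\Tight{+}$, $s\in R-Y$, and \cref{clm:substraction_tight_is_tight}(c) we get $R-Y\in\Tight{+}$; since $S\subseteq R-Y$ this contradicts $R\in\R$. Hence $Y=Q^v_-\subseteq R$, i.e. $Q^v_-\subsetneq R$ (strictly, since $s\notin Y$ but $s\in R$), so at the moment $uv$ was processed the \emph{if}-test on Line~11 of \cref{alg:adm-path-outtight} had already restricted $V'\subseteq Q^v_-$ (using $Q^v_-\subseteq Q^u_-$ when the relevant tail was an earlier-explored vertex; here one must note $v$ was explored before $u$, so $Q^v_-\supseteq$ the then-current $V'$ is false — rather, when $v$ entered $Z$ the set $V'$ became $\subseteq Q^v_-$). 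Consequently every subsequently added arc, and in particular the arc of $P'$ that leaves $X$ at or after $v$ going toward $s$, stays inside $Q^v_-\subseteq X$, so $P'$ has no arc leaving $X$, a contradiction.

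The main obstacle I anticipate is bookkeeping the direction conventions correctly: \cref{alg:adm-path-outtight} explores \emph{backward} from $t$, adding hyperarcs $(X,v)$ that point into the already-explored set and recording trimmed arcs $uv$ with $u$ newly explored, so ``the first arc the search processes that touches $X$'' is read along $P'$ in the $t\to s$ direction, whereas in \cref{lem:algo_intight} the corresponding arc was ``the first arc of $P'$ entering $X$'' read in the $s\to t$ direction; getting the ``last set in $\Tight{-}$ entered'' versus ``first arc leaving $X$'' quantifiers consistent, and verifying that the Line~11 update indeed traps all later exploration inside $Q^v_-$, requires care. Everything else — existence of $s$, $t$; $Q^s_+=R$; $S\subsetneq R$, $T\subseteq R$; $P$ not leaving $R$ — is a routine dualization of the arguments already given for \cref{lem:algo_intight}.
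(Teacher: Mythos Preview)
Your overall strategy—dualizing the proof of \cref{lem:algo_intight} through \cref{alg:adm-path-outtight}—is precisely the paper's, and the existence and termination parts ($Q^{s}_+=R$, $S\subsetneq R$, $P$ not leaving $R$) are correct or routinely completed. The one concrete error is in the structural step for $\delta^-_{P'}(X)=\emptyset$: for an arc $uv$ of the directed $(s,t)$-path $P'$ that \emph{leaves} $X$, the tail $u$ lies in $X$ and the head $v$ lies outside $X$, not the reverse. Consequently the right auxiliary set is $Y=Q^{u}_-$, not $Q^{v}_-$; with your choice you cannot conclude $Y\subseteq X$ (since $v\notin X$), and the containment chain you need for the contradiction collapses.

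The paper takes $uv$ to be the first arc of $P'$ leaving $X$, sets $Y=Q^{u}_-$, and proves $Y\subseteq X$ and $Y\subsetneq R$ exactly along the lines you sketched (with $u$ in place of your $v$). In the backward search $v$ is explored before $u$; since $u$ belongs to the then-current $V'$ one gets $Q^{u}_-\subseteq Q^{v}_-$, and after $u$ is added the if-test on Line~11 forces $V'\subseteq Q^{u}_-\subseteq X$. Every vertex of $P'$ between $s$ and $u$ is explored after $u$ and therefore lies in $X$; in particular $s\in X$, contradicting $s\notin X$. Swapping the roles of $u$ and $v$ throughout your paragraph recovers exactly this argument—the very bookkeeping issue you yourself flagged as the main obstacle.
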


{
\begin{proof}
The algorithm starts by taking a safe sink in $T$. Such a safe sink exists by \cref{lem:safe_sink_exists}.

We show that \cref{alg:adm-path-outtight} provides the required hyperpath $P$.
Let $S' \in \Minimal{-}$ with $S' \subsetneq R$ and  $s'$ a safe sink in $S'$ by \cref{lem:safe_source_exists}. Then, by \cref{clm:inter_union_tight_is_tight}(a), $S' \subsetneq Q^{s'}_+\subseteq R.$ Thus, by minimality of $R$, $Q^{s'}_+=R.$ Then, by \cref{lem:path_in_qsp_qtm}(a), $t$ is reachable from $s'$ by a hyperpath $P$  that does not leave $Q^{s'}_+$. This implies that the algorithm will eventually enter a set $S \in \Minimal{-}$, containing a safe source $s$ by \cref{lem:safe_source_exists}. Thus, the algorithm terminates with a safe source $s$ in $S\in \mathcal{M}_-$ with $S \subsetneq R$, a safe sink $t$ in $T\in\mathcal{M}_+$ with $T \subseteq R$, and an $(s,t)$-hyperpath $P$ that does not leave $R$.

Recall that $P'=F[s,t].$
It remains to prove that for all $X \in \Tight{-}$ such that $s, t \not\in X$, $\delta^-_{P'}(X) = \emptyset$. Suppose that $\delta^-_{P'}(X) \neq \emptyset$. Since $t \not\in X$, the path $P'$ leaves $X$. Let $uv$ be the first arc of $P'$ that leaves $X$. Let $Y = Q^u_-$ and note that $P'$ leaves $Y$. As $u \in X \cap Y, X, Y \in \Tight{-}$, we get, by \cref{clm:inter_union_tight_is_tight}(a) on $X$ and $Y$  and the minimality of $Y$, that $Y \subseteq X \subsetneq V - t$. As $Y \in \Tight{-}$ and $s \in S \in \Minimal{-}$, we get, by \cref{clm:inter_union_tight_is_tight}(a), that $Y$ and $S$ are disjoint. If $Y - R \neq \emptyset$, then, by $Y \in \Tight{-}, R \in \Tight{+}, s \in R - Y$ and by \cref{clm:substraction_tight_is_tight}(c), we obtain that $R - Y \in \Tight{+}$. Since $S \subseteq R - Y$, this contradicts $R \in \R$. Hence,  $Q^u_-=Y \subsetneq R$. Then, since $Q^u_-\subseteq Q^v_-$, by the \emph{if} condition on line 12 of the \cref{alg:adm-path-outtight}, we have that $F$ and hence $P'$ contains no arc entering $Q^u_-$ and thus entering $X$. This contradicts $\delta^-_{P'}(X)  \neq \emptyset$, completing the proof.
\end{proof}
}

\section{Proving the admissibility of the hyperpath}
\label{sec:admissability}

In this section, we prove that Algorithms \ref{alg:adm-path-intight} and \ref{alg:adm-path-outtight} provide an admissible hyperpath.
\medskip

Let $R \in \R$ and $\Minimal{} \neq \{V\}$. By \cref{lem:algo_intight,lem:algo_outtight}, there exist $S$ in $\Minimal{-}$, $T$ in $\Minimal{+}$, with $S, T \subseteq R$, a safe source $s$ in $S$, a safe sink $t$ in $T$, and an $(s,t)$-hyperpath $P$ that does not leave $R$. Let $P'$ be the trimming of $P$ found by either \cref{alg:adm-path-intight} or \cref{alg:adm-path-outtight}. Furthermore, if $R \in \Tight{-}$, $\delta^+_{P'}(X) = \emptyset$ for all $X \in \Tight{+}$ with $s,t \not\in X$. Similarly, if $R \in \Tight{+}$, $\delta^-_{P'}(X) = \emptyset$ for all $X \in \Tight{-}$ with $s,t \not\in X$. It is clear, from the search technique of the algorithm, that $\ell := |\mathcal{A}(P)| < |V|$. For $i=0,1,\ldots,\ell,$ let $\vec{\Hg}^i$ be obtained from $\vec{\Hg}$ by reorienting the last (resp. first) $i$ hyperarcs $(A_j,a_j)$ of $P$ towards $a_{j-1}$ if $R \in \Tight{-}$ (resp. $R \in \Tight{+}$).
\medskip

We first prove that the hyperarc-connectivity never decreases when we reorient the hyperarcs of $P$.

\begin{lemma}
    For $i = 1, \ldots, \ell,~\lambda(\vec{\Hg}^i) \geq k$.
\end{lemma}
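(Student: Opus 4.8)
The plan is to track how the in-degree function changes as we reorient the hyperarcs of $P$ one at a time, and show that no set ever drops below in-degree $k$. I will focus on the case $R \in \Tight{-}$ (the case $R \in \Tight{+}$ is symmetric, reorienting from the front). Here we reorient $(A_\ell, a_\ell)$ towards $a_{\ell-1}$, then $(A_{\ell-1}, a_{\ell-1})$ towards $a_{\ell-2}$, and so on. The key observation is that when we reorient $(A_j, a_j)$ towards $a_{j-1}$, the head moves from $a_j$ to $a_{j-1}$. So for a set $X \subseteq V - r$, its in-degree $d^-(X)$ can only decrease at this step if $a_j \in X$ and $a_{j-1} \notin X$ and some tail of the hyperarc lies outside $X$ (so the hyperarc counted towards $d^-(X)$ before and does not after), i.e. exactly when the trimmed arc $a_{j-1}a_j$ of $P'$ enters $X$.

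**The main inductive claim.** I would prove by induction on $i$ that $\lambda(\vec{\Hg}^i) \ge k$, and simultaneously that every set $X$ with $d^-_{\vec{\Hg}^i}(X) = k$ (a "tight" set in the current orientation) was already tight in $\vec{\Hg}$, i.e. $X \in \Tight{-}$, and moreover $P'$ does not enter $X$ after step $i$ — equivalently, among the first $i$ reorientation steps (counting from the end), none of the arcs $a_{j-1}a_j$ for $j > \ell - i$ enters $X$. The crucial point is to understand which sets can possibly be harmed. Suppose reorienting $(A_j, a_j)$ at some step would bring $d^-(X)$ from $k+1$ down to $k$; then before this step $X$ was a dangerous set, $a_j \in X$, $a_{j-1} \notin X$, and the arc $a_{j-1}a_j$ of $P'$ enters $X$. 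I want to rule this out using the structure established earlier: $s$ is a safe source in $S$, $t$ is a safe sink in $T$, and (from \cref{lem:algo_intight}) $\delta^+_{P'}(X) = \emptyset$ for every $X \in \Tight{+}$ with $s, t \notin X$, together with \cref{lem:safe_to_safe_kp1}, which guarantees $d^+(X) \ge k+1$ whenever $s \in X, t \notin X$ and $d^-(X) \ge k+1$ whenever $t \in X, s \notin X$ — so such $X$ are not even dangerous in the relevant direction in $\vec{\Hg}$.

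**Handling the arc that enters $X$.** If the arc $a_{j-1}a_j$ enters a set $X$ that is dangerous ($d^-(X) = k+1$) in the current orientation, I would argue as follows. Since $X$ was reached by an entering arc of the trimmed path $P'$ and $P'$ starts at $s$, we have $s \notin X$ (or we handle the $s \in X$ subcase separately via \cref{lem:safe_to_safe_kp1}). If also $t \notin X$, then for the original orientation $\vec{\Hg}$: either $X \in \Tight{-}$ already — but then $\delta^+_{P'}(X) = \emptyset$ by \cref{lem:algo_intight}? No, that lemma controls sets in $\Tight{+}$; for $\Tight{-}$ sets I need the induction hypothesis that $P'$ does not re-enter currently-tight sets, so an entering arc would have made $d^-$ drop below $k$ at an earlier step, contradiction — or $X$ was dangerous in $\vec{\Hg}$ and the earlier reorientations already moved its in-degree, which I control by the same induction. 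The delicate bookkeeping is that reorientations happen from the end of $P'$ backwards, so when we process arc $a_{j-1}a_j$, the arcs $a_j a_{j+1}, \ldots$ have already been reoriented; each such later arc that was incident to $X$ has already adjusted $d^-(X)$, and I need to show the net effect keeps $X$ safe. Because $P'$ is a simple path, each vertex $a_i$ is the head of exactly one arc of $P'$, so $P'$ enters and leaves $X$ an alternating number of times, and the *first* time (in path order) $P'$ touches $X$ — which is processed *last* in our backwards sweep — is the decisive one; this is where \cref{lem:safe_to_safe_kp1} and the safe-source/safe-sink properties must close the gap. The main obstacle I anticipate is precisely this ordering argument: correctly matching entering and leaving arcs of $P'$ across a set $X$ and showing that the "dangerous from above" configuration is excluded by the combination of admissibility facts already proven, rather than getting lost in the interleaving of the $\ell$ reorientation steps. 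I would set this up by analysing $d^-_{\vec{\Hg}^i}(X) - d^-_{\vec{\Hg}}(X)$ as a telescoping sum of $\pm 1$'s indexed by arcs of $P'$ incident to $X$, and bounding it below by $-1$ only in cases already forbidden.
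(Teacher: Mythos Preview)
Your setup is correct: reorienting $(A_j,a_j)$ towards $a_{j-1}$ changes $d^-(X)$ (resp.\ $d^+(X)$) by $\pm 1$ exactly according to whether the trimmed arc $a_{j-1}a_j$ enters or leaves $X$. But you then head into an unnecessarily heavy induction with a strengthened invariant. The observation you are missing, which the paper uses and which dissolves your ``ordering obstacle'', is that after $i$ steps the reoriented hyperarcs correspond to a \emph{contiguous} subpath of $P'$ --- from $a_{\ell-i}$ to $t$ when $R\in\Tight{-}$. For a simple directed subpath from $u$ to $v$, the number of arcs leaving $X$ minus the number entering $X$ is exactly $[u\in X]-[v\in X]\in\{-1,0,1\}$. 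Hence $d^-_{\vec\Hg^i}(X)-d^-_{\vec\Hg}(X)$ and $d^+_{\vec\Hg^i}(X)-d^+_{\vec\Hg}(X)$ both lie in $\{-1,0,1\}$, determined solely by the positions of $a_{\ell-i}$ and $t$ relative to $X$. So any violating $X$ was already tight in $\vec\Hg$, and you know immediately whether $t\in X$ or $t\notin X$. No step-by-step bookkeeping or strengthened hypothesis is needed; this is what the paper's phrase ``since we are reorienting hyperarcs along a path'' encodes.

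There is also a genuine gap in your case analysis. Within $R\in\Tight{-}$ you must handle \emph{both} $d^-_{\vec\Hg^i}(X)<k$ and $d^+_{\vec\Hg^i}(X)<k$ (equivalently, sets on either side of $r$), and the two use different pieces of the structure. If $d^-$ drops, the contiguity argument gives $X\in\Tight{-}$ with $t\in X$ and $a_{\ell-i}\notin X$; then $Q^t_-=R$ from \cref{lem:algo_intight} forces $R\subseteq X$, contradicting $a_{\ell-i}\in R\setminus X$. If $d^+$ drops, one gets $X\in\Tight{+}$ with $t\notin X$ and $\delta^+_{P'}(X)\neq\emptyset$; now \cref{lem:algo_intight} (which, as you noticed, speaks about $\Tight{+}$) forces $s\in X$, and \cref{lem:safe_to_safe_kp1} gives $d^+_{\vec\Hg}(X)\ge k+1$, contradicting tightness. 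Your confusion about ``that lemma controls sets in $\Tight{+}$'' is precisely because you were trying to apply it in the wrong subcase.
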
 

\begin{proof}
    Suppose that there exists $i$ and $X \subseteq V - r$ such that either (a) $d^+_{\vec{\Hg}^i}(X) < k$ or (b) $d^-_{\vec{\Hg}^i}(X) > k$. We take the first of such $i$.

\begin{itemize}
       \item[(a)] 
    		\begin{itemize}
        			\item If $R \in \Tight{-}$ then, by $\lambda({\vec{\Hg}^{i-1}}) \geq k$, we get that $t \not\in X \in \Tight{+}$ and $d^+_{P'}(X) \neq \emptyset$. By the definition of $P$ and \cref{lem:algo_intight}, we have $s \in X$. Thus, by \cref{lem:safe_to_safe_kp1}, since we are reorienting hyperarcs along a path, we have a contradiction: $k + 1  \leq d^+_{\vec{\Hg}^{0}}(X) \leq d^+_{\vec{\Hg}^i}(X) + 1 \leq k.$
       			\item If $R \in \Tight{+}$ then, by $\lambda({\vec{\Hg}^{i-1}}) \geq k$, we get $s \in X \in \Tight{+}$. By $s \in R \in \Tight{+}$ and \cref{clm:inter_union_tight_is_tight}(b), we get that $X \cap R \in \Tight{+}$. By \cref{lem:algo_outtight}, we have $R = Q^s_+$ and $\delta^+_{P'}(X) \neq \emptyset$. Hence we have $R = Q^s_+ \subseteq X \cap R \subsetneq R$, a contradiction.
    		\end{itemize}
       \item[(b)] 
      		\begin{itemize}
        			\item If $R \in \Tight{-}$ then, by $\lambda({\vec{\Hg}^{i-1}}) \geq k$, we get $t \in X \in \Tight{-}$. By $t \in R \in \Tight{-}$ and \cref{clm:inter_union_tight_is_tight}(a), we get that $X \cap R \in \Tight{-}$. By \cref{lem:algo_intight}, we have  $R = Q^t_-$ and $\delta^-_{P'}(X) \neq \emptyset$. Hence we have $R = Q^t_- \subseteq X \cap R \subsetneq R$, a contradiction.
        			\item If $R \in \Tight{+}$, by $\lambda({\vec{\Hg}^{i-1}}) \geq k$, we get that $s \not\in X \in \Tight{-}$ and $d^-_{P'}(X) \neq \emptyset$. By the definition of $P$ and \cref{lem:algo_outtight}, we have $t \in X$. Thus, by \cref{lem:safe_to_safe_kp1}, since we are reorienting hyperarcs along a path, we have a contradiction: $k + 1\leq d^-_{\vec{\Hg}^{0}}(X) \leq d^-_{\vec{\Hg}^i}(X) + 1 \leq k.$
    		\end{itemize}
\end{itemize}
\end{proof}

Let $\vec{\Hg}' = \vec{\Hg}^{\ell}$. We define the sets $\Tight{-}', \Tight{+}'$ and $\Minimal{}'$ for $\vec{\Hg}'$ as the sets $\Tight{-}, \Tight{+}$ and $\Minimal{}$ were defined for $\vec{\Hg}$. Remark that if $R$ is in $\Tight{-}$ (resp. $\Tight{+}$) then by \cref{lem:algo_intight} (resp. \cref{lem:algo_outtight}) we get that $R$ is in $\Tight{-}'$ (resp. $\Tight{+}')$.
For $P$ to be admissible, it remains to show the following.

\begin{lemma}
\label{lem:value_decreasing}
    Either (a) $|\Minimal{}'| < |\Minimal{}|$ or (b) $|\Minimal{}'| = |\Minimal{}|$ and $\sum_{X \in \Minimal{}'} |X| > \sum_{X \in \Minimal{}} |X|$ holds.
\end{lemma}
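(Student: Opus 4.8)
The plan is to show that reorienting the hyperpath $P$ either destroys a minimal tight set without creating a new one, or relocates a minimal set to cover strictly more of $V$. First I would set up the bookkeeping: write $S_0 = S \in \Minimal{-}$ (the set at the start of $P$) and $T_0 = T \in \Minimal{+}$ (the set at the end), both inside $R$, with $s$ a safe source in $S_0$ and $t$ a safe sink in $T_0$. The key structural facts I would invoke are: (i) by \cref{lem:algo_intight,lem:algo_outtight}, $R$ remains tight of the same type in $\vec{\Hg}'$; (ii) by \cref{lem:safe_to_safe_kp1}, every $(s,t)$-separator stays strictly above connectivity $k$ on the relevant side — i.e.\ in $\vec{\Hg}$ (and hence in $\vec{\Hg}'$, since reorienting a path changes $d^+(X)$ by at most one for such $X$) no set containing $s$ but not $t$ is in $\Tight{+}$ and no set containing $t$ but not $s$ is in $\Tight{-}$; and (iii) the trimming $P'$ avoids entering/leaving any tight set of the opposite type that contains neither $s$ nor $t$.

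The heart of the argument is a ``conservation'' statement: every member of $\Minimal{}$ other than possibly $S_0$ (if $R \in \Tight{+}$, reorienting the first hyperarc of $P$ towards $s$ pulls one in-arc out of $S_0$, so $S_0$ may cease to be minimal tight) or $T_0$ (symmetrically if $R \in \Tight{-}$) survives into $\Minimal{}'$. Concretely, take $M \in \Minimal{}$, say $M \in \Minimal{-}$ with $M \neq T_0$ when $R\in\Tight{-}$, and argue $d^-_{\vec{\Hg}'}(M) = d^-_{\vec{\Hg}}(M) = k$ and $M$ is still inclusion-minimal in $\Tight{-}'$. For the degree equality: a reorientation step changes $d^-(M)$ only if the reoriented hyperarc has its head entering or leaving $M$ appropriately; using that $P'$ is an $s$–$t$ path and $M$ is a minimal tight set with $s \notin M$ or $t \notin M$ (which follows from \cref{lem:safe_to_safe_kp1} together with $M$ being minimal tight of the wrong type to contain the relevant endpoint, plus \cref{clm:inter_union_tight_is_tight} to rule out $M$ properly containing $S_0$ or $T_0$), one shows $P'$ crosses the boundary of $M$ a balanced number of times, leaving $d^-(M)$ unchanged — and the avoidance property (iii) is exactly what prevents a net increase. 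For minimality in $\Tight{-}'$: if some $M' \subsetneq M$ were newly in $\Tight{-}'$, then $d^-_{\vec{\Hg}}(M') \le k+1$, so $M' \in \Dangerous{-}$ in $\vec{\Hg}$; one then derives a contradiction using the safe-sink condition (b) applied at $t$, or \cref{clm:inter_union_tight_is_tight}, exactly as in the degree-non-decrease lemma. This gives $|\Minimal{}'| \ge |\Minimal{}| - 1$, with the single possible loss being $S_0$ or $T_0$.

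Then I would analyze the endpoint that may have changed. Suppose $R \in \Tight{+}$ (the other case is symmetric via \cref{lem:algo_intight} vs.\ \cref{lem:algo_outtight}). Reorienting $P$ from start to end turns one in-hyperarc of $S_0$ into an out-hyperarc, so $d^-_{\vec{\Hg}'}(S_0) = k+1$; hence $S_0 \notin \Tight{-}'$. Two subcases: either $S_0$ contained no proper minimal tight set below it that becomes minimal in $\Minimal{}'$, in which case $|\Minimal{}'| = |\Minimal{}| - 1$ and case (a) holds; or $S_0$ did contain such a set — but here I would instead argue the gain at the $t$-end: reorienting $P$ towards $a_{\ell-1}$ at the last step adds an in-arc to $T_0$, so $d^+_{\vec{\Hg}'}(T_0) = k+1$, i.e.\ $T_0$ is no longer minimal in $\Tight{+}'$, yet some strictly larger set containing $T_0$ (or a sibling) enters $\Minimal{+}'$. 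The cleanest route is probably: show that $\Minimal{}'$ contains all of $\Minimal{} \setminus \{S_0\}$ plus a set $S_0'$ with $S_0 \subsetneq S_0'$ (coming from the $s$-out-arborescence structure / \cref{lem:algo_outtight}, since $Q^s_+ = R$ forces the new minimal tight set through $s$ to be larger), which yields case (b) when $|\Minimal{}'| = |\Minimal{}|$, and case (a) otherwise.

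The main obstacle I anticipate is the degree-equality/minimality argument for the surviving sets — in a hypergraph, a single reorientation can affect $d^-(M)$ in subtler ways than in graphs (a hyperarc with several tails straddling $\partial M$), so the counting that ``an $s$–$t$ path crosses $\partial M$ a net-zero number of times'' needs the trimming $P'$ and the avoidance property from \cref{lem:algo_intight,lem:algo_outtight} to control exactly which boundary crossings are ``real'' reorientations affecting $M$. I expect this to require a careful case split on whether the reoriented hyperarc's head lies in $M$, its new head lies in $M$, and whether the old/new tail sets meet $V - M$ — mirroring the proof of the preceding non-decrease lemma but tracking equality rather than inequality. Everything else (the at-most-one-loss bookkeeping and the identification of the larger replacement set) should follow routinely from \cref{clm:inter_union_tight_is_tight}, the safe-source/safe-sink definitions, and \cref{lem:safe_to_safe_kp1}.
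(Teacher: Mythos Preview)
Your plan has the bookkeeping running in the wrong direction. The conservation statement you propose --- that every $M \in \Minimal{}$ other than $S_0$ or $T_0$ survives into $\Minimal{}'$ --- would, even if true, only yield $|\Minimal{}'| \ge |\Minimal{}| - 1$, which is the opposite of what the lemma needs. To conclude $|\Minimal{}'| \le |\Minimal{}|$ you must bound how many \emph{new} sets can appear in $\Minimal{}'$, and your outline never touches $|\Minimal{}' \setminus \Minimal{}|$. When you write ``show that $\Minimal{}'$ contains all of $\Minimal{} \setminus \{S_0\}$ plus a set $S_0'$ with $S_0 \subsetneq S_0'$'', the word ``plus'' has to mean ``plus at most one set'', and nothing in your sketch gives that ``at most''. (Indeed, under your containment you would get $|\Minimal{}'| \ge |\Minimal{}|$, so case~(a) could never occur.) Separately, the conservation claim itself is shakier than you suggest: even if $d^-(M)$ is unchanged, a proper subset of $M$ could become tight after reorientation, knocking $M$ out of $\Minimal{}'$; your appeal to the safe-sink condition~(b) concerns dangerous sets through $t$, not subsets of an arbitrary $M$.

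The paper's proof works from the other side. It does not try to track survivors in $\Minimal{}$; instead it analyses $\Minimal{}' \setminus \Minimal{}$ directly, proving two claims: (i) if $X \in \Minimal{}' \setminus \Minimal{}$ was already tight in $\vec{\Hg}$ then $X = R$; (ii) if $X \in \Minimal{}' \setminus \Minimal{}$ was not tight in $\vec{\Hg}$ then $S \subsetneq X \subsetneq R$ or $T \subsetneq X \subsetneq R$. Either way every new minimal set strictly contains $S$ or $T$, and since $\Minimal{}'$ is a subpartition the new sets are pairwise disjoint, so there is an injection $\Minimal{}' \setminus \Minimal{} \hookrightarrow \{S,T\} \cap (\Minimal{} \setminus \Minimal{}')$. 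That gives $|\Minimal{}' \setminus \Minimal{}| \le |\Minimal{} \setminus \Minimal{}'|$ outright, and in the equality case the strict containments force the total coverage to grow. Your ingredients (i)--(iii), the safe-source/safe-sink properties, and \cref{lem:safe_to_safe_kp1} are precisely what drive these two claims --- but they must be applied to a hypothetical $X \in \Minimal{}'$, not to an old $M \in \Minimal{}$.
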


\begin{proof}
If $T \neq R \neq S$ then, by \cref{clm:substraction_tight_is_tight}(c) and the minimality of $S$ and $T$, we get that $S$ and $T$ are disjoint. Hence, since $s \in S$ and $t \in T$, the path $P'$ leaves $S$ and enters $T$. If $S = R$ then, since $s$ is a safe source in $R$, $T \in \Tight{+}$ and $T \subsetneq R$, we have $s \not\in T$. Hence, since $t \in T$, the path $P'$ enters $T$. If $T = R$, then, since $t$ is a safe sink in $R$, $S \in \Tight{-}$ and $S \subsetneq R$, we have $t \not\in S$. Hence, since $s \in S$, the path $P'$ leaves $S$.
\noindent
Since $\Hg$ is $(k+1, k+1)$-partition-connected, we have
$d^-_{\vec{\Hg}'}(T) = d_\Hg(T) - d^+_{\vec{\Hg}'}(T) \geq (2k+2) - (k + 1) = k+1$ if $P'$ enters $T$ and 
$d^+_{\vec{\Hg}'}(S) = d_\Hg(S) - d^-_{\vec{\Hg}'}(S) \geq (2k+2) - (k + 1) = k+1$ if $P'$ leaves $S$.
Thus, $S \in \Minimal{} - (\Tight{-}' \cup \Tight{+}')$ if $P'$ leaves $S$, and $T \in \Minimal{} - (\Tight{-}' \cup \Tight{+}')$ if $P'$ enters $T$.
We now characterize every element of $\Minimal{}' - \Minimal{}$ in the following two claims.

\begin{claim}
\label{lem:new_min_is_R}
    If $X \in (\Tight{-} \cup \Tight{+}) \cap (\Minimal{}' - \Minimal{})$, then $X = R$.
\end{claim}

\begin{claimproof}

Since $X \in (\Tight{-} \cup \Tight{+}) \cap (\Tight{-}' \cup \Tight{+}')$, either $s,t \not\in X$ or $s,t \in X$. In the former case, $X \in \Minimal{}'$ implies that $X \in \Minimal{}$ which is a contradiction. Hence, we consider $s,t \in X$. We distinguish some cases.

\begin{itemize}
    \item[(a)]  If $X \in \Tight{-}$ then since $t$ is a safe sink in $T$, we get $T \subsetneq X$.
\begin{itemize}
    \item[-] If $R \in \Tight{-}$ then we know, by \cref{clm:substraction_tight_is_tight}(a), that $s,t \in X \cap R \in \Tight{-}$, so we have $X \cap R \in \Tight{-}'$. Thus, by $X \in \Minimal{}'$, we obtain that $X = X \cap R \subseteq R$. Hence, by $T \in \Tight{+}$, $X \in \Tight{-}$ and $T \subsetneq X \subseteq R \in \R$, we have $X = R$.
    \item[-] If $R \in \Tight{+}$ then if $R - X \neq \emptyset \neq X - R$, by \cref{clm:substraction_tight_is_tight}(c), we get  $X - R \in \Tight{-}'$ which contradicts $X \in \Minimal{}'$. Hence, either $R \subseteq X$ or $X \subseteq R$. If $R = X$, we have a contradiction from the $(k+1, k+1)$-partition-connectivity of $\Hg$. Hence, in the former case, we get $X \not\in \Minimal{}'$ and in the latter case, by $T \in \Tight{+}$, $X \in \Tight{-}$ and $T \subsetneq X \subsetneq R$, we get $R \not\in \R$. In both cases, we have a contradiction.
\end{itemize}
    \item[(b)]  If $X \in \Tight{+}$ then since $s$ is a safe source in $S$, we get $S \subsetneq X$.
\begin{itemize}
    \item[-] If $R \in \Tight{-}$ then if $R - X \neq \emptyset \neq X - R$, by \cref{clm:substraction_tight_is_tight}(c), we get $X - R \in \Tight{+}'$ which contradicts $X \in \Minimal{}'$. Hence, either $R \subseteq X$ or $X \subseteq R$. If $R = X$, we have a contradiction from the $(k+1, k+1)$-partition-connectivity of $\Hg$. Hence, in the former case, we get $X \not\in \Minimal{}'$ and in the latter case, by $S \in \Tight{-}$, $X \in \Tight{+}$ and $S \subsetneq X \subsetneq R$, we get $R \not\in \R$. In both cases, we have a contradiction.
    \item[-] If $R \in \Tight{+}$ then  we know, by \cref{clm:substraction_tight_is_tight}(b), that  $s,t \in X \cap R \in \Tight{+}$, so we have $X \cap R \in \Tight{+}'$. Thus, by $X \in \Minimal{}'$, we obtain that $X = X \cap R \subseteq R$. Hence, by $S \in \Tight{-}$, $X \in \Tight{+}$ and $S \subsetneq X \subseteq R \in \R$, we have $X = R$.
\end{itemize}
\end{itemize}
\end{claimproof}

\begin{claim}
\label{lem:new_tight_is_contained}
    If $X \in \Minimal{}' - (\Tight{-} \cup \Tight{+})$, then $S \subsetneq X \subsetneq R$ or $T \subsetneq X \subsetneq R$.
\end{claim}

\begin{claimproof}
Since $X \in \Minimal{}' - (\Tight{-} \cup \Tight{+})$, either (a) $s \not\in X, t \in X \in \Dangerous{-}$ or (b) $t \not\in X, s \in X \in \Dangerous{+}$ holds.

\begin{itemize}
    \item[(a)] We first show that $T \subsetneq X$. Indeed, if $T - X \neq \emptyset$, then by the fact that $t$ is a safe sink, there exists $Y \subseteq X - t$ with $Y \in \Tight{-}$. Since $s,t \not\in Y$, $Y \in \Tight{-}'$ contradicting $X \in \Minimal{}'$.
	\begin{itemize}
    		\item If $R\in\Tight{-}$ then $R\in\Tight{-}'$, so by Claim 1(a) and $X\in\Minimal{-}',$ we get that $X\subseteq R.$  Since $s \in R - X$, we obtain that $X \subsetneq R$.
    		\item If $R \in \Tight{+}$ then, by $X \in \Minimal{}' \cap \Tight{-}'$, $R \in \Tight{+}'$ and \cref{clm:substraction_tight_is_tight}(c) on $X$ and $R$, we get that $X$ and $R$ are not crossing. Thus, by $s\in R-X, t\in X\cap R$ and $r\in V-(X\cup R)$,  we have $X \subsetneq R$.
	\end{itemize}
    \item[(b)] We first show that  $S \subsetneq X$.  Indeed, if $S - X \neq \emptyset$, then by the fact that $s$ is a safe source, there exists $Y \subseteq X - s$ with $Y \in \Tight{+}$. Since $s,t \not\in Y$, $Y \in \Tight{+}'$ contradicting $X \in \Minimal{}'$.
	\begin{itemize}
   		\item  If $R \in \Tight{-}$ then, by $X \in \Minimal{}' \cap \Tight{+}'$, $R \in \Tight{-}'$ and \cref{clm:substraction_tight_is_tight}(c) on $X$ and $R$, we get that $X$ and $R$ are not crossing. Thus, by $s\in R-X, t\in X\cap R$ and $r\in V-(X\cup R)$,  we have $X \subsetneq R$. 
    		\item If $R \in \Tight{+}$ then $R\in\Tight{+}'$, so by \cref{clm:inter_union_tight_is_tight}(b) and $X\in\Minimal{+}',$ we get that $X \subseteq R$. Since $t \in R - X$, we obtain that $X \subsetneq R$.
	\end{itemize}
\end{itemize}
\end{claimproof}

By \cref{lem:new_min_is_R,lem:new_tight_is_contained}, every element of $\Minimal{}' - \Minimal{}$ contains at least an element of $\Minimal{} - \Minimal{}'$ strictly ($S$ or $T$). Furthermore, by \cref{clm:inter_union_tight_is_tight,clm:substraction_tight_is_tight}, the elements of $\Minimal{}' - \Minimal{}$ are disjoint. Thus, $|\Minimal{}' - \Minimal{}| \leq |\Minimal{} - \Minimal{}'|$, and if $|\Minimal{}' - \Minimal{}| = |\Minimal{} - \Minimal{}'|$, we have $\sum_{X \in \Minimal{}'} |X| > \sum_{X \in \Minimal{}} |X|$.
\end{proof}

\section{Minimum $(s,t)$-separators in directed hypergraphs}
\label{sec:separators}

For directed graphs, it is well-known that the max flow-min cut algorithm of Edmonds-Karp \cite{edmonds1972theoretical} provides 
a minimum out-degree $(s,t)$-separator that is contained in all minimum out-degree $(s,t)$-separators.
By submodularity, for directed hypergraphs $\vec{\Hg}$ also holds that for every $s,t\in V,$ there exists a minimum out-degree $(s,t)$-separator that is contained in all minimum out-degree $(s,t)$-separators. We denote that unique set by $S_{s,t}^{\vec{\Hg}}.$ 
Similarly, there exists a minimum in-degree $(t,s)$-separator that is contained in all minimum in-degree $(t,s)$-separators. We denote that unique set by $T_{t,s}^{\vec{\Hg}}.$
The question is how to find them in polynomial time.
For this purpose we provide a simple reduction to the same problem on directed graphs.
In a similar way, a reduction for general directed capacitated hypergraphs can be obtained.
For $\vec{\Hg}=(V,\mathcal{A}),$ we consider a directed  graph that is obtained from its directed bipartite incidence graph $\vec{G} = (V', A')$ as follows. We let $V' = V \cup \{ w_{(X, v)} : (X, v) \in \mathcal{A} \}$. The arc set $A'$ contains for every $(X, v) \in \mathcal{A},$ the arc $w_{(X, v)}v$ and $|\mathcal{A}|+1$ parallel arcs $xw_{(X, v)}$ for every $x \in X$. 
Note that $|V'|=|V|+|\mathcal{A}|$ and $|A'|\le |V||\mathcal{A}|(|\mathcal{A}|+1).$ An example of this construction is shown in \cref{fig:reduction_hyperarc_to_arc}. Let $\cev{G}$ be the directed graph obtained from $\vec{G}$ by changing the orientation of all arcs.

\begin{figure}[ht]
    \centering
\begin{tikzpicture}[scale=0.75]
	\begin{pgfonlayer}{nodelayer}
		\node [style=wideBase] (0) at (-5.5, 2) {$u_1$};
		\node [style=wideBase] (1) at (-5.5, 0) {$u_2$};
		\node [style=wideBase] (2) at (-5.5, -2) {$u_3$};
		\node [style=wideBase] (3) at (-1, 0) {$v$};
		\node [style=none] (4) at (-3, 0) {};
		\node [style=none] (5) at (-2.75, 0.5) {$(X,v)$};
		\node [style=wideBase] (6) at (1, 2) {$u_1$};
		\node [style=wideBase] (7) at (1, 0) {$u_2$};
		\node [style=wideBase] (8) at (1, -2) {$u_3$};
		\node [style=wideBase] (9) at (6, 0) {$v$};
		\node [style=wideBase,inner sep=1pt] (10) at (3.5, 0) {$w_{(X,v)}$};
		\node [style=none] (11) at (0, 0) {$\implies$};

        \node [style=none] (12) at (3.25, 1.75) {\footnotesize $|\mathcal{A}|+1$};
        \node [style=none] (13) at (1.95, 0.5) {\footnotesize $|\mathcal{A}|+1$};
        \node [style=none] (14) at (3.25, -1.75) {\footnotesize $|\mathcal{A}|+1$};
	\end{pgfonlayer}
	\begin{pgfonlayer}{edgelayer}
		\draw [style=wideLine, in=180, out=0] (2) to (4.center);
		\draw [style=wideLine] (1) to (4.center);
		\draw [style=wideLine, in=-180, out=0] (0) to (4.center);
	 	\draw [style=arrow] (4.center) to (3);
		\draw [style=arrow, in=-120, out=0] (8) to (10);
		\draw [style=arrow] (7) to (10);
		\draw [style=arrow, in=120, out=0] (6) to (10);
		\draw [style=arrow] (10) to (9);
	\end{pgfonlayer}
\end{tikzpicture}
    \caption{Arcs in $\vec{G}$ incident to $w_{(X,v)}$ resulting from a hyperedge $(X,v)$ of $\vec{\Hg}$}
    \label{fig:reduction_hyperarc_to_arc}
\end{figure}
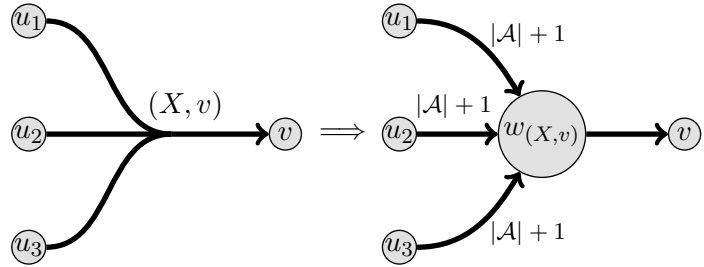

\begin{lemma}\label{separator1}
$S_{s,t}^{\vec{\Hg}}=V\cap S_{s,t}^{\vec G}$
 and $T_{t,s}^{\vec{\Hg}}=V\cap S_{t,s}^{\footnotesize{\cev{G}}}.$
\end{lemma}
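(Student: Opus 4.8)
My plan is to establish the two equalities separately, each by a double-inclusion argument relating $(s,t)$-separators in $\vec{\Hg}$ to $(s,t)$-separators in the auxiliary directed graph $\vec G$. The key observation is that the $|\mathcal{A}|+1$ parallel copies of each arc $xw_{(X,v)}$ make those arcs "uncuttable": any $(s,t)$-separator in $\vec G$ of out-degree at most $|\mathcal A|$ (and such a separator exists, since $d^+_{\vec{\Hg}}(\{s\})\le|\mathcal A|$ translates to a separator of out-degree $\le|\mathcal A|$ in $\vec G$) cannot have any of the bundles $\{xw_{(X,v)}\}$ in its out-boundary. Hence for a minimum-out-degree $(s,t)$-separator $S'$ of $\vec G$, whenever $x\in S'$ and $x\in X$ we must also have $w_{(X,v)}\in S'$; and by minimality one checks that $w_{(X,v)}\in S'$ forces every $x\in X$ to lie in $S'$ as well (otherwise $S'-w_{(X,v)}$ is a strictly smaller separator whose out-degree did not increase). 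So minimum $(s,t)$-separators of $\vec G$ are exactly the sets of the form $X'=(V\cap X')\cup\{w_{(X,v)}: X\subseteq V\cap X'\}$ for some $(s,t)$-separator $V\cap X'$ of $\vec{\Hg}$.

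**Translating the cut value.**

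Next I would show that under this correspondence $X'\leftrightarrow V\cap X'=:U$, the out-degree is preserved: $d^+_{\vec G}(X')=d^+_{\vec{\Hg}}(U)$. Indeed, the arcs leaving $X'$ in $\vec G$ are precisely the arcs $w_{(X,v)}v$ with $w_{(X,v)}\in X'$ but $v\notin X'$ — equivalently (by the previous paragraph) those hyperarcs $(X,v)$ with $X\subseteq U$ and $v\notin U$, which is exactly the set of hyperarcs counted by $d^+_{\vec{\Hg}}(U)$. (No bundle $xw_{(X,v)}$ contributes, since such a bundle leaves $X'$ only if $x\in U$ and $w_{(X,v)}\notin X'$, which would mean $d^+_{\vec G}(X')\ge|\mathcal A|+1$, contradicting minimality.) Consequently the minimum out-degree over $(s,t)$-separators of $\vec{\Hg}$ equals that over $(s,t)$-separators of $\vec G$, and the bijection above restricts to a bijection between the minimizers.

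**Finishing.**

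Finally, since $S_{s,t}^{\vec G}$ is the unique minimum $(s,t)$-separator of $\vec G$ contained in all others, its image $V\cap S_{s,t}^{\vec G}$ is a minimum $(s,t)$-separator of $\vec{\Hg}$; and it is contained in every minimum $(s,t)$-separator $U$ of $\vec{\Hg}$ because the corresponding lift of $U$ is a minimum separator of $\vec G$ and hence contains $S_{s,t}^{\vec G}$, so intersecting with $V$ gives $V\cap S_{s,t}^{\vec G}\subseteq U$. By uniqueness of $S_{s,t}^{\vec{\Hg}}$ this yields $S_{s,t}^{\vec{\Hg}}=V\cap S_{s,t}^{\vec G}$. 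The second equality follows by applying the first with roles of $s,t$ swapped and to the reversed graph $\cev G$: a minimum in-degree $(t,s)$-separator of $\vec{\Hg}$ is a minimum out-degree $(t,s)$-separator of the reverse hypergraph, whose incidence-graph construction is exactly $\cev G$, giving $T_{t,s}^{\vec{\Hg}}=V\cap S_{t,s}^{\cev G}$.

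**Main obstacle.**

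The step I expect to require the most care is the minimality argument showing $w_{(X,v)}\in S'\Rightarrow X\subseteq S'$ and, dually, $x\in S'\cap X\Rightarrow w_{(X,v)}\in S'$ — i.e. that the "mixed" sets are never minimizers. One must argue that removing or adding the single vertex $w_{(X,v)}$ changes the out-degree by at most the count of the single arc $w_{(X,v)}v$ while potentially removing a whole bundle of $|\mathcal A|+1$ arcs from the boundary, so the multiplicity choice $|\mathcal A|+1$ is exactly what forces the clean correspondence. Everything else is routine bookkeeping of which arcs cross the cut.
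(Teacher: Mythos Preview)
Your overall strategy --- set up a correspondence between separators in $\vec{\Hg}$ and in $\vec G$, show cut values match, and transport inclusion-minimality --- is sound and close to the paper's. However, your structural characterization of $S'=S_{s,t}^{\vec G}$ contains a genuine error. You claim that $w_{(X,v)}\in S'$ forces $X\subseteq S'$, arguing that otherwise $S'-w_{(X,v)}$ has no larger out-degree. This fails: if some $x_1\in X\cap S'$ exists (while some other $x_0\in X\setminus S'$ also exists), then removing $w_{(X,v)}$ from $S'$ adds the whole bundle of $|\mathcal A|+1$ parallel arcs $x_1w_{(X,v)}$ to the out-boundary. The minimality argument only yields $w_{(X,v)}\in S'\Rightarrow X\cap S'\neq\emptyset$; combined with your (correct) other direction, the right characterization is $w_{(X,v)}\in S'\Leftrightarrow X\cap(V\cap S')\neq\emptyset$ (this is precisely the paper's map $\varphi$). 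The error propagates: you then assert that $d^+_{\vec{\Hg}}(U)$ counts hyperarcs with $X\subseteq U$ and $v\notin U$, but by definition it counts those with $X\cap U\neq\emptyset$ and $v\notin U$. With the corrected characterization the cut-value translation does go through, so the repair is local, but as written both steps are wrong.

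A second, smaller gap: your reduction of the second equality to the first via ``reversing the hypergraph'' is too quick. B-hypergraphs are not symmetric under reversal (one head, many tails), and reversing $\vec G$ places the high-multiplicity bundles on the outgoing side of each $w_{(X,v)}$, which changes the analysis. The paper handles $T_{t,s}^{\vec{\Hg}}$ by a separate short argument using a different lift $\phi(W)=W\cup\{w_{(Y,v)}:Y\cup\{v\}\subseteq W\}$: in $\cev G$ the large bundles emanate from $w_{(X,v)}$, so the membership criterion for $w_{(X,v)}$ in the minimum $(t,s)$-separator of $\cev G$ becomes $X\cup\{v\}\subseteq U$ rather than $X\cap U\neq\emptyset$. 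You should carry out this case explicitly rather than invoke symmetry.
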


\begin{proof}
Let $Z'=S_{s,t}^{\vec G}$, $Z = V \cap Z'$ and $X=S_{s,t}^{\vec{\Hg}}$.  Since $Z'$ is an $(s, t)$-separator in $\vec{G}$, we have  $s \in Z\subseteq V-t$.
For a set $W\subseteq V,$ let $\varphi(W)=W\cup\{w_{(Y,v)}:(Y,v)\in\mathcal{A}, W\cap Y\neq\emptyset\}.$ 
Note that, by the definition of $Z,$ we have  $\varphi(Z)\subseteq Z'.$
Let $X'=\varphi(X)$.
Note that if an arc leaves $X'$ in $\vec{G}$ then there exists a directed hyperedge that leaves $X$ in $\vec{\Hg}$. Note also that if a directed hyperedge leaves $Z$ in $\vec{\Hg}$  then there exists an arc that leaves $Z'$ in $\vec{G}$. Then, by the definitions of $Z'$ and $X$, we have $d^+_{\vec{G}}(Z')\le d^+_{\vec{G}}(X')\le d^+_{\vec{\Hg}}(X)\le d^+_{\vec{\Hg}}(Z)\le d^+_{\vec{G}}(Z').$ It follows that equality holds everywhere, in particular we get that $X'$ is a minimum out-degree $(s,t)$-separator in $\vec{G}$ and $Z$ is a minimum out-degree $(s,t)$-separator in $\vec{\Hg}$.
Then we have $Z'\subseteq X'$ and $X\subseteq Z$. Thus, by $\varphi(Z)\subseteq Z'$, we get  $X'=\varphi(X)\subseteq\varphi(Z)\subseteq Z'\subseteq X'.$ It follows that equality holds everywhere, in particular we get that $\varphi(X)=\varphi(Z)$ and hence $X=Z.$

\medskip

Let $U'=S_{t,s}^{\footnotesize{\cev{G}}}$, $U := V\cap U'$ and $X=T_{t,s}^{\vec{\Hg}}$.    Since $U'$ is a $(t, s)$-separator in $\cev{G}$, we have that $t \in U\subseteq V-s$.     For a set $W\subseteq V,$ let $\phi(W)=W\cup\{w_{(Y,v)}:(Y,v)\in\mathcal{A}, Y\cup\{v\}\subseteq W\}.$ Note that, by the definition of $U,$ we have  $\phi(U)\subseteq U'.$
Let  $X'=\phi(X)$.
Note that if an arc leaves $X'$ in $\cev{G}$ then there exists a directed hyperedge that enters $X$ in $\vec{\Hg}$. Note also that if a directed hyperedge enters $U$ in $\vec{\Hg}$  then there exists an arc that leaves $U'$ in $\cev{G}$.
Then, by the definitions of $U'$ and $X$, we have $d^+_{{\footnotesize\cev{G}}}(U')\le d^+_{{\footnotesize\cev{G}}}(X')\le d^-_{\vec{\Hg}}(X)\le d^-_{\vec{\Hg}}(U)\le d^+_{\vec{G}}(U').$
It follows that equality holds everywhere, in particular we get that $X'$ is a minimum out-degree $(t, s)$-separator in $\cev{G}$ and $U$ is a minimum in-degree $(t, s)$-separator in $\vec{\Hg}$.
Then we have $U'\subseteq X'$ and $X\subseteq U$.
Thus, by $\phi(U)\subseteq U'$, we get  $X'=\phi(X)\subseteq\phi(U)\subseteq U'\subseteq X'.$ It follows that equality holds everywhere, in particular we get that $\phi(X)=\phi(U)$ and hence $X=U.$
\end{proof}

We can hence apply the algorithm of Edmonds-Karp \cite{edmonds1972theoretical} to compute in polynomial time the sets $S_{s,t}^{\vec G}$ and $S_{t,s}^{\footnotesize{\cev{G}}}$ and obtain, by  \cref{separator1}, the sets $S_{s,t}^{\vec{\Hg}}.$ and $T_{t,s}^{\vec{\Hg}}.$

\section{Complexity of the algorithms}
\label{sec:complexity}

In order to compute the sets $\R$, $\Minimal{-}$, and $\Minimal{+}$ used in \cref{alg:conn-augmentation-global},  we apply the  algorithms of \cref{sec:separators}. 

\begin{lemma}
\label{lem:compute_sets_polytime}
    There is an algorithm that computes the sets $\R$, $\Minimal{-}$, $\Minimal{+}$ in time polynomial in the size of $\Hg$.
\end{lemma}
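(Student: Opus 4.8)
The plan is to reduce the computation of all three families to polynomially many runs of the minimum $(s,t)$-separator procedure of \cref{sec:separators} together with elementary post-processing on polynomially many vertex sets.

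To compute $\Minimal{-}$ and $\Minimal{+}$, I would proceed as follows. For each $v \in V - r$, use the procedure of \cref{sec:separators} to obtain the set $T_{v,r}^{\vec{\Hg}}$, the unique minimal in-degree $(v,r)$-separator contained in every minimum in-degree $(v,r)$-separator. Since $\vec{\Hg}$ is $k$-hyperarc-connected we have $d^-(T_{v,r}^{\vec{\Hg}}) \ge k$; if equality holds then this set equals the well-defined (by \cref{clm:inter_union_tight_is_tight}(a)) minimal member $Q^v_-$ of $\Tight{-}$ containing $v$, and otherwise $v$ belongs to no proper member of $\Tight{-}$. The inclusion-wise minimal members of the collection of sets $Q^v_-$ obtained this way form $\Minimal{-}$ (with the convention $\Minimal{-} = \{V\}$ if the collection is empty), since any proper minimal member $M$ of $\Tight{-}$ satisfies $Q^v_- = M$ for every $v \in M$ and hence appears in the collection. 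Symmetrically, running the procedure on the minimum out-degree separators $S_{v,r}^{\vec{\Hg}}$ produces $\Minimal{+}$; the family $\Minimal{}$ is then obtained from $\Minimal{-} \cup \Minimal{+}$ by discarding non-minimal members via pairwise comparison.

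For $\R$ I would use the following structural observation. Writing $R^-_S$ (resp.\ $R^+_S$) for the minimal member of $\Tight{-}$ (resp.\ $\Tight{+}$) that contains a given set $S$ --- omitting it when no such proper member exists --- the family $\R$ is exactly the set of inclusion-wise minimal members of $\{\,R^-_S : S \in \Minimal{+}\,\} \cup \{\,R^+_S : S \in \Minimal{-}\,\}$. Indeed, any member of $\Tight{+}$ contained in a proper member $R$ of $\Tight{-}$ itself contains a member of $\Minimal{+}$; hence if $R \in \R \cap \Tight{-}$ and $S \in \Minimal{+}$ with $S \subseteq R$, then $R^-_S \subseteq R$ is again in $\Tight{-}$ and still contains $S \in \Tight{+}$, so minimality of $R$ forces $R = R^-_S$, and the case $R \in \R \cap \Tight{+}$ is symmetric; conversely every $R^-_S$ lies in $\Tight{-}$ and contains a member of $\Tight{+}$, hence contains a member of $\R$. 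Each $R^-_S$ is itself a minimum in-degree separator computation: contract $S$ to a single vertex $s^\ast$ (deleting the hyperarcs contained in $S$ and removing $s^\ast$ from the tail of each hyperarc whose head became $s^\ast$, neither of which affects $d^-(X)$ for any $X \supseteq S$), run the procedure of \cref{sec:separators} for the pair $(s^\ast, r)$ in the contracted hypergraph, and un-contract; the result is $R^-_S$ exactly when its in-degree equals $k$, and otherwise $S$ has no proper in-tight superset. As $|\Minimal{-}|, |\Minimal{+}| \le |V|$, this is again polynomially many separator computations, after which extracting the inclusion-wise minimal members yields $\R$.

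The step I expect to be the main obstacle is the computation of $\R$: one must pin down the correct finite list of candidates --- the minimal in-tight supersets of the minimal out-tight sets, and vice versa --- and argue, using the submodularity consequences gathered in \cref{clm:inter_union_tight_is_tight,clm:substraction_tight_is_tight}, that ``the minimal tight superset of a given vertex set'' is well defined and can be produced by the separator procedure after contracting that set. Checking that contraction preserves the relevant degrees and handling the corner cases in which a family degenerates to $\{V\}$ is routine bookkeeping.
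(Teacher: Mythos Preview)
Your argument is correct. For $\Minimal{-}$ and $\Minimal{+}$ you proceed essentially as the paper does, computing the sets $Q^v_-$ and $Q^v_+$ via a single minimum-separator call per vertex (the paper phrases this as ranging over all ordered pairs, but only the pairs with second coordinate $r$ are actually needed). Your treatment of $\R$, however, takes a genuinely different route. The paper observes that every $R \in \R$ already appears among the sets $Q^v_\pm$: if $R \in \R \cap \Tight{-}$, one picks $T \in \Minimal{+}$ with $T \subseteq R$, invokes \cref{lem:safe_sink_exists} to obtain a safe sink $t \in T$, and then reads off from the proof of \cref{lem:algo_intight} that $Q^t_- = R$; the case $R \in \Tight{+}$ is symmetric via \cref{lem:safe_source_exists} and \cref{lem:algo_outtight}. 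Thus the paper avoids any new construction by leaning on the safe-sink/safe-source machinery already developed for the main algorithm. You instead characterise $\R$ directly as the minimal members of $\{R^-_S : S \in \Minimal{+}\} \cup \{R^+_S : S \in \Minimal{-}\}$ and compute each $R^{\pm}_S$ by contracting $S$ and calling the separator routine once more. This is self-contained---it does not rely on \cref{lem:safe_sink_exists,lem:safe_source_exists} or on the analysis of \cref{alg:adm-path-intight,alg:adm-path-outtight}---at the cost of verifying that contraction preserves the relevant degrees (which you do). Both approaches use polynomially many separator computations; the paper's buys economy by reusing earlier lemmas, yours buys modularity.
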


\begin{proof}
    Let $\mathcal{Q} = \bigcup_{v\in V} \{Q^v_-\} \cup \{Q^v_+\}$. We may obtain $\mathcal{Q}$ in polynomial time by applying the algorithms of \cref{sec:separators} to compute $S_{u,v}^{\vec{\Hg}}$ and $T_{u,v}^{\vec{\Hg}}$ for every pair $(u, v)$ of vertices of $\vec{\Hg}$.
    
    We show that $\R, \Minimal{-}, \Minimal{+} \subseteq \mathcal{Q}$.
    If $v \in S \in \Minimal{-}$ then $Q^v_- = S$ and if $v \in T \in \Minimal{+}$ then $Q^v_+ = T$, so $\Minimal{-}, \Minimal{+} \in \mathcal{Q}$.
    Let $R \in \R$. If $R \in \Tight{-}$ then $R$ contains $T \in \Minimal{+}$ and by \cref{lem:safe_sink_exists} there exists a safe sink $t$ in $T$. We have seen in the proof of \cref{alg:adm-path-intight} that $Q^t_- = R$. Similarly, if $R \in \Tight{+}$ then $R$ contains $S \in \Minimal{-}$ and by \cref{lem:safe_source_exists} there exists a safe source $s$ in $S$. We have seen in the proof of \cref{alg:adm-path-outtight} that $Q^t_+ = R$. Therefore, $\R \subseteq \mathcal{Q}$.

    Finally, we can iteratively eliminate the sets from $\mathcal{Q}$ not in $\R, \Minimal{-}$ and $\Minimal{+}$ in polynomial time.
\end{proof}

\begin{lemma} \label{lem:polynomial}
    \cref{alg:adm-path-intight,alg:adm-path-outtight} run in time polynomial in the size of the input $\Hg$.
\end{lemma}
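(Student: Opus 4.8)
The plan is to walk through \cref{alg:adm-path-intight} and \cref{alg:adm-path-outtight} line by line and verify that every step costs polynomially many elementary operations, isolating the only genuinely non-trivial parts — the choice of a safe source and of a safe sink — and reducing these to a constant number of minimum-degree-separator computations of the kind provided by \cref{sec:separators}. At the outset one may assume that $\R,\Minimal{-},\Minimal{+}$, together with $\mathcal{Q}:=\{Q^v_-,Q^v_+:v\in V\}$, have been computed once in polynomial time (this is done in the proof of \cref{lem:compute_sets_polytime}). Then: picking a set $S\in\Minimal{-}$ (resp. $T\in\Minimal{+}$) with $S\subseteq R$ is a scan of a polynomial-size family; the test ``$Q^v_+\subsetneq V'$'' on line~10 of \cref{alg:adm-path-intight} (resp. ``$Q^u_-\subsetneq V'$'' on line~12 of \cref{alg:adm-path-outtight}) is a table lookup and a set comparison; the \textbf{while} loop of \cref{alg:adm-path-intight} inserts one new vertex into $Z$ per iteration, hence runs at most $|V|$ times, each iteration being a scan of the hyperarcs of $\vec{\Hg}$ for a suitable $(X,v)$; the nested loops of \cref{alg:adm-path-outtight} insert one new vertex into $Z$ at every inner iteration, so they run at most $|V|$ times in total, with one hyperarc scan per outer iteration; and extracting $F[s,t]$ from the arborescence $F$ and reading off the corresponding hyperpath $P$ (each trimmed arc $uv$ of $F[s,t]$ was recorded together with the hyperarc $(X,v)$, $u\in X$, it came from) is clearly polynomial. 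So the statement reduces to finding a safe source in a member of $\Minimal{-}$ and a safe sink in a member of $\Minimal{+}$ in polynomial time.

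For the safe source I would not test the definition directly but a stronger, directly checkable and still satisfiable condition extracted from the proof of \cref{lem:safe_source_exists}: \emph{every $s\in S$ that lies in no member of $\Tight{+}\cup\Dangerous{+}$ contained in $S$ is a safe source in $S$}. This is exactly the argument in that proof via \cref{clm:inter_union_tight_is_tight}(d): if $s\in X\in\Tight{+}$ then $X\not\subseteq S$, and were also $S-X\neq\emptyset$ the crossing of $X$ and $S$ would force $d^+(X)=k+1$, impossible, so $S\subsetneq X$; and if $s\in X\in\Dangerous{+}$ with $S-X\neq\emptyset$ then $X\not\subseteq S$ gives $X-S\neq\emptyset$, so $X$ and $S$ cross and $X-S\in\Tight{+}$ with $s\notin X-S\subsetneq X$ is the required witness. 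Such a vertex exists because, by \cref{claim1_safe_source}, the union of all members of $\Tight{+}\cup\Dangerous{+}$ contained in $S$ does not cover $S$ (the family $\mathcal{X}$ there has maximum coverage, so its union is precisely that union). Finally, a fixed $s\in S$ lies in some member of $\Tight{+}\cup\Dangerous{+}$ contained in $S$ if and only if $\min\{d^+_{\vec{\Hg}}(Z):s\in Z\subseteq S\}\le k+1$; contracting $V-S$ into a single vertex $t^*$ leaves $d^+$ unchanged on subsets of $S$, so this minimum is the out-degree of the minimal minimum out-degree $(s,t^*)$-separator of the contracted hypergraph, which is computable in polynomial time by \cref{separator1} and the algorithm of Edmonds--Karp. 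Trying every $s\in S$ yields a safe source.

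For the safe sink the same strategy works, but, because of the asymmetry of directed hypergraphs, the condition to be tested involves the two-stage peeling from the proof of \cref{lem:safe_sink_exists}. First compute $T':=T-\bigcup\{Y\in\Tight{-}:Y\subseteq T\}$: a vertex $v\in T$ lies in $T'$ iff $\min\{d^-_{\vec{\Hg}}(Y):v\in Y\subseteq T\}\ge k+1$, again a minimum-in-degree-separator computation, this time in the contraction of $V-T$, so $T'$ is obtained in polynomial time. The claim to establish is then: \emph{every $t\in T'$ lying in no member of $\Dangerous{-}$ contained in $T'$ is a safe sink in $T$} — verified by re-running the relevant parts of the proof of \cref{lem:safe_sink_exists}. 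Property~(a) holds because no in-tight subset of $T$ meets $T'$, so an in-tight set through $t$ must leave $T$, whence $T\subsetneq X$ by \cref{clm:inter_union_tight_is_tight}(c) and minimality of $T$. Property~(b) splits into the case $X-T\neq\emptyset$, where $X-T\in\Tight{-}$ is the witness by \cref{clm:inter_union_tight_is_tight}(e), and the case $X\subseteq T$, where $X\not\subseteq T'$ forces $X$ to meet some $Y_0\in\Tight{-}$ with $Y_0\subseteq T$, and submodularity of $d^-$ together with the fact that $t$ lies in no in-tight subset of $T$ makes $X\cap Y_0\in\Tight{-}$ the witness. Such a $t$ exists by \cref{claim1_safe_sink} (there $\bigcup\mathcal{Y}=\bigcup\{Y\in\Tight{-}:Y\subseteq T\}$ and $\bigcup\mathcal{Z}=\bigcup\{Z\in\Dangerous{-}:Z\subseteq T'\}$, since maximal members cover the same vertices as all members). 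Deciding whether a given $t\in T'$ lies in some member of $\Dangerous{-}$ contained in $T'$ is one more minimum-in-degree-separator computation, in the contraction of $V-T'$; iterating over $t\in T'$ finds a safe sink.

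I expect the safe-sink case to be the main obstacle. For the safe source, the natural one-shot sufficient condition (``avoid every in-tight or out-dangerous subset of $S$'') is both satisfiable and testable by a single separator computation. For the safe sink this one-shot condition fails; one must first strip off all in-tight subsets of $T$ and only then avoid the in-dangerous subsets of the remainder, and the real work is re-establishing that this weaker condition still forces both defining properties of a safe sink — essentially re-deriving the content of \cref{lem:safe_sink_exists} in this algorithmic form — and checking that each auxiliary minimum is a genuine $(s,t)$-separator problem in an appropriately contracted hypergraph so that the reduction of \cref{sec:separators} applies.
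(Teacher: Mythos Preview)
Your proof is correct, but it takes a genuinely different route from the paper's. The paper tests the \emph{definition} of a safe source directly: for each candidate $u\in S$ and each $v\in S-u$ it computes the minimal minimum out-degree $(u,v)$-separator $X^u_v=S^{\vec{\Hg}}_{u,v}$ in the full hypergraph and checks whether $X^u_v\in\Tight{+}$, or $X^u_v\in\Dangerous{+}$ with no out-tight subset avoiding $u$; it then dismisses the safe-sink case ``by symmetry'', which is legitimate because the \emph{definitions} of safe source and safe sink are symmetric under swapping $d^+\leftrightarrow d^-$, even though the existence proofs are not. You instead test a \emph{stronger sufficient condition} lifted from the existence proofs (\cref{lem:safe_source_exists,lem:safe_sink_exists}): for the source, ``$s$ lies in no member of $\Tight{+}\cup\Dangerous{+}$ contained in $S$''; for the sink, the two-stage peeling via $T'$. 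This forces you to handle the sink case separately (the sufficient conditions are asymmetric), and to re-derive that your conditions really imply the defining properties---which you do correctly. What your approach buys is simplicity of the test: one separator computation per candidate vertex, in a contracted hypergraph, against a single threshold $k+1$, rather than $|S|-1$ separators per candidate plus a secondary ``contains an out-tight subset'' check. What the paper's approach buys is that it recognises \emph{every} safe source/sink, not just those meeting the stricter condition, and it sidesteps the need to revisit the existence arguments.
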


\begin{proof}
Both algorithms execute a simple search which runs in polynomial time in the number of hyperedges.
It remains to show we can find a safe source and a safe sink in polynomial time.
By symmetry, we only show how to find a safe source in a given $S \in \Minimal{-}$.
For $u,v \in S$, let $X^u_v=S_{u,v}^{\vec{\Hg}}$.

\begin{claim}
The vertex $u$ is not a safe source in $S$ if and only if there exists a vertex $v \in S-u$ such that either $X^u_v\in\Tight{+}$ or $X^u_v \in \Dangerous{+}$ and $X^u_v$ does not contain a set $Z \in \Tight{+}$. 
\end{claim}

\begin{claimproof}
The sufficiency follows from the definition of a safe source. To prove the necessity, we suppose that $u$ is not a safe source in $S$ and $X^u_v \notin \Tight{+}$ for all $v\in S-u$. Then there exists by definition, for some vertex $v \in S-u$, a minimum out-degree $(u,v)$-separator $Y^u_v$ such that $Y^u_v \in \Dangerous{+}$ where there is no set $Z \in \Tight{+}$ such that $Z \subseteq Y^u_v - u$. 
Since $X_u^v\subseteq Y_u^v$, we get that $X^u_v$ does not contain a set $Z \in \Tight{+}$.
\end{claimproof}

To compute whether $u$ is a safe source in $S$ or not, it then suffices, for all $v \in S-u$, to check whether $X^u_v \in \Tight{+}$ or whether $X^u_v \in \Dangerous{+}$ contains a set $Z \in \Tight{+}$, which can be done in polynomial time. By \cref{lem:safe_source_exists}, $S$ contains a safe source, thus there exists at least one vertex $s \in S$ that does not satisfy the conditions above which can be checked in polynomial time.
\end{proof}

It follows from the previous lemmas that \cref{alg:conn-augmentation-global} runs in polynomial time.

\section{Concluding remarks}

We generalized the results of Ito et al.~\cite{ito2022monotone} to hypergraphs and gave an algorithmic proof of \cref{KPP_hypergraph}. Several interesting questions remain open. We do not know if the $|V|^3$ bound on the number of hyperarc reorientations is tight. Generalizing this result to general directed hypergraphs with tail vertex sets and head vertex sets for each hyperarc would also provide a general characterization of $k$-hyperarc-connected orientable hypergraphs, which is not known to our knowledge.

\bibliographystyle{plainurl}
\bibliography{biblio}

\begin{thebibliography}{10}

\bibitem{bernath2008recent}
Attila Bern{\'a}th, Satoru Iwata, Tam{\'a}s Kir{\'a}ly, Zolt{\'a}n Kir{\'a}ly, and Zolt{\'a}n Szigeti.
\newblock Recent results on well-balanced orientations.
\newblock {\em Discrete Optimization}, 5(4):663--676, 2008.

\bibitem{edmonds1972theoretical}
Jack Edmonds and Richard~M Karp.
\newblock Theoretical improvements in algorithmic efficiency for network flow problems.
\newblock {\em Journal of the ACM (JACM)}, 19(2):248--264, 1972.

\bibitem{frank1980orientation}
Andr{\'a}s Frank.
\newblock On the orientation of graphs.
\newblock {\em Journal of Combinatorial Theory, Series B}, 28(3):251--261, 1980.

\bibitem{frank1982note}
Andr{\'a}s Frank.
\newblock A note on $k$-strongly connected orientations of an undirected graph.
\newblock {\em Discrete Mathematics}, 39(1):103--104, 1982.

\bibitem{frank2011connections}
Andr{\'a}s Frank.
\newblock {\em Connections in combinatorial optimization}, volume~38.
\newblock Oxford University Press Oxford, 2011.

\bibitem{frank2003orientation}
Andr{\'a}s Frank, Tam{\'a}s Kir{\'a}ly, and Zolt{\'a}n Kir{\'a}ly.
\newblock On the orientation of graphs and hypergraphs.
\newblock {\em Discrete Applied Mathematics}, 131(2):385--400, 2003.

\bibitem{gallo1993directed}
Giorgio Gallo, Giustino Longo, Stefano Pallottino, and Sang Nguyen.
\newblock Directed hypergraphs and applications.
\newblock {\em Discrete Applied Mathematics}, 42(2-3):177--201, 1993.

\bibitem{ito2022monotone}
Takehiro Ito, Yuni Iwamasa, Naonori Kakimura, Naoyuki Kamiyama, Yusuke Kobayashi, Shun-ichi Maezawa, Yuta Nozaki, Yoshio Okamoto, and Kenta Ozeki.
\newblock Monotone edge flips to an orientation of maximum edge-connectivity {\`a} la {N}ash-{W}illiams.
\newblock {\em ACM Transactions on Algorithms (TALG)}, 19(1), 2023.

\bibitem{lovasz1973connectivity}
L{\'a}szl{\'o} Lov{\'a}sz.
\newblock Connectivity in digraphs.
\newblock {\em Journal of Combinatorial Theory, Series B}, 15(2):174--177, 1973.

\bibitem{lovasz1979combinatorial}
L{\'a}szl{\'o} Lov{\'a}sz.
\newblock {\em Combinatorial problems and exercises}.
\newblock North Holland, 1979.

\bibitem{nash1960orientations}
Crispin St. J.~A. Nash-Williams.
\newblock On orientations, connectivity and odd-vertex-pairings in finite graphs.
\newblock {\em Canadian Journal of Mathematics}, 12:555--567, 1960.

\bibitem{szigeti2010orientations}
Zolt{\'a}n Szigeti.
\newblock Orientations of graphs.
\newblock {\em Matem{\'a}tica Contempor{\^a}nea}, 39:179--188, 2010.

\end{thebibliography}

\end{document}